 \newtheorem{thm}{Theorem}[section]
 \newtheorem{cor}[thm]{Corollary}
 \newtheorem{lem}[thm]{Lemma}
 \newtheorem{prop}[thm]{Proposition}
 \theoremstyle{definition}
 \newtheorem{defn}[thm]{Definition}
 \theoremstyle{remark}
 \newtheorem{rem}{Remark}
 \newtheorem{ex}{Example}
 \newtheorem*{exs*}{Examples}
 \numberwithin{equation}{section}
\DeclareMathAlphabet{\mathpzc}{OT1}{pzc}{m}{it}
\newcommand{\ka}{{\mathpzc k}}
\newcommand{\kad}{{\mathpzc K}}
\newcommand{\isuma}{\dot{\sum_{i\in \mc N}}}
\newcommand{\isumap}{\dot{\sum_{i\in \mc M}}}
\newcommand{\rlac}{\rla^{\textnormal{\hspace{0.02cm}\tiny co}}}
\newcommand{\rlace}{R_{\lam,\eps}^{\textnormal{\hspace{0.02cm}\tiny co}}}
\newcommand{\rmic}{\rmi^{\textnormal{\hspace{0.02cm}\tiny co}}}
\newcommand{\rladu}{\rla^{\textnormal{\hspace{0.02cm}\tiny du}}}
\newcommand{\du}{\textnormal{\hspace{0.02cm}\tiny du}}
\newcommand{\co}{\textnormal{\hspace{0.02cm}\tiny co}}
\newcommand{\rladue}{R_{\lam,\eps}^{\textnormal{\hspace{0.02cm}\tiny du}}}
\newcommand{\rmidu}{\rmi^{\textnormal{\hspace{0.02cm}\tiny du}}}
\newcommand{\aco}{A^{\textnormal{\hspace{0.02cm}\tiny co}}}
\newcommand{\macoe}{\mathcal A_\eps ^{\textnormal{\hspace{0.02cm}\tiny co}}}
\newcommand{\suu}{S_{\textnormal{\hspace{0.02cm}\tiny u}}}
\DeclareMathOperator{\Ker}{Ker}
\DeclareMathOperator{\wyr}{Det}
\begin{document}

\title[Concatenation of Feller processes]{Concatenation of dishonest Feller processes, exit laws, and limit theorems on graphs.} 

\author[A. Bobrowski]{Adam Bobrowski}

\address{
Lublin University of Technology\\
Nadbystrzycka 38A\\
20-618 Lublin, Poland}

\email{a.bobrowski@pollub.pl}








\newcommand{\cxi}{(\xi_i)_{i\in \N} }
\newcommand{\lam}{\lambda}
\newcommand{\eps}{\varepsilon}
\newcommand{\ud}{\, \mathrm{d}\hspace{0.02cm}}
\newcommand{\mud}{\mathrm{d}}
\newcommand{\pr}{\mathbb{P}}
\newcommand{\f}{\mathcal{F}}
\newcommand{\s}{\mathcal{S}}
\newcommand{\h}{\mathcal{H}}
\newcommand{\ai}{\mathcal{I}}
\newcommand{\R}{\mathbb{R}}
\newcommand{\C}{\mathbb{C}}
\newcommand{\Z}{\mathbb{Z}}
\newcommand{\N}{\mathbb{N}}
\newcommand{\Y}{\mathbb{Y}}
\newcommand{\e}{\mathrm {e}}
\newcommand{\tif}{\widetilde {f}}
\newcommand{\Id}{{\mathrm{Id}}}
\newcommand{\cic}{C_{\mathrm{mp}}}
\newcommand{\cod}{C_{\mathrm{odd}}[0,1]}
\newcommand{\cev}{C_{\mathrm{even}}[0,1]}
\newcommand{\cevr}{C_{\mathrm{even}}(\mathbb{R})}
\newcommand{\codr}{C_{\mathrm{odd}}(\mathbb{R})}
\newcommand{\cez}{C_0(0,1]}
\newcommand{\fod}{f_{\mathrm{odd}}} 
\newcommand{\fev}{f_{\mathrm{even}}} 
\newcommand{\sem}[1]{\mbox{$\left (\e^{t{#1}}\right )_{t \ge 0}$}}
\newcommand{\semi}[1]{\mbox{$\left ({#1}\right )_{t > 0}$}}
\newcommand{\semt}[2]{\mbox{$\left (\e^{t{#1}} \otimes_\varepsilon \e^{t{#2}} \right )_{t \ge 0}$}}
\newcommand{\tr}{\textcolor{red}}
\newcommand{\cea}{C_A}
\newcommand{\ceat}{C_A(t)}
\newcommand{\cosinea}{(\ceat )_{t\in \R}}  
\newcommand{\sea}{S_A}
\newcommand{\seat}{S_A(t)}
\newcommand{\sema}{(\seat )_{t\ge 0}}
\newcommand{\wt}{\widetilde}
\renewcommand{\iff}{if and only if }
\renewcommand{\k}{\mathrm{k}}
\newcommand{\tcm}{\textcolor{magenta}}
\newcommand{\ecm}{\textcolor{olive}}
\newcommand{\tcb}{\textcolor{blue}}
\newcommand{\dx}{\ \textrm {d} x}
\newcommand{\dy}{\ \textrm {d} y}
\newcommand{\dz}{\ \textrm {d} z}
\newcommand{\di}{\textrm{d}}
\newcommand{\tcg}{\textcolor{green}}
\newcommand{\lc}{\mathfrak L_c}
\newcommand{\ls}{\mathfrak L_s}
\newcommand{\grat}{\lim_{t\to \infty}}
\newcommand{\grar}{\lim_{r\to 1-}}
\newcommand{\graR}{\lim_{R\to 1+}}
\newcommand{\grak}{\lim_{\kappa \to \infty}}
\newcommand{\gra}{\lim_{n\to \infty}}
\newcommand{\grae}{\lim_{\eps \to 0}}
\newcommand{\rez}[1]{\left (\lam - #1\right)^{-1}}
\newcommand{\papa}{\hfill $\square$}
\newcommand{\papap}{\end{proof}}
\newcommand {\x}{\mathbb{X}}
\newcommand{\aex}{A_{\mathrm ex}}
\newcommand{\jcg}[1]{\left ( #1 \right )_{n\ge 1} }
\newcommand {\y}{\mathbb{Y}}
\newcommand{\injtp}{\x \hat \otimes_{\varepsilon} \y}
\newcommand{\pin}{\|_{\varepsilon}}
\newcommand{\mc}{\mathcal}
\newcommand{\inter}{\left [0, 1\right ]}
\newcommand{\lir}{\lim_{r \to 1}}
\newcommand{\ha}{\mathfrak {H}}
\newcommand{\dom}[1]{D(#1)}
\newcommand{\mquad}[1]{\quad\text{#1}\quad}
\newcommand{\lil}{\lim_{\lam \to \infty}}
\newcommand{\lilz}{\lim_{\lam \to 0+}}
\newcommand{\della}{\mathcal L_\lam^A} 
\newcommand{\dell}{\mathcal L_\lam}
\newcommand{\mem}{\mathfrak {p}}
\newcommand{\sdlam}{\sqrt{2\lam}}
\newcommand{\slam}{\sdlam} 
\newcommand{\ela}{e_\lam} 
\makeatletter
\newcommand{\normt}{\@ifstar\@normts\@normt}
\newcommand{\@normts}[1]{%
  \left|\mkern-1.5mu\left|\mkern-1.5mu\left|
   #1
  \right|\mkern-1.5mu\right|\mkern-1.5mu\right|
}
\newcommand{\@normt}[2][]{%
  \mathopen{#1|\mkern-1.5mu#1|\mkern-1.5mu#1|}
  #2
  \mathclose{#1|\mkern-1.5mu#1|\mkern-1.5mu#1|}
}
\makeatother

\thanks{Version of \today}
\subjclass{92C42, 35B25, 35F46, 35K57, 35K51, 47D06, 47D07}
 \keywords{exit laws, processes on metric graphs, concatenation of processes, non-local boundary and transmission conditions, Feller processes, snapping out and skew Brownian motions}

\begin{abstract}We provide a rather explicit formula for the resolvent of a~concatenation of $N$ processes in terms of their exit laws and certain probability measures characterizing the way the processes are concatenated. As an application, we prove an averaging principle saying that by concatenating asymptotically splittable processes one can approximate Markov chains. \end{abstract}

\maketitle

\newcommand{\oper}{\mathfrak R_r}
\newcommand{\opern}{\mathfrak R_{\rn}^\mho}
\newcommand{\brn}{\mbox{$\Delta^\mho_{\rn}$}}
\newcommand{\bro}{\mbox{$\Delta_{\rn}$}}
\newcommand{\rn}{r}
\newcommand{\cern}{C\hspace{-0.07cm}\left[\rn, 1\right ]}
\newcommand{\cernbez}{C\left[\rn, 1\right ]}
\newcommand{\cep}{C\hspace{-0.07cm}\left[ 0, 1\right ]}
\newcommand{\copi}{C[0,\pi]}
\newcommand{\CP}{\mbox{$C_p[0,2\pi]$}}
\newcommand{\cerec}{C\hspace{-0.07cm} \left ([0,\pi]\times [r,1]\right)}
\newcommand{\cerecbez}{C \left ([0,\pi]\times [r,1]\right)}
\newcommand{\cerecdwa}{C^2\hspace{-0.07cm} \left ([0,\pi]\times [r,1]\right)}
\newcommand{\cerecj}{C\hspace{-0.07cm} \left ([0,\pi]\times \left [0 ,1\right ]\right)}
\newcommand{\xprim}{C_\theta (UR)}
\newcommand{\ie}{i.e., }
\newcommand{\rla}{R_\lambda}
\newcommand{\grubex}{\mathbb X}
\newcommand{\Jcg}[1]{\left ( #1 \right )_{i=1,\dots,N}} 
\newcommand{\LB}{\mbox{$\Delta_{\text{{\tiny\textsf LB}}}$}}
\newcommand{\rod}[1]{\mbox{$\left (#1 \right )_{t\ge 0}$}} 

\section{Introduction} 
\newcommand{\cosa}{\mbox{$C(S_A)$}}
\newcommand{\cosb}{\mbox{$C(S_B)$}}
\newcommand{\coss}{\mbox{$C(S)$}}
\newcommand{\cosu}{\mbox{$C(\suu)$}}
\newcommand{\cosf}{\mbox{$C^\flat (S)$}}

\newcommand{\cosi}{\mbox{$C(S_i)$}}
\newcommand{\rlaa}{\rla^A}
\newcommand{\rlab}{\rla^B}
\newcommand{\rlag}{\rla^G}
\newcommand{\rlai}{R_{\lam,i}}
\newcommand{\rlamc }{\rla^{\mc A}}
\newcommand{\rmi}{R_\mu}  
\newcommand{\rmii}{R_{\mu,i}}
\subsection{Not honest stochastic processes as building blocks of more complex systems} 

In modeling natural phenomena one often encounters stochastic processes that are not honest, that is, are undefined after a random time. Such processes include e.g. so-called explosive Markov chains (and particular cases of birth and death processes among them) \cites{fmc,chungboundary,fellerboundaryforMC,norris}, Feller processes killed upon exiting a certain region \cite{chungodkb}, processes generated by fractional Laplace operators perturbed by gradients (see \cite{bogdanjakub}*{Eq. (20)}), and killed L\'evy processes (\cite{bogdanki}*{Section 5.5} and \cite{bogdanros}*{Section 4.5}).

In biology, they abound for example in the theory of coagulation and fragmentation processes  \cite{banaarlo,bll}, and form building blocks of models involving semi-permeable membranes, such as direct or facilitated diffusion 
of ions and molecules across cell/plasma membranes \cite{steinbook,khan}. To wit, a randomly moving particle that passes through a membrane can be seen as disappearing  from one part of the state-space and reappearing in its other part, and it is natural to see the entire process as built from simpler, not honest ones, defined only in parts of the state-space --- see below for more on this subject. 
Such a structure of the studied process is visible e.g. in the papers \cite{bobmor} and \cite{zmarkusem} (see also \cite{knigaz}) devoted to  reconciliation of two seemingly different models of so-called fast neurotransmitters, to describing activity of kinases, and to intracellular calcium dynamics.  It is also apparent, to name a few, (a) in  the work of J. E. Tanner \cite{tanner}, who studied diffusion of particles through a sequence of permeable barriers (see also Powles et al. \cite{powles}, for a continuation of the subject), (b) in the paper by S. S. Andrews \cite{andrews}, devoted to describing absorption and desorption phenomena, and (c) in Fireman et al. \cite{fireman}, where a compartment model with permeable walls (representing e.g., cells, and axons in the white matter of the brain in particular) is analyzed. 
See also the references given   by A. Lejay in \cite{lejayn} to articles devoted to modeling  of flows between cells, and the papers with models relevant for astrophysics, ecology, homogenization, geophysics and finance, provided in \cite{lejayskew}*{p. 414}.

In fact, it is the abundance of non-honest processes in the natural sciences that is a primary motivation of this work.  
As noted e.g. in Hartwell et al. \cite{hartwell} (see also \cite{alon}), biological systems are built of myriads of interacting components which can in themselves be decomposed into yet smaller subsystems with very specific interactions. 
The idea of \emph{concatenation} of not honest processes, that is, of building more complex structures from simpler ones, provides a unifying language for a wide range of applied models, and we believe that our main formula (see \eqref{concp}) is an efficient tool for analyzing them.

\subsection{Non-honest Feller processes and boundary conditions}


A number of examples of not honest processes can be found in the theory of Feller processes on domains with boundaries --- their behavior at these boundaries is customarily described by means of boundary conditions. For a simple concrete case, consider a standard Brownian motion on the right half-axis $\R^+\coloneqq [0,\infty)$ which after reaching the boundary point $x=0$ remains there for an exponential time with parameter, say, $a \ge 0$, and is undefined later, when --- figuratively speaking --- the Brownian traveller disappears from the state-space. This process, that will be referred to as \emph{elementary exit} (comp. \cite{fellera1}*{p. 3} or \cite{knigaz}*{p. 19}) is related to the boundary condition
\begin{equation}\label{intron:1} af''(0) +  f(0)=0. \end{equation}
The Robin boundary condition 
\begin{equation}\label{intron:2} bf'(0) = f(0), \end{equation}
where $b> 0$ describes a similar process, termed \emph{elastic Brownian motion}: initially it behaves precisely as the reflected Brownian motion but the time it spends `at the boundary' where $x=0$ is measured with the help of the celebrated L\'evy local time; when an exponential time with parameter $b^{-1}$ with respect to the L\'evy local time elapses, the process is no longer defined \cite{itop,ito,karatzas}.

The role of the \emph{sticky}, or \emph{slowly reflecting} boundary 
\begin{equation}\label{intron:3} af''(0) - bf'(0)+ cf(0)=0, \end{equation}
is similar --- see \cite{kostrykin2010brownian}, \cite{liggett}*{p. 127} or \cite{revuz}*{p. 421} --- the only difference lies in the fact that here the process has the tendency to stick to the boundary for a longer time than in the reflected Brownian motion.

\subsection{Concatenating processes that are not honest; transmission conditions}
In describing such processes it is one of the tricks of the trade to make them honest by adjoining an additional point (sometimes called the cemetery or coffin state) to the state-space and agreeing that from the moment when the process is undefined, it actually stays at this new point forever \cites{kniga,bass,kallenbergnew,fmc}. There are, however, other, more engaging ways the process may be {continued}, that is, extended to a possibly honest process. Interestingly, such a continuation can often be obtained by appropriate manipulations on boundary conditions. 

For example, in the case of elementary jump from the boundary at $x=0$, we can require that when the exponential time spent at $x=0$ elapses the process should start anew at a point $s $ of a locally compact space $S$, and evolve according to the rules governing there. To this end, one imposes the transmission condition
\begin{equation}\label{intron:4} f'' (0) = a [f(s) - f(0)]. \end{equation}
A similar modification  of the elastic Brownian motion results from changing the boundary condition \eqref{intron:2} to 
the transmission condition \begin{equation}\label{intron:5} bf' (0) = f(0) - f(s), \end{equation}
see \cites{bobmor,nagrafach,lejayn}, consult also \cite{knigaz}*{p. 66} and \cite{zmarkusem}*{p. 669}, where further references are given. 

For a more complex example we refer to \cite{arkuk} (see also \cite{arkuk2}), where a boundary condition of the form 
\[ f(x) = \int_\Omega f \ud \mu_x \] 
commands the process that leaves the state-space $\Omega$ at a point $x$ of its boundary to start anew inside the state-space at a randomly chosen point with distribution $\mu_x$. 

Although processes related to transmission conditions discussed above are motivating examples for our paper, the idea that the process which is no longer defined in its original state-space could be continued in an extended state-space, applies, of course, to more general situations than these described above, and to processes that may have nothing to do with transmission or boundary conditions. In fact, such constructions are known under the name of \emph{concatenation of processes}. 

In Section II.14 of Sharpe's monograph \cite{sharpe} (pp. 77-84) two processes, say, $X^1$ and $X^2$, the first of which is not honest, are concatenated as follows. A particle starting in the state-space of $X^1$ moves about according to the law of evolution of $X^1$, but when the lifetime of this process is over, the particle jumps to the state-space of $X^2$ and starts to move according to the law of $X^2$. A particular example of such concatenation is described in the classical treatise of Ito and McKean \cite{ito}*{p. 105} in intuitive probabilistic terms and by means of transmission conditions. 

In the recent paper of F. Werner \cite{werner} a more general construction is provided: given a sequence of Markov processes $X^n, n \ge 1 $ their concatenation is defined as follows: the new process starts at the state-space of $X^1$, behaves like this process for its lifetime, and then starts anew as $X^2$; when the lifetime of $X^2$ is over, the process starts as $X^3$, and so on, until it possibly reaches its coffin state. 
\begin{center}
\begin{figure}
\includegraphics[scale=1]{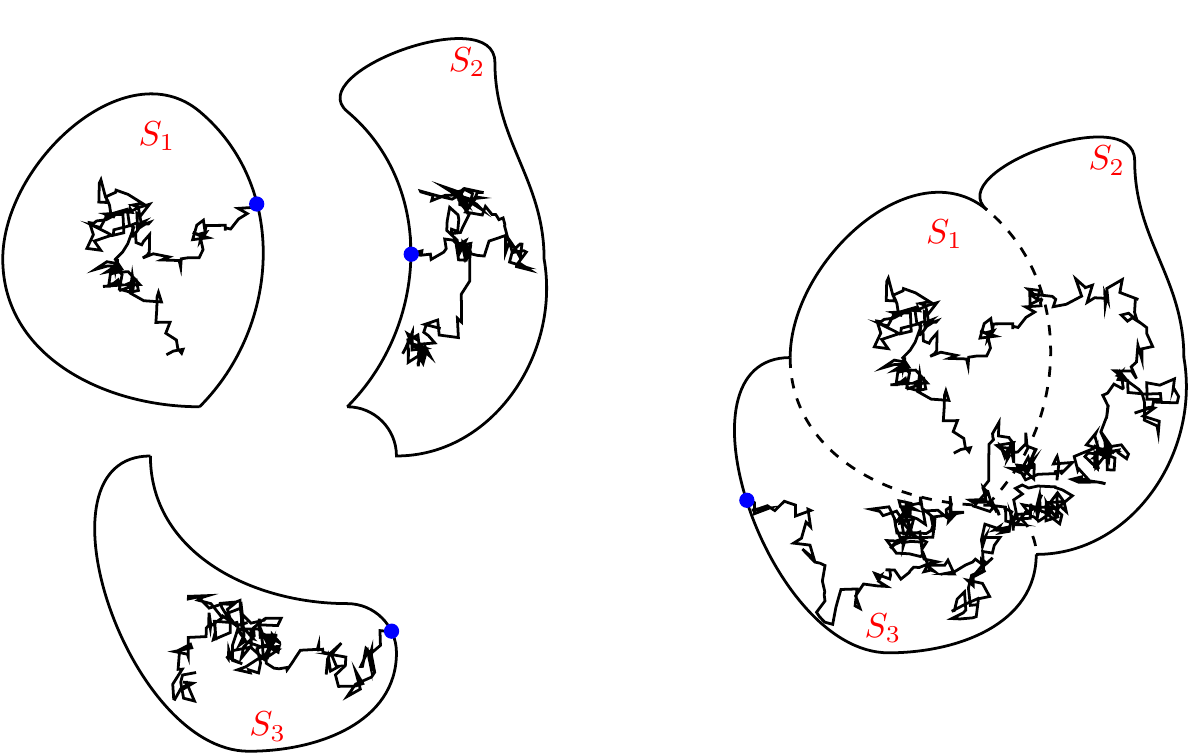}\caption{Processes before (on the left) and after  concatenation (on the right).}
\label{figurka}
\end{figure}
\end{center}

\vspace{-0.7cm}
Our general concatenation theorem, that is, Theorem \ref{thm5}, is devoted to the case where there is a finite number $N\ge 2$ of processes with values in 
disjoint compact topological spaces $S_i, i \in \{1,\dots,N\}$, and some of them are not honest. Those processes that are not honest have a finite, possibly greater than $1$, number of possible ways of exiting from their state-spaces (think, in particular, of a diffusion process on a finite interval which may exit from this interval by either of its end-points, see Section \ref{examples} for more examples). The extended process we construct in the state-space
\[ \suu \coloneqq \dot{\bigcup_{i=1,\dots, N}} S_i \]
(the dot above the union stresses the fact that $S_i$ are disjoint, `u' for `union') intuitively looks as follows (see Figure \ref{figurka}): when started at an $x \in S_i$ the process is initially identical to the original process defined in $S_i$; when the latter is no longer defined,
the extended process chooses a random point in $S$, the distribution of its new position depending both on the space it came from and the way the process has left it. Starting from this new position, the process forgets its past and, if its new position happens to be in $S_j$, behaves according to the rules governing the original process defined in $S_j$. Again, after a random time the original process can be undefined in $S_j$; then the extended process chooses a random point in $S$ and the choice once more depends on $j$ and on the way $S_j$ was exited, but not on the previous history, and so on.

There is a number of differences between our approach and that presented in \cite{werner}; for example, from the technical point of view, F. Werner works with general strong Markov processes whereas we restrict ourselves to Feller processes.  The most significant of differences, however, is that in \cite{werner} the order in which the processes are concatenated is deterministic: after behaving like $X^n$ the concatenated process starts anew as $X^{n+1}$. In our case, the order is stochastic: after exiting from $S_i$ the process chooses the point from which it starts anew randomly; moreover, its new position depends on the way it exited from $S_i$ --- this mechanism is described by Feller exit boundary for the process governing evolution in $S_i$ (see Section \ref{elae}).  

A very similar situation is studied in \cite{kostrykin2012}*{Section IV}, where two metric graphs are joined, and a Brownian motion on this larger graph is constructed by means of Brownian motions constructed beforehand on its components: a particle can filter from one subgraph to the other via so-called shadow vertices. It is clear that our approach fits such constructions better than that in \cite{werner}.


\subsection{The main idea}  

It should be stressed that, in the cases where there are many exits and many processes to concatenate, differential operators with domains described by transmission conditions of the type \eqref{intron:4}--\eqref{intron:5} are difficult to handle, and more often than not, one needs to develop special, frequently involved techniques to obtain appropriate generation theorems for the related stochastic processes. This was the case with generation theorems for diffusion processes on graphs \cites{bobmor,nagrafach,gregosiewicz,binz,gregosiewiczsem,gregnew,banfal2,marjeta} and in thin layers \cites{zmarkusem,zmarkusem2b,3Dlayers,sima}. 
The core of the problem was the fact that in these situations it is a priori unclear whether the so-called range condition is satisfied ---  the other two conditions characterizing generators (and pre-generators) of Feller semigroups are usually easy to check.

The main idea of the present paper is that in dealing with the problem of concatenation of processes that are not honest, the limitations of the previous approaches and technical problems which we face when applying them, disappear, if instead of working with generators we use \emph{resolvents} (see our Section \ref{pofs} for an explanation of these notions) and Laplace transforms of \emph{exit laws}. Namely, we show that in the rather general context described above, whether concatenation involves a change of boundary conditions or the way the involved generators act, the resolvent of concatenation of processes has the same, quite explicit form --- see \eqref{conc}--\eqref{concp}. Remarkably, all terms in the formula for the resolvent have a clear probabilistic interpretation, whereas the analytic methods used in the papers cited above seem to be unrelated to any stochastic intuitions.  It should be stressed here that this unification and simplification of the previous approaches is quite impossible  without the notion of exit law (see Section \ref{must}).  
 
Put otherwise: our formula for the resolvent of a concatenated process provides a simple way of checking the range condition for a candidate for a generator; in fact, it yields the form of the solution to the resolvent equation. Moreover, it is an interesting feature of formula \eqref{conc}--\eqref{concp} that it allows inferring properties of concatenation of processes from the properties of its constituents.

For example, the fact that the resolvent of the process that is obtained as a concatenation of other  processes has a quite explicit and manageable form, provides an efficient way to deal with convergence problems involving processes on graphs or similar structures, like those studied in \cite{bobmor,nagrafach,knigaz,banfal2,zmarkusem,gregosiewicz,gregosiewiczsem,gregnew}. 

To explain, we recall that the classical Trotter--Kato--Neveu theorem \cite{kniga,knigaz,ethier,goldstein,pazy} states that a sequence of semigroups converges to a limit semigroup iff their resolvents converge to the resolvent of the limit semigroup --- and this is equivalent to convergence in distribution of the corresponding processes (see \cite{kallenbergnew}*{p. 385}).  Since generation theorems obtained in \cites{bobmor,nagrafach,gregosiewicz,gregosiewiczsem,gregnew,banfal2,marjeta,binz} almost never provide explicit formulae for the resolvent, direct use the Trotter--Kato--Neveu theorem was so far impossible. Instead, quite involved manipulations on generators, based for example on the Sova--Kurtz version  of the approximation theorem \cite{sova,ethier,kniga,knigaz,kurtz} or asymptotic analysis (see e.g. \cite{banfal1}) had to be performed. As exemplified in our Section \ref{aap}, formula \eqref{conc} allows us to use the original version of the Trotter--Kato--Theorem, to prove a class of known convergence theorems with greater ease and to obtain new theorems of this type (see  further down for more details). 

\subsection{Structure of the paper} 
In Section \ref{pofs} we recall basic facts from the theory of Feller semigroups. Next, in Section \ref{owc}, we study the notion of exit laws. In the following Section \ref{elae}, a link between exit laws and excessive functions is recalled, and a definition of regular Feller boundary is given. The main Section \ref{cbnr} contains our master theorem (Theorem \ref{thm5}) saying that any finite number of Feller processes with regular Feller boundary can be concatenated by means of probability measures describing distributions of positions of points at which the process starts anew after exiting from one of the components of the state-space. Formula \eqref{concp} (or its more formal, but equivalent version \eqref{concb}) provides a quite explicit form of the resolvent of such a concatenated process. 

Section \ref{examples} contains examples of Feller processes with regular Feller bound\-ary, that is, of processes that can be concatenated within the theory presented in this paper. Notably, we provide exact formulae for their excessive functions and exit laws. These processes can be in particular used as building blocks in the theorem on fast processes on graphs we discuss in Section \ref{aap}. 

In Section \ref{asgt}, by presenting a sample generation theorem, we provide a link between the approach via resolvents developed in this paper, and the approach via generators, used before. 

The concluding Section \ref{aap} is devoted to a limit theorem, an averaging principle, describing in particular fast processes on graphs (see Section \ref{aasa} for more information); it is preceded by Section \ref{s:cos} where the theory of convergence of semigroups is briefly recalled.  In the Appendix (Section \ref{app}) supplementary material is collected.
 
\subsection{An averaging principle}\label{aasa}
As already mentioned, as a sample application of our general concatenation  theorem (Theorem \ref{thm5}), in Section \ref{aap},  we revisit the question of fast diffusions on graphs. To recall, in \cite{nagrafach}, following the previous work devoted to reconciling two models of evolution of neurotransmitters \cite{bobmor}, the following theorem was 
proved.

Consider a diffusion process on a graph $G$ without loops, and assume that each of its vertices is a semi-permeable membrane through which particles diffusing on the graph's edges can filter. Permeability of the membrane depends on the edge from which the particles filter and on the edge to which they filter; permeability differs also in two possible vertices by which the edges are connected. Now, assume that diffusion on each edge is accelerated and at the same time permeability of the membranes situated at vertices is lowered so that the fluxes through each of them remain constant.  In the limit, as diffusion is faster and faster, by averaging property of diffusion, points in each edge become indistinguishable and so each edge collapses to a single point, and the entire graph becomes a collection of disjoint points. Since the fluxes remain constant, however, these points communicate: the limit process is a Markov chain on the state-space composed of points formed from former edges. Intensities of jumps in the limit Markov chain are functions of fluxes of the approximating diffusions and thus functions of permeability coefficients.

There is a couple of other versions, extensions and generalizations of this result --- see \cite{banfal1,banfal2,zmarkusem,gregosiewicz,gregosiewiczsem,gregnew} where also a spectrum of applications to models of applied mathematics is discussed. In particular, Banasiak et al. \cite{banfal1} (see also \cite{bobmor}) expound the fact  that in biological systems often some interactions occur at a much faster time scale than other, and thus justify  the need for theorems of this type.

To explain the idea of the generalization we present in Section \ref{aap}, we consider the space of continuous functions on the closed interval $[0,r]$ and the operator $Af=\frac 12 f''$ with Feller--Wentzel boundary conditions (see \eqref{exa2:1}) --- processes generated by such operators, when concatenated, describe diffusions on graphs. If $c$ and $d$ in \eqref{exa2:1} are replaced by $\eps c$ and $\eps d$, respectively, and $A$ is replaced by $\eps^{-1}A$, in the limit as $\eps \to 0$ we obtain a process with the state-space composed of a single point (all points of the interval $[0,r]$ are lumped together). This limit process is described by two independent exponential random variables, say, $T_1$ and $T_2$. After $T\coloneqq \min (T_1,T_2)$  the process leaves its state-space and is no longer defined, if $T=T_1$ the process leaves through the trace of the left end of the interval, if $T=T_2$ it leaves through its right end --- see Section \ref{exa2} and Example \ref{ex:5} for details.

Such a limit property is shared by a large class of Feller processes, except that the number of exponential variables can be different that $2$ --- we single out such processes in Definition \ref{asik}. For instance, in the Walsh-type Skew Brownian motion on a star-like graph discussed in Section \ref{exa3} the number of variables coincides with the number of edges, and in a variety of other examples this number is $1$. Our averaging principle of Section \ref{aap} (Theorem \ref{thm:ap}) says that such processes, when concatenated, in the limit as $\eps \to 0$ converge to Markov chains.

\section{Preliminaries on Feller semigroups and resolvents}\label{pofs}  
\newcommand{\stare}{To recall (see e.g. \cites{bass,kniga,kallenbergnew}): if $S$ is a locally compact metrizable and separable  space, and $\coss$ is the space of continuous functions $f$ on $S$ that vanish at infinity (\ie for any $f\in \coss$ and $\eps >0$ there is a compact set $K$ such that $|f(x)|< \eps $ for $x \in S\setminus K$), then a Feller semigroup  
is a strongly continuous family of positive contraction operators $\rod{T(t)}$  in $\coss$ such that $T(0)=I_{C_0(S)}$ (the identity operator) and 
\[ T(t)T(s) = T(t+s) \qquad s,t \ge 0.\]
Because of the Riesz representation theorem, for each $x\in S $ and $t\ge 0$ there is a Borel measure $m_{x,t}$ on $S$ such that $m_{x,t} (S)\le 1$ and 
\begin{equation}T(t)f (x) = \int_S f(y) \ud m_{x,t}, \qquad f \in \coss \label{intro:1} \end{equation}
if $m_{x,t}(S) = 1$ for all $x$ and $t$, the process is said to be conservative.} 

To recall (see e.g. \cites{bass,kniga,kallenbergnew}): if $S$ is a compact, metrizable and separable  space, and $\coss$ is the space of continuous functions on $S$, then a Feller semigroup  
is a strongly continuous family of positive contraction operators $\rod{T(t)}$  in $\coss$ such that $T(0)=I_{C(S)}$ (the identity operator) and 
\[ T(t)T(s) = T(t+s) \qquad s,t \ge 0.\]
Because of the Riesz representation theorem, for each $x\in S $ and $t\ge 0$ there is a Borel measure $\textsf m_{x,t}$ on $S$ such that $\textsf m_{x,t} (S)\le 1$ and 
\begin{equation}T(t)f (x) = \int_S f \ud \textsf m_{x,t}, \qquad f \in \coss; \label{intro:1} \end{equation}
if $\textsf m_{x,t}(S) = 1$ for all $x$ and $t$, the process is said to be conservative or honest.

It is well-known that with each Feller semigroup one can associate a Markov  process \rod{X(t)} with c\`adl\`ag paths \cite{bass,kallenbergnew} (\ie paths that are right-continuous and possess left limits) in such a way that 
\begin{equation} \label{intro:2} T(t)f(x) = E_x f(X(t))1_{\{t\le \tau\}} , \qquad  f\in \coss, t\ge 0, x\in S\end{equation}
where $E_x$ denotes expectation conditional on $X(0)=x$ and $\tau$ is the lifetime of the process, that is, $\tau=\tau (\omega)$ is the random time up to which the path $X(t,\omega)$ is defined. (Comparing Eqs. \eqref{intro:1} and \eqref{intro:2} we see that $\textsf m_{x,t}(\Gamma), \Gamma \subset S$ should be interpreted as the probability that the related process starting at $x$ at time $0$ will be in $\Gamma$ at a time $t\ge 0$.)

Feller semigroups are conveniently described by their generators. The generator $A$ of a Feller semigroup is defined by 
\[ Af = \lim_{t\to 0} t^{-1} (T(t)f - f) \]
on the domain $\dom{A}$ composed of $f$ such that the above strong limit exists (that is, the right-hand side converges to $Af$ uniformly on $S$). It is well-known that the generator $A$ characterizes the semigroup uniquely (in particular, different semigroups have different generators) and that an operator $A$ is a generator of a Feller semigroup (shortly: a Feller generator) in $C(S)$ iff the following three conditions are met 
\begin{itemize}
\item [1. ] $A$ is densely defined, 
\item [2. ] $A$ satisfies the positive maximum principle,
\item [3. ] $A$ satisfies the range condition: for any $g \in C(S)$ and $\lam > 0$ there is an $f \in \dom{A}$ such that $\lam f - Af = g.$ \end{itemize} 
The semigroup generated by $A$ will in what follows be denoted $\sem{A}.$

In defining Feller semigroups it is customary to require also that the related process is honest or conservative (which comes down to the requirement that $1_S \in \dom{A}$ and $A1_S= 0$) but in this paper, for obvious reasons, we will allow the process to be dishonest. It is easy to see that the process is honest iff $T(t)1_S = 1_S$ for all $t\ge 0$ and this holds iff $\tau$ of equation \eqref{intro:1}, that is, the lifetime of the process, is a.s. equal to $\infty$.

An alternative description of a Feller semigroup is provided via Feller resolvents. A family $\rla, \lam >0 $ of non-negative operators in $\coss$ is said to be a Feller resolvent if the following conditions are met:  
\begin{itemize}
\item [1. ] the Hilbert equation holds:
\begin{equation}\label{owc:3} (\lam- \mu) \rmi \rla = \rmi - \rla, \qquad \lam,\mu >0, \end{equation}
\item [2. ] for each $f \in \coss$, $\lil \lam \rla f=f,$ 
\item [3. ] $\lam \rla 1_S \le 1_S$ for all $\lam >0$.
\end{itemize} 

It may be argued (see the already cited references) that for each Feller resolvent there is a Feller generator $A$ such that $\rla = \rez{A}$, and thus also the related Feller semigroup. Moreover, the related process is conservative iff $\lam \rla 1_S = 1_S$ for all $\lam >0$.

A word about terminology: a family $\rla , \lam >0$ of bounded operators is said to be a pseudoresolvent, if it satisfies the Hilbert equation. This family is said to be regular, if additionally condition 2. given above holds. 




\section{Exit laws} \label{owc}

Let $S$, as above, be a compact, metrizable and separable  space, and let $A$ be the generator of a Feller semigroup in $C(S)$ with resolvent $\rla, \lam >0$. 
By definition,  $\ell_\lam, \lam >0$ is the \emph{Laplace transform of an exit law} for $A$  iff (compare \cite{neveu62}*{p. 324}, \cite{fellerboundaryforMC}*{p. 538}, or \cite{fmc}*{Section 3.6.10}, cf. also the section on \emph{exit systems} in \cite{blum}; condition (c), below, bears remarkable resemblance to the characteristic equation in \emph{empathy theory} of N. Sauer -- see e.g. \cite{sauer}*{eq. (7)})  
\begin{itemize}
\item [(a) ] $(0,\infty)\ni \lam \mapsto \ell_\lam \in \coss$ is nontrivial (\ie $\ell_\lam \not = 0$ for at least one $\lam $), locally  bounded and non-negative, 
\item [(b) ]   $\lilz \ell_\lam (x)\le 1$ for each $x \in S$, and
\item [(c) ] we have 
\begin{equation}\label{owc:2} 
(\lam - \mu) \rla \ell_\mu = \ell_\mu - \ell_\lam , \qquad \lam,\mu >0.\end{equation}
\end{itemize}

In this section we collect basic properties of such objects. In what follows, they will, for simplicity, often be called `\emph{exit laws}'.

\begin{prop}\label{prop1} Each  locally bounded function $(0,\infty) \ni \lam \mapsto \ell_\lam \in \coss $ satisfying \eqref{owc:2} is infinitely differentiable with \begin{equation} \ell_\lam^{(n)} = (-1)^n n! (\rla)^n \ell_\lam , \qquad n \ge 1\label{owc:d} \end{equation}
 \end{prop}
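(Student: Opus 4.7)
The plan is to fix an arbitrary $\mu>0$ and expand $\lambda\mapsto\ell_\lambda$ as a norm-convergent power series about $\mu$; the derivatives will then come straight from the coefficients. First I would rewrite the functional equation \eqref{owc:2} in the form
\[
\ell_\lambda = \ell_\mu + (\mu-\lambda)\, R_\lambda \ell_\mu,
\]
so that the task reduces to expanding the $C(S)$-valued function $\lambda\mapsto R_\lambda \ell_\mu$ near $\mu$.

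Next I would iterate the Hilbert equation \eqref{owc:3} to obtain, for any $n\ge 1$, the identity
\[
R_\lambda = \sum_{k=0}^{n-1}(\mu-\lambda)^k R_\mu^{k+1} + (\mu-\lambda)^n R_\lambda R_\mu^n.
\]
Since $\|R_\lambda\|\le 1/\lambda$ for a Feller resolvent, the remainder term tends to zero in operator norm as $n\to\infty$ whenever $|\mu-\lambda|<\mu$. This yields the Neumann-type expansion $R_\lambda = \sum_{k=0}^{\infty}(\mu-\lambda)^k R_\mu^{k+1}$, convergent in operator norm. Substituting into the display above and reindexing gives the representation
\[
\ell_\lambda = \sum_{n=0}^{\infty}(\mu-\lambda)^n R_\mu^n \ell_\mu,
\]
a series absolutely convergent in $C(S)$ on the disk $|\mu-\lambda|<\mu$.

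This power-series representation shows at once that $\lambda\mapsto\ell_\lambda$ is real analytic on $(0,\infty)$ and in particular infinitely differentiable. Differentiating termwise and evaluating at $\lambda=\mu$ kills every term except the $n$th, producing $\ell_\mu^{(n)} = (-1)^n n!\, R_\mu^n \ell_\mu$; since $\mu>0$ was arbitrary, this is precisely \eqref{owc:d}.

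The only place requiring any real care is the justification of the Neumann expansion and of the termwise differentiation, but both are routine consequences of the contraction bound $\|R_\lambda\|\le 1/\lambda$ together with the geometric decay of the remainder. It is worth noting that neither the nontriviality in~(a) nor the boundary behaviour in~(b) is used in this proposition; only the functional equation~(c) and the fact that $\ell_\mu\in C(S)$ enter the argument.
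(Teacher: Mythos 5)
Your proof is correct, but it takes a genuinely different route from the paper's. The paper first establishes continuity of $\lam \mapsto \ell_\lam$ directly from \eqref{owc:2} and the bound $\|\lam\rla\|\le 1$, then proves \eqref{owc:d} by induction on $n$, computing difference quotients of $\ell_\lam^{(n)}$ and using the identity $(\rla)'=-(\rla)^2$. You instead fix $\mu$, iterate the Hilbert equation \eqref{owc:3} into the Neumann expansion $\rla=\sum_{k\ge 0}(\mu-\lam)^k\rmi^{\,k+1}$ (valid for $|\mu-\lam|<\mu$ by the contraction bound), and substitute into $\ell_\lam=\ell_\mu+(\mu-\lam)\rla\ell_\mu$ to obtain the norm-convergent power series $\ell_\lam=\sum_{n\ge 0}(\mu-\lam)^n\rmi^{\,n}\ell_\mu$; the derivative formula then falls out of the coefficients. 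Your route buys slightly more: it shows $\lam\mapsto\ell_\lam$ is real analytic, not merely $C^\infty$, and it reveals that the local-boundedness hypothesis is actually superfluous here, since the entire function on $(0,2\mu)$ is reconstructed from the single value $\ell_\mu$ and the resolvent (the paper's proof, by contrast, needs local boundedness to get continuity off the ground). The paper's induction is more elementary in that it never needs the geometric-series remainder estimate, and it directly exhibits the mechanism $(\rla)'=-(\rla)^2$ that makes the formula \eqref{owc:d} tick. Both arguments are complete; the only point you wave at — termwise differentiation — is indeed routine for a Banach-space-valued power series inside its disk of convergence.
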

\begin{proof} Fix a $\lam \in (0,\infty)$, and let $M$ be such that $\|\ell_\mu\|\le M$ for $\mu $ in a neighborhood  of $\lam$. Since $\|\lam \rla \|\le 1,$ the left-hand side of \eqref{owc:2} does not exceed $|\lam - \mu| \frac M\lambda $ in this neighborhood. It follows that, as $\mu \to \lam $, $\ell_\mu $ converges to $\ell_\lam$, establishing continuity of $\lam \mapsto \ell_\lam $.

To prove \eqref{owc:d} we proceed by  induction. For $n=1$ the formula is a direct consequence of \eqref{owc:2} and the already established continuity. Moreover, assuming \eqref{owc:d} is true for some $n$, we have 
\begin{align*}
\frac {\ell_\mu^{(n)} - \ell_\lam ^{(n)}}{\mu - \lam } &= \frac {(-1)^n n!}{\mu - \lam } \left [ (\rmi)^n \ell_\mu - (\rla)^n \ell_\lam \right ]\\ &= (-1)^n n! \left [ (\rmi)^n \left (\frac {\ell_\mu - \ell_\lam}{\mu - \lam}\right ) + \left (\frac {(\rmi)^n - (\rla)^n}{\mu -\lam} \right ) \ell_\lam  \right ].  
\end{align*} Since $(\rla)' = - (\rla)^2$, using the already checked case $n=1$, we obtain 
\begin{align*} \ell_\lam^{(n+1)} & = (-1)^n n! (\rla )^n \ell_\lam' + (-1)^{n+1} n! n (\rla)^{n-1}(\rla)^2 \ell_\lam \\ &= (-1)^{n+1}  (n+1)!(\rla)^{n+1} \ell_\lam, \end{align*} as desired.  
\end{proof} 

To recall, a real-valued, non-negative function  defined on $(0,\infty)$ is said to be absolutely monotone iff it is infinitely differentiable and for each $n$, its $(n+1)$st derivative has an opposite sign to its $n$th derivative. The Bernstein Theorem says that  a function is absolutely monotone iff it is the Laplace transform of a Borel, possibly infinite,  measure on $[0,\infty)$ (see \cite{feller}*{p. 439}). Thus, Proposition \ref{prop1} implies that 
if $\ell_\lam $ is the Laplace transform of an exit law, then
for each $x\in S$, $\lam \mapsto \ell_\lam (x)$, being absolutely monotone and satisfying $\lilz \ell_\lam (x)\le 1$,  is the Laplace transform of a Borel sub-probability measure,  say $m_x$, on $[0,\infty)$:
\begin{equation}\label{owc:dod} \ell_\lam (x)= \int_0^\infty \e^{-\lam t} \, m_x (\mud t), \quad \lam >0. \end{equation} 
We note that, for a Borel subset $\mathcal B$ of $[0,\infty)$, $m_x(\mathcal B)$ should be interpreted as the probability that the process generated by $A$ and starting at $x$ will leave $S_A$ at time $t\in \mathcal B$.

\begin{prop}\label{prop2} None of the measures $m_x, x\in S_A$ has an atom at $t=0$. \end{prop}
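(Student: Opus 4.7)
The plan is to translate the statement $m_x(\{0\})=0$ into a statement about the behaviour of $\ell_\lam(x)$ as $\lam\to\infty$, and then to obtain this behaviour from the resolvent identity \eqref{owc:2} together with the defining property of a Feller resolvent.

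First I would observe that, thanks to the Bernstein representation \eqref{owc:dod} already established just above the proposition, one has
\[
\ell_\lam(x)=\int_{[0,\infty)} \e^{-\lam t}\,m_x(\mud t),\qquad \lam>0.
\]
By dominated convergence (the integrand is bounded by $1$ and tends pointwise to $1_{\{0\}}(t)$ as $\lam\to\infty$),
\[
\lim_{\lam\to\infty}\ell_\lam(x)=m_x(\{0\}),
\]
so it suffices to prove that $\lim_{\lam\to\infty}\ell_\lam(x)=0$ for every $x\in S$.

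To see this, I would fix an arbitrary $\mu>0$ and rewrite \eqref{owc:2} as
\[
\ell_\lam=\ell_\mu-(\lam-\mu)\rla\ell_\mu =\ell_\mu-\lam\rla\ell_\mu+\mu\rla\ell_\mu.
\]
Since $\ell_\mu\in C(S)$, property 2.\ of a Feller resolvent (recalled in Section \ref{pofs}) gives $\lam\rla\ell_\mu\to\ell_\mu$ in $C(S)$ as $\lam\to\infty$. Moreover, $\|\rla\|\le\lam^{-1}$, so $\mu\rla\ell_\mu\to 0$ in $C(S)$. Combining these yields $\ell_\lam\to 0$ uniformly on $S$, and in particular pointwise, so $m_x(\{0\})=0$ for every $x\in S$.

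No serious obstacle is anticipated; the only point that must be handled carefully is the passage from the resolvent identity to the limit, but this is immediate once one isolates the term $\lam\rla\ell_\mu$ and invokes the approximation property of the Feller resolvent on the fixed element $\ell_\mu\in C(S)$.
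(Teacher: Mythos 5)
Your argument is correct and is essentially the paper's own proof: both rest on letting $\lam\to\infty$ in the identity \eqref{owc:2}, using regularity of the resolvent ($\lam\rla\ell_\mu\to\ell_\mu$) together with $\|\rla\|\le\lam^{-1}$ to conclude $\ell_\lam\to 0$, and then reading off $m_x(\{0\})=\lim_{\lam\to\infty}\ell_\lam(x)=0$ from the Bernstein representation \eqref{owc:dod}. The only cosmetic difference is that you solve for $\ell_\lam$ and exhibit the limit $\ell_\lam\to 0$ explicitly (which the paper records separately as Corollary \ref{cor1}), whereas the paper compares the limits of the two sides of \eqref{owc:2} and notes they can only agree if the atom vanishes.
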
 
\begin{proof} The resolvent $\rla, \lam >0$ is regular, that is, $\lil \lam \rla f = f, f \in \coss$. This implies that the left-hand side of \eqref{owc:2} converges to $\ell_\mu$, as $\lam \to \infty$.  On the other hand, by \eqref{owc:dod}, $\lil \ell_\lam (x) = m_x(\{0\})$, showing that the value of the right-hand side of \eqref{owc:2} at a point $x\in S$ converges to $\ell_\mu (x) - m_x(\{0\}).$ These two limits cannot be reconciled unless $m_x(\{0\})=0.$ 
 \end{proof}
\begin{cor}\label{cor1} As a by-product of the proof of Proposition \ref{prop2}, we obtain $\lil \ell_\lam = 0$ (in the norm of $\coss$). \end{cor}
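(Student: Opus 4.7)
The plan is to extract the conclusion directly from the argument already carried out in the proof of Proposition \ref{prop2}; the phrase ``as a by-product'' in the statement signals exactly this. What was used there was convergence in the norm of $C(S)$ on the left-hand side of \eqref{owc:2}, combined with a pointwise comparison on the right; to get Corollary \ref{cor1} I simply read the right-hand side in the norm as well.

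More precisely, the first step is to observe that the regularity of the resolvent supplies two facts: for every $f\in C(S)$, $\lam R_\lam f \to f$ uniformly on $S$ as $\lam\to\infty$, and also $\|R_\lam\|\le \lam^{-1}$, so $R_\lam f \to 0$ in norm. Applying both to $f=\ell_\mu$ yields
\[
(\lam - \mu) R_\lam \ell_\mu \;=\; \lam R_\lam \ell_\mu \;-\; \mu R_\lam \ell_\mu \;\xrightarrow[\lam\to\infty]{}\; \ell_\mu - 0 \;=\; \ell_\mu
\]
in the norm of $C(S)$. This is the same computation that underlies Proposition \ref{prop2}, but now read as a norm limit rather than a pointwise one.

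The second step is to combine this with \eqref{owc:2}, which identifies the left-hand side above with $\ell_\mu - \ell_\lam$. Thus $\ell_\mu - \ell_\lam \to \ell_\mu$ uniformly on $S$, which forces $\|\ell_\lam\|_{C(S)}\to 0$, exactly as claimed.

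I do not expect any real obstacle. The only point worth double-checking is that both convergences used in the first step are uniform: $\lam R_\lam \ell_\mu \to \ell_\mu$ is just the strong continuity of the resolvent at infinity applied to the fixed element $\ell_\mu \in C(S)$, and $\mu R_\lam \ell_\mu \to 0$ is an immediate consequence of the contractivity estimate $\|R_\lam\|\le \lam^{-1}$. Once these are in hand, the corollary is a one-line reading of the identity established in Proposition \ref{prop2}.
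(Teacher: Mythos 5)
Your argument is correct and is precisely the one the paper intends: the left-hand side $(\lam-\mu)R_\lam\ell_\mu$ of \eqref{owc:2} converges in norm to $\ell_\mu$ (by regularity of the resolvent together with $\|R_\lam\|\le\lam^{-1}$), and comparing with the right-hand side $\ell_\mu-\ell_\lam$ forces $\|\ell_\lam\|\to 0$. This is exactly the "by-product" of the proof of Proposition \ref{prop2} that the paper has in mind, so nothing further is needed.
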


Our next theorem establishes existence of an exit law for any non-conservative (not honest) Feller generator $A$.
As already recalled, a Feller generator $A$ with resolvent $\rla , \lam >0$ is conservative iff $\lam \rla 1_{S} = 1_{S}$ for each $\lam >0.$ 
Since $\|\lam \rla\|\le 1$ and $\rla \ge 0$, we see that for nonconservative $A$  the function $1_{S} - \lam \rla 1_{S} \in \coss$ is non-negative and nontrivial (\ie it does not vanish everywhere), for all $\lam >0$.

\begin{thm}\label{thm4} Suppose $A$ is not conservative. Then, there exists at least one exit law for $A$. 
 \end{thm}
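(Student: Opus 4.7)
The natural candidate for an exit law is the \emph{defect of conservativeness} itself, namely
\[ \ell_\lambda \coloneqq 1_S - \lambda \rla 1_S, \qquad \lambda > 0.\]
The plan is to simply verify that this family satisfies (a), (b) and (c).

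Property (a) is essentially built in: since $R_\lambda$ is positive and $\|\lambda \rla\|\le 1$, we have $0 \le \ell_\lambda \le 1_S$, so $\ell_\lambda \in \coss$ is non-negative and uniformly (hence locally) bounded in $\lambda$; nontriviality follows directly from the assumption that $A$ is not conservative, which by the characterization recalled in Section~\ref{pofs} means exactly that $\lambda \rla 1_S \neq 1_S$ for at least one $\lambda > 0$. Property (b) is immediate, since $\ell_\lambda(x) \le 1$ for every $\lambda>0$ and every $x \in S$.

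The only point that requires a calculation is (c). Using the Hilbert equation \eqref{owc:3} for the constant function $1_S$, I would compute
\[ \rla \ell_\mu = \rla 1_S - \mu \rla \rmi 1_S, \]
multiply by $\lambda - \mu$, and apply \eqref{owc:3} once more to the second term:
\[ (\lambda - \mu) \rla \ell_\mu = (\lambda-\mu)\rla 1_S - \mu\bigl(\rmi 1_S - \rla 1_S\bigr) = \lambda \rla 1_S - \mu \rmi 1_S. \]
The right-hand side is exactly $\ell_\mu - \ell_\lambda$, so \eqref{owc:2} holds.

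There is no real obstacle here; the only thing one has to see is that the defect $1_S - \lambda \rla 1_S$ is itself governed by the resolvent identity applied to $1_S$, and that the three defining conditions reduce to this observation together with trivial positivity/normalization bounds.
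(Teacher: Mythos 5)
Your proposal is correct and follows essentially the same route as the paper: the same candidate $\ell_\lam = 1_S - \lam \rla 1_S$, the same positivity/nontriviality observations for (a), and the same two-step application of the Hilbert equation to verify (c). The one point you gloss over is that condition (b) asserts a \emph{limit} as $\lam \to 0+$, whose existence does not follow from the bound $\ell_\lam(x)\le 1$ alone; the paper obtains it from the Laplace-transform representation \eqref{owc:dod}, though you could equally note that (c) gives $\ell_\mu - \ell_\lam = (\lam-\mu)\rla \ell_\mu \ge 0$ for $\mu \le \lam$, so $\lam \mapsto \ell_\lam(x)$ is non-increasing and the limit exists by monotonicity.
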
 
\begin{proof} 
By assumption 
\[ \ell_\lam \coloneqq  1_{S} - \lam \rla 1_{S}\ge 0, \qquad \lam >0 \]
is non-zero and $\|\ell_\lam \|\le 1.$ Moreover,
\[ \ell_\mu - \ell_\lam = \lam \rla 1_{S} - \mu \rmi 1_{S},\]
and, because of the resolvent equation for $\rla, \lam >0$,  
\begin{align*}
 (\lam - \mu) \rla \ell_\mu & = (\lam - \mu) \rla 1_{S} + \mu (\mu - \lam )\rla \rmi 1_S \\ & =  (\lam - \mu) \rla 1_{S} +\mu (\rla - \rmi )1_{S} \\ &= \lam \rla 1_{S} - \mu \rmi  1_{S}. \end{align*}
This establishes \eqref{owc:2}, and in particular implies representation \eqref{owc:dod}. This representation in turn, when combined with $\|\ell_\lam\|\le 1$, implies that the limit $\lilz \ell_\lam (x)$ exists and does not exceed $1$ for each $x\in S$.  
\end{proof}
In what follows, we will write $\dell$ to denote the particular Laplace transform of the exit law found in Theorem \ref{thm4}: 
\[ \dell \coloneqq 1_{S} - \lam \rla 1_{S}.\]

\begin{ex}\label{ex0} Let $S=[0,1]$ be the unit interval, and let $Af=f'$ on the domain composed of continuously differentiable $f$ such that $f'(1)+ \alpha f(1)=0,$ where $\alpha\ge 0$ is a given parameter. In the process related to $A$, a particle starting at an $x\in [0,1)$ moves to the right with constant velocity $v=1$; upon reaching $x=1$ it stays there for an exponential time with parameter $\alpha$, and after this exponential time elapses, the process is no longer defined (for $\alpha =0$ the process stays at $x=1$ for ever, and in particular is well-defined for all $t\ge 0$).  For this operator, the semigroup may be given explicitly: 
\[ \e^{tA} f(x) = \begin{cases} f(x+t), & t\le 1-x, \\
\e^{-\alpha (t-1 +x)} f(1),&  t > 1- x.\end{cases}\]
It follows that 
\begin{equation}\label{exa0:1} \rla f(x) = \int_x^1 \e^{\lam (x-y)} f(y) \ud y +\e^{\lam (x-1)} \frac {f(1)}{\lam + \alpha}. \end{equation}
Thus, here, (if $\alpha >0$) 
\[ \dell (x) = 1 - \lam \rla 1_{S}(x) = \e^{\lam (x-1)} \frac {\alpha}{\lam + \alpha}, \qquad x\in [0,1]. \]

The first factor in this product is the Laplace transform of the deterministic time $T=1-x$ needed for the process starting at an $x\in [0,1]$ to reach $x=1$; the other factor is the Laplace transform of the exponential time spent at the boundary $x=1$. Thus, all in all, $\della (x)$ is the Laplace transform of the lifetime of the process governed by $A$ that starts at $x\in [0,1].$ \qed 
\end{ex} 

The example presented above is a particular case of the more general result. Namely, we have the following observation. 
\begin{prop} Let $\tau$ be the lifetime of the process governed by a Feller resolvent $\rla, \lam>0$. Then $\dell $ is the Laplace transform of $\tau $:
\[ \dell (x) = E_x \e^{-\lam \tau}, \qquad x \in S.\]
 \end{prop}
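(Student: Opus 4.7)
The plan is to read off the identity directly from the probabilistic description \eqref{intro:2} of the semigroup. Applying \eqref{intro:2} to the constant function $f = 1_S$ gives
\[ T(t) 1_S (x) \;=\; E_x \, 1_{\{t \le \tau\}} \;=\; P_x(\tau \ge t), \qquad x \in S,\ t \ge 0. \]
Since the resolvent is the Laplace transform of the semigroup, Fubini's theorem (all integrands being bounded and non-negative) yields
\[ R_\lambda 1_S(x) \;=\; \int_0^\infty \e^{-\lambda t} \, P_x(\tau \ge t) \ud t, \qquad \lam >0. \]

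Next I would multiply by $\lam$ and compute the right-hand side by a single integration by parts in $t$, using the convention $\e^{-\lam \tau}=0$ on $\{\tau = \infty\}$. Writing $F(t) = P_x(\tau \le t)$, so that $P_x(\tau \ge t) = 1 - F(t-)$, the usual tail-formula for the Laplace transform of a non-negative (possibly improper) random variable gives
\[ \lam \int_0^\infty \e^{-\lam t} P_x(\tau \ge t) \ud t \;=\; 1 - \int_{[0,\infty)} \e^{-\lam t} \ud F(t) \;=\; 1 - E_x \e^{-\lam \tau}. \]
The boundary term at $t=\infty$ vanishes since $\lam >0$ and $P_x(\tau \ge t)\le 1$, while the mass $F$ may place at $+\infty$ contributes nothing because $\e^{-\lam \tau}=0$ there. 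Rearranging,
\[ \dell (x) \;=\; 1_S(x) - \lam \rla 1_S(x) \;=\; E_x \e^{-\lam \tau}, \]
which is exactly the claim.

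The only non-trivial piece is the bookkeeping around $\{\tau = \infty\}$: one must make sure the tail-integration formula is applied with the right convention, so that the possible defect $P_x(\tau = \infty) = 1 - F(\infty)$ does not enter the Laplace transform (it shouldn't, since $\e^{-\lam \tau}$ is declared to vanish there). Once this is made explicit, everything is elementary; no semigroup theory beyond \eqref{intro:2} is needed, and in particular Example \ref{ex0} is recovered by specializing to the explicit resolvent \eqref{exa0:1}.
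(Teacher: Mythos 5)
Your proof is correct and is essentially the paper's own argument: both start from \eqref{intro:2} applied to $1_S$ to get $T(t)1_S(x)=\pr_x(t\le\tau)$, take the Laplace transform, and apply the tail-integration formula with the convention $\e^{-\lam\tau}=0$ on $\{\tau=\infty\}$. Your extra bookkeeping about left limits of $F$ and the defect at $+\infty$ only makes explicit what the paper leaves implicit.
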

\begin{proof} By \eqref{intro:2}, $\e^{tA}1_{S} (x) = \pr_x (t\le \tau)$. On the other hand,
\begin{align*} E_x\e^{-\lam \tau}& = \lam \int_0^\infty \e^{-\lam t} \pr_x (\tau \le t) \ud t= 1 - \lam \int_0^\infty \e^{-\lam t} \pr_x (t \le \tau) \ud t \\
&= 1 - \lam \int_0^\infty \e^{-\lam t} \e^{tA} 1_{S} (x)  \ud t = 1 - \lam \rla 1_S(x),\end{align*}
as desired. \end{proof}

\begin{cor}\label{niczonko} We have 
\[ \max_{x\in S} \dell (x) < 1.\] 
\end{cor}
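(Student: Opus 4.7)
The plan is to argue by contradiction using the probabilistic (Laplace-transform) representation of $\dell$ together with Proposition \ref{prop2}. Since $S$ is compact and, as established in the proof of Theorem \ref{thm4}, $\dell\in C(S)$ is continuous, the supremum $M\coloneqq \sup_{x\in S}\dell(x)$ is attained at some point $x_0 \in S$; the goal is to rule out $M = 1$.

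Suppose for contradiction that $\dell(x_0)=1$. By Theorem \ref{thm4}, $\dell$ is the Laplace transform of an exit law, so Propositions \ref{prop1} and \ref{prop2} apply, and the representation \eqref{owc:dod} yields a sub-probability Borel measure $m_{x_0}$ on $[0,\infty)$ with
\[
1 \;=\; \dell(x_0) \;=\; \int_0^\infty \e^{-\lambda t}\,m_{x_0}(\mud t).
\]
Since $m_{x_0}([0,\infty))\le 1$ and $\e^{-\lambda t}\le 1$ with strict inequality for every $t>0$, the only way this integral can equal $1$ is if $m_{x_0}$ is concentrated at $\{0\}$, i.e.\ $m_{x_0}(\{0\})=1$. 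This directly contradicts Proposition \ref{prop2}, which asserts that no $m_x$ has an atom at $0$. Hence $M<1$, as claimed.

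Equivalently, one can argue via the interpretation $\dell(x)=E_x \e^{-\lambda \tau}$ from the preceding proposition: $\dell(x_0)=1$ would force $\tau=0$ $\pr_{x_0}$-a.s., which again is precluded by Proposition \ref{prop2}. No step is really an obstacle here; the one point to be careful about is invoking compactness of $S$ to convert the pointwise bound into a uniform one, which is why the result is stated as a maximum rather than a supremum.
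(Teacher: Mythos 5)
Your proof is correct and follows essentially the same route as the paper: the paper also argues that $\dell(x)=1$ would force the lifetime starting at $x$ to be $0$ almost surely, contradicting Proposition \ref{prop2}, and then invokes compactness of $S$ and continuity of $\dell$ to pass from the pointwise statement to the maximum. Your version merely spells out the measure-theoretic step (equality in $\int_0^\infty \e^{-\lam t}\,m_{x_0}(\mud t)\le 1$ forces $m_{x_0}(\{0\})=1$) that the paper leaves implicit.
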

\begin{proof} For no point $x\in S$ can we have $\dell (x) =1$ for this would mean that the lifetime of the process starting at $x$ is $0$, contradicting Proposition \ref{prop2}. Since $S$ is compact and $\dell $ is continuous, we are done. \end{proof}

\begin{ex}\label{ex01} Let $A$ be an honest Feller generator. Then, for any $\alpha > 0,$ $A_\alpha \coloneqq  A - \alpha I$ where $I$ is the identity operator, is a not honest Feller generator. The process related to $A_\alpha$ is identical to that related to $A$ up to an independent exponentially distributed time $T$ with parameter $\alpha $; from this point on, the process generated by $A_\alpha$ is undefined. In other words, $T$ is the lifetime of the modified process. Indeed, since $A$ is honest,
\[ 1_S - \lam \rez{A_\alpha} 1_S =  1_S - \lam \left (\lam + \alpha - A\right )^{-1} 1_S = {\textstyle \frac \lam{\lam + \alpha}} 1_S,\]
and $\lam \mapsto  \frac \lam{\lam + \alpha}$ is the Laplace transform of the distribution of $T$. 
\end{ex}

\newcommand{\nad}{\phi}

\section{Exit laws and excessive functions; Feller boundary}\label{elae} 
Let again $A$ be a Feller generator in $C(S)$ with resolvent $\rla , \lam >0.$ A  nonnegative function $\nad \in C(S) $ such that $\nad \le 1_{S}$ and 
\begin{equation}\label{exces} \lam \rla \nad \le \nad, \qquad \lam >0 \end{equation} 
is  termed \emph{excessive}, see \cites{bass,beznea,chunglectures,doobc,revuz}. We will say that an excessive $\nad$ is nontrivial if $\lam \rla \nad \not = \nad$ for at least one $\lam $ (and thus for all $\lam >0$). 
\begin{rem} In most monographs, the requirement that $\nad \in \coss$ is omitted in the definition of excessive function; instead other, milder, regularity conditions are imposed. In this paper, the assumption that the Laplace transform of an exit law is a member of $\coss$ seemed more pleasing.\end{rem}

As it turns out, many Laplace transforms of exit laws may be constructed from nontrivial excessive functions. Namely, calculations presented in Theorem \ref{thm4} show that \begin{equation} \label{edefef} \ell_\lam \coloneqq  \nad - \lam \rla \nad \end{equation} satisfies \eqref{owc:2}, and thus (being bounded by $1_{S}$) is the Laplace transform of an exit law as long as $\nad$ is excessive and nontrivial.

Conversely, suppose we are given the Laplace transform of an exit law $\ell_\lam, \lam >0$. Since, for each $x\in S$, $\ell_\mu (x)$ increases as $\mu \searrow 0$, and $\rla $s are integral operators, we may consider \begin{equation}\label{defef} \nad (x) \coloneqq \lim_{\mu \to 0} \ell_\mu (x)\le 1 .\end{equation} Then, letting $\mu \to 0$ in \eqref{owc:2} we obtain (by the Monotone Convergence Theorem), 
\begin{equation}\label{repra} \ell_\lam (x) = \nad (x) - \lam \rla \nad (x), \qquad x\in S, \lam >0.\end{equation}
It should be stressed, though, that in this representation of $\ell_\lam, \lam >0$, in contrast to \eqref{edefef}, $\nad $ need not be continuous. Nevertheless, the right-hand side  makes sense because the operator $\rla$, being related to a measurable kernel, has a natural extension to the space of bounded measurable functions. 

Even if we restrict ourselves to  $\nad \in \coss$, there still remains the question of uniqueness of representation of exit laws in terms of excessive functions. Fortunately, there is a straightforward test for such uniqueness: to make sure that $\nad$ in \eqref{repra} is determined by $\ell_\lam, \lam >0$ it suffices to check that there are no non-zero $f\in \coss $ such that $f =\lam \rla f $, that is, no non-zero $f\in \dom{A}$ such that $Af=0$.

In what follows we restrict ourselves to exit laws of the form \eqref{edefef} with $\nad \in \coss$ and assume that the kernel of $A$ is trivial. Moreover, since, 
as we shall see in the next section, our work is motivated by Feller processes that have more than one exit law, we will in fact be concerned with the situation described by  the following definition (comp. \cite{fellerbi} or \cite{fmc}*{Section 3.7}). 

\begin{defn}\label{defn:fb} \emph{We will say that the process governed by $A$ has a regular Feller exit boundary with $\ka$ exit points, if 
\begin{itemize}
\item [(i) ] the kernel of $A$ is trivial (that is, composed of zero function merely),
\item [(ii) ] there are nontrivial excessive functions $\nad^j, j\in \{1,\dots, \ka\}$ for $A$ such that \begin{equation}\label{repre} \sum_{j=1}^\ka \nad^j = 1_{S}.\end{equation}
\end{itemize}} \end{defn}
The Laplace transform of the exit law $\ell_\lam ^j$ related to $\nad^j$: 
\[ \ell_\lam^j \coloneqq \nad^j - \lam \rla \nad^j \]
will be somewhat informally called the law for exit through the $j$th exit (or: $j$th gate).

We stress again that $\nad^j$ are, by definition, continuous and that, by assumption (i), representation in terms of excessive functions is unique. It should be noted, however, that our definition says nothing about uniqueness of representation \eqref{repre}: it is possible, in particular, that one  of $\nad^j$s is a sum of two smaller nontrivial excessive functions (compare discussion in \cite{fellerbi} or \cite{fmc}*{Section 3.7}). Therefore, the definition should be understood to mean that the process has \emph{at least} $\ka$ ways of exiting its state-space, and it is in this sense that this definition is applied in what follows. Nevertheless, in the examples of Section \ref{examples} it will be clear from the probabilistic description of the processes involved that there are precisely one, two or $\ka$ exits there. 

\section{Concatenation of  $N$ processes}\label{cbnr} 

\subsection{Set-up}\label{setup}
Suppose we are given $N$ Feller processes in $N$ separate (i.e., disjoint) compact spaces $S_1,\dots, S_N$, generated by operators $A_1,\dots, A_N$, respectively. To avoid trivialities, we assume that at least one of these generators is not conservative. Since $S_i$s can be reordered if necessary, we assume without loss of generality that
the first $M\in\{1,\dots,N\}$ processes are not conservative, whereas the remaining ones are conservative. 
Moreover, we assume that the process governed by $A_i, i =1,\dots ,M$ has a regular Feller boundary with $\ka (i)$ exit points: there are excessive functions $\nad^{i,j}, j= 1,\dots, \ka (i)$ such that 
\begin{equation}\label{brzegi} \sum_{j=1}^{\ka (i)} \nad^{i,j} = 1_{S_i}, \qquad i =1,\dots , M. \end{equation} 
For the corresponding exit laws
\[ \ell_\lam^{i,j} \coloneqq \nad^{i,j} - \lam \rlai \nad^{i,j} \]
this means that 
\begin{equation}\label{brzegip} \sum_{j=1}^{\ka (i)} \ell_{\lam}^{i,j} = 1_{S_i} - \lam \rlai 1_{S_i} \eqqcolon \dell^i,  \qquad i =1,\dots , M. \end{equation}

We construct the resolvent of a Feller process on 
\[ \suu \coloneqq \dot{\bigcup_{i=1,\dots,N}} S_i ,\]  
(the dot stresses the fact that $S_i$ are disjoint; `u' for `union'), which is governed by the  following rules. 
\begin{itemize}
\item [(i)] Conditional on starting at an $x\in S_i$, the process is identical to that related to $A_i$ up to the random time when the latter is no longer defined (if $A_i$ is conservative, this random time is $\infty$ and thus both processes are identical at all times). 
 \item [(ii)] If and when the process related to $A_i$ is no longer defined, the process we construct starts afresh at a random point $y \in \suu$, the distribution of its position at this moment depending on the gate through which $S_i$ was exited. More specifically, this position is described by a Borel sub-probability measure $\mem_{i,j}$ on $\suu$. In most applications, the latter measure is supported in $\suu \setminus S_i$, but this assumption is not needed in this section.  
 \item [(iii)] If $y$ belongs to $S_j$, the constructed process is identical to that governed by $A_j$ up to the random time, when the latter is no longer defined, and so on. 
\end{itemize} 

\subsection{More notation}
To proceed, we need to establish more notation. First of all, to shorten formulae, we introduce
\begin{align*} \mc N & \coloneqq \{1,\dots, N\}, \\ 
\mc M & \coloneqq \{1,\dots, M\},\\
\mc {IJ} & \coloneqq \{ (i,j); i \in \mc M, j \in \{1,\dots ,\ka(i)\} \}.   
\end{align*}
Secondly, it will be convenient to think of an $f\in \cosi$ as a member of $\cosu$ also: to this end we extend $f$ to the entire $\suu$ by agreeing that it equals zero outside of $S_i$. This allows thinking of sums of  $f_i \in C(S_i)$ as members of $\cosu$: 
\[ f \coloneqq \dot{\sum_{i\in \mc N}} f_i ;\]
as above, the dot stresses the fact that we add together functions that are defined on separate spaces. Conversely, for an $f \in \cosu$ we will denote its part in $\cosi$ by $f_i$, that is, $f_i\coloneqq f 1_{S_i}$ or, with the identification described above in mind, $f_i = f_{|S_i}.$

Let $\rlai, \lam >0$ denote the resolvent of the $i$th process: \[ \rlai = \rez{A_i}, \qquad \lam >0, i\in \mc N. \] 
The formula
\begin{equation}\label{du}  \rladu f = \isuma \rlai f_i , \qquad \lam >0, f \in \cosu \end{equation}
describes the resolvent of the process called \emph{disjoint union} of the involved process (see \cite{sharpe}*{p. 83}). In this union there are no jumps between spaces $S_i,i \in \mc N$: if random evolution is started at $x\in S_i$, the rules of $A_i$ are obeyed throughout the process's lifetime. When a particle exists $S_i$, the process is left undefined; it never jumps to any other $S_j$ or back to $S_i$. We note the following connection between the exit laws of the right-hand side of  
\eqref{brzegip} and $\rladu, \lam >0$: 
\begin{equation} \isumap{\dell^i} = 1_{\suu} - \lam \rladu 1_{\suu} \eqqcolon \dell. \label{sumdel} \end{equation}

\subsection{The definition: first attempt}\label{td:fa} 

By the \emph{concatenation of processes} governed by $A_i, i \in \mc N$ we understand the process with the following resolvent: 
\begin{equation}\rlac f = \isuma [ \rla^i f_i + \sum_{j=1}^{\ka(i)} \left ( \int_{S} \rlac f \ud \mem_{i,j} \right )\ell_\lam^{i,j} ], \qquad \lam >0, f \in \cosu; \label{conc} \end{equation}
for $i \in \mc N\setminus \mc M$, the inner sum above is understood to be zero. The Borel measure $\mem_{i,j}$  which, by assumption, satisfies $0\le \mem_{i,j} (\suu) \le 1$, is the distribution of the point from which the process starts anew after exiting from the $i$th region through the $j$th gate. For some of $\mem_{i,j}$ we may have $\mem_{i,j} (\suu) < 1$,  some of $\mem_{i,j}$ might in fact be zero. Of course, if all of them are zero, $\rlac, \lam >0$ coincides with the resolvent $\rladu, \lam >0$ of the disjoint union process.

We note also that, because of the convention concerning the inner sum, formula \eqref{conc} may equivalently be written as 
\begin{equation}\label{concp} \rlac f =  \rladu f  + \isumap \sum_{j=1}^{\ka(i)} \left ( \int_{S} \rlac f \ud \mem_{i,j} \right )\ell_\lam^{i,j} , \end{equation}
where $\rladu , \lam >0$ is the resolvent of the disjoint union, defined in \eqref{du}.

\subsection{Consistency condition}
The first and obvious problem with \eqref{conc} or its equivalent form \eqref{concp} is that these formulae define $\rlac $ via yet undefined integrals 
\begin{equation}\label{calki} \int_{S} \rlac f \ud \mem_{i,j}, \qquad (i,j) \in \mc{IJ}.\end{equation}
Fortunately, these may be calculated beforehand from consistency condition for \eqref{conc}, which we will now derive.

To this end, we change summation indexes $i$ and $j$ in \eqref{concp} to $k$ and $l$, respectively, then fix $i\in \mc M$ and $j\in \{1,\dots, \ka (i)\}$, and integrate both sides of equation \eqref{concp} with respect to $\mem_{i,j}.$ Since $\ell_\lam ^{k,l}$ is supported in $S_k$, we obtain
\begin{equation}\label{cons} 
\int_{\suu} \rlac f \ud \mem_{i,j} = \int_{\suu} \rladu f \ud \mem_{i,j} + {\sum_{k\in\mc M}} \sum_{l=1}^{\ka (k)}  \int_{\suu} \rlac f \ud 
\mu_{k,l} \int_{S_k} \ell_\lam^{k,l} \ud \mem_{i,j}.\end{equation}
We claim that this consistency condition determines integrals \eqref{calki} as functions of 
\[ \int_{\suu} \rladu f \ud \mem_{i,j}, \quad (i,j) \in \mc{IJ} \mquad{ and } \int_{S_k} \ell_\lam^{k,l} \ud \mem_{i,j}, \quad (i,j),(k,l)\in \mc{IJ},\]
and these may be treated as known prior to defining $\rlac f$.

In the proof of our claim, we consider any $u \in \R^\ka $ where \[ \ka = \ka (1) + \dots + \ka (M),\] as the concatenation (in the linear algebra sense) of vectors 
\[
\begin{matrix} &(u_{1,1}& u_{1,2}&  \dots & u_{1,\ka(1)}) \in \R^{\ka (1)},\\
&  \vdots  & \vdots & \dots & \vdots  \phantom{\in \R^{\ka (1)},} \\ &(u_{M,1}& u_{M,2}&  \dots & u_{M,\ka(M)}) \in \R^{\ka (M)},
\end{matrix}\]
and equip $\R^\ka$ with the maximum norm $\|u\|= \max_{(i,j)\in \mc{IJ}} |u_{i,j}|$.  
 
\begin{lem}\label{cnp:lem1} Fix $\lam >0$. For any $v = \left ( v_{i,j} \right )_{(i,j) \in \mc{IJ}} \in \R^\ka $ there is precisely one $u  =  \left ( u_{i,j} \right )_{(i,j) \in \mc{IJ}}\in \R^\ka $ such that
\begin{equation}\label{mapka} u_{i,j} = v_{i,j} +  {\sum_{k\in\mc M}} \sum_{l=1}^{\ka (k)}u_{k,l} \int_{S_k} \ell_\lam^{k,l} \ud \mem_{i,j}, \qquad (i,j) \in \mc{IJ}.\end{equation}
Furthermore, $u\ge 0$ whenever $v\ge 0$. 
 \end{lem}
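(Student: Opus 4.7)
The plan is to recast \eqref{mapka} as the fixed-point equation $u = v + Tu$ on $\R^\ka$ equipped with the max norm, where $T\colon \R^\ka \to \R^\ka$ is the linear operator
\[ (Tu)_{i,j} \coloneqq \sum_{k \in \mc M} \sum_{l=1}^{\ka(k)} u_{k,l} \int_{S_k} \ell_\lam^{k,l} \ud \mem_{i,j}. \]
If I can show $\|T\| < 1$ in the operator norm induced by the max norm, then $I-T$ is invertible via the Neumann series, yielding the unique solution $u = (I-T)^{-1} v = \sum_{n=0}^\infty T^n v$. Non-negativity will then come for free, because $T$ manifestly preserves the positive cone (all matrix entries $\int_{S_k} \ell_\lam^{k,l} \ud \mem_{i,j}$ are non-negative), so every partial sum is non-negative whenever $v \ge 0$, and hence so is the limit.

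The heart of the argument is therefore the norm estimate. Assuming $\|u\|\le 1$ and applying $|u_{k,l}|\le 1$, I would collapse the inner sum over $l$ by \eqref{brzegip} into $\dell^k$, then use the standing convention of extending $\dell^k$ by zero off $S_k$ to obtain, in view of \eqref{sumdel},
\[ |(Tu)_{i,j}| \le \sum_{k\in \mc M} \int_{S_k}\dell^k \ud \mem_{i,j} = \int_{\suu} \dell \ud \mem_{i,j} \le \Bigl(\max_{\suu}\dell\Bigr)\,\mem_{i,j}(\suu). \]
Since $\mem_{i,j}(\suu)\le 1$ by assumption, the whole question reduces to showing that $\max_\suu \dell$ is strictly less than $1$.

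The main (and really only) obstacle is this strict inequality: the pointwise bounds $\dell^k<1$ alone are not enough, I need a uniform gap. This is exactly where Corollary \ref{niczonko} enters. Applied in each $S_k$ with $k\in \mc M$, it gives $q_k \coloneqq \max_{x\in S_k}\dell^k(x) < 1$; setting $q \coloneqq \max_{k\in \mc M} q_k$ and using finiteness of $\mc M$, I still have $q<1$. Because $\dell$ vanishes on $S_k$ for $k\notin \mc M$, this delivers $\max_\suu \dell = q$, whence $\|T\| \le q < 1$, and the Neumann series argument closes the proof.
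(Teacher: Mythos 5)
Your proposal is correct and follows essentially the same route as the paper's proof: the same Neumann-series argument for the operator (called $N_\lam$ there), the same collapse of the double sum via \eqref{brzegip} and \eqref{sumdel} to $\int_{\suu}\dell\ud\mem_{i,j}$, and the same appeal to Corollary \ref{niczonko} plus finiteness of $\mc{IJ}$ to get the strict norm bound, with positivity of the operator giving $u\ge 0$. The only cosmetic difference is that you make explicit the step $\max_{\suu}\dell=\max_{k\in\mc M}\max_{S_k}\dell^k<1$, which the paper leaves implicit.
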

\begin{proof} It suffices to show that the linear map, say, $N_\lam$, that assigns
\[\left (   {\sum_{k\in\mc M}} \sum_{l=1}^{\ka (k)}w_{k,l} \int_{S_k} \ell_\lam^{k,l} \ud \mem_{i,j} \right  )_{(i,j)\in \mc {IJ}} \in \R^\ka \]
to a $ w \coloneqq (w_{i,j})_{(i,j)\in \mc {IJ}}$, has norm smaller than $1$. For, then the series $\sum_{n=0}^\infty N_\lam^n$ converges and $u\coloneqq (\sum_{n=0}^\infty N_\lam^n )v$ is the unique solution to \eqref{mapka}. Moreover, $u\ge 0$ provided that $v\ge 0$ because $N_\lam$ is a non-negative operator. 

On the other hand, obviously, 
\[ \|N_\lam w \| \le \|w\| \max_{(i,j)\in \mc{IJ}} \sum_{k\in \mc M} \sum_{l=1}^{\ka (k)} \int_{S_k} \ell_\lam^{k,l} \ud \mem_{i,j} ,\]
whereas, by \eqref{brzegip} in the first step and \eqref{sumdel} in the last, 
\begin{equation}\label{niezlasumka}  \sum_{k\in \mc M} \sum_{l=1}^{\ka (k)} \int_{S_k} \ell_\lam^{k,l} \ud \mem_{i,j} =   \sum_{k\in \mc M} \int_{S_k} \dell^k \ud \mem_{i,j} = \int_{\suu} \dot{\sum_{k\in \mc M}} \dell^k \ud \mem_{i,j} = \int_{\suu} \dell\ud \mem_{i,j}.\end{equation}
This means that 
\[ \|N_\lam\| \le \max_{(i,j)\in \mc{IJ}}  \int_{\suu} \dell\ud \mem_{i,j}.\]
Now, since $\mem_{i,j}$s are sub-probability measures, Corollary \ref{niczonko} tells us that $0\le \int_{\suu} \dell \ud \mem_{i,j} < 1$ for each $(i,j)\in \mc{IJ}.$ This completes the proof, the set $ \mc{IJ}$ being finite. 
\end{proof}
In what follows we write $M_\lam \coloneqq  I - N_\lam$ where $I$ is the identity operator in $\R^\ka$. Our lemma says that the  operator $M_\lam$ is invertible with
\[ M_\lam^{-1} \coloneqq \sum_{n=0}^\infty N_\lam^n .\]

\subsection{Back to the definition} \label{bttd}
Lemma \ref{cnp:lem1} allows filling the hole in the definition given in Section \ref{td:fa} as follows. 
For $\lam >0$ and $f\in \cosu$ we define vectors $v=v(f,\lam)$ and $u=u(f,\lam)$ in $\R^\ka$ by
\[ v = \left (\int_{S} \rladu g \ud \mem_{i,j} \right )_{(i,j) \in \mc {IJ}} \mquad{
and } u = (u_{i,j}(f,\lam))_{(i,j) \in \mc {IJ}}  = M_\lam^{-1} v, \] 
and then let $\rlac f$ be defined by (cf. \eqref{concp})
\begin{equation}\label{concb} \rlac f =   \rladu f  + \isumap \sum_{j=1}^{\ka(i)} u_{i,j}(f,\lam) \ell_\lam^{i,j}.
 \end{equation}
Since $\ell^{i,j}_\lam$s are continuous functions, $\rlac $ maps $\cosu$ to $\cosu,$ and it is clear that it is a linear operator.

In the next section, we will show that the so-defined operators $\rlac, \lam >0$ form a Feller resolvent. Here we would like to argue merely that, for the so-defined $\rlac f$, the vector of integrals $\int_{\suu} \rlac f \ud \mem_{i,j}, (i,j) \in \mc {IJ}$ coincides with $u(f,\lam):$
\begin{equation}\label{bttd:1} \widetilde u \coloneqq \left ( \int_{\suu} \rlac f \ud \mem_{i,j} \right )_{(i,j) \in \mc {IJ}} =  \left ( u_{i,j}(f,\lam) \right )_{(i,j) \in \mc {IJ}} \end{equation}
 Indeed, integrating \eqref{concb} over $\suu$ with respect to $\mem_{i,j}$, we check that 
\[ \widetilde u = v + N_\lam u . \]
But $u$ is the only solution to the equation $u=v +N_\lam u$, where $v$ is given, and so, in particular, $v + N_\lam u$ equals $u$. This establishes the claim.

\subsection{$\rlac, \lam >0$ is a Feller resolvent in $\cosu$}

For the proof that $\rlac, \lam >0$ is a Feller resolvent, we need two lemmas. The first of these will help us prove that $\rlac , \lam >0$ satisfies the Hilbert equation, the second will be instrumental in proving $\lam \rlac 1_{\suu} \le 1_{\suu}.$   
\begin{lem}\label{cbnrl2} For $\lam ,\mu>0$ and $f \in \cosu$,
\[ (\lam - \mu) u(\rlac f,\mu) = u(f,\mu) - u(f,\lam). \]
In other words, 
\[ \int_{\suu} (\lam - \mu)\rmic \rlac f \ud \mem_{i,j} = \int_{\suu} \rmic f \ud \mem_{i,j} -  \int_{\suu} \rlac f \ud \mem_{i,j} , \qquad (i,j) \in \mc {IJ}. \]
\end{lem}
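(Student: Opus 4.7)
My plan is to apply the matrix $M_\mu = I - N_\mu$ (which is invertible by Lemma \ref{cnp:lem1}) to both sides of the desired identity and verify the resulting vector equation componentwise. By the definition $u(\cdot,\mu) = M_\mu^{-1} v(\cdot,\mu)$, the left-hand side becomes $(\lam-\mu)\, v(\rlac f,\mu)$, whose $(i,j)$th entry is $(\lam-\mu)\int_\suu \rmidu \rlac f \ud \mem_{i,j}$. For the right-hand side, I would rewrite
\[ M_\mu u(f,\lam) = u(f,\lam) - N_\mu u(f,\lam) = v(f,\lam) + (N_\lam - N_\mu)\,u(f,\lam), \]
using $M_\lam u(f,\lam) = v(f,\lam)$ in the second equality, so that
\[ M_\mu\bigl[u(f,\mu)-u(f,\lam)\bigr] = v(f,\mu) - v(f,\lam) + (N_\mu - N_\lam)\,u(f,\lam). \]

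Next I would compute $\rmidu \rlac f$ directly from \eqref{concb}. Since $\rladu, \lam > 0$ is the disjoint union of the Feller resolvents $\rlai$, it satisfies the Hilbert equation, so $\rmidu \rladu f = (\rmidu f - \rladu f)/(\lam-\mu)$; similarly, the exit-law identity \eqref{owc:2} for each $\rlai$, with the roles of $\lam$ and $\mu$ swapped, yields $R_{\mu,k}\,\ell_\lam^{k,l} = (\ell_\mu^{k,l} - \ell_\lam^{k,l})/(\lam-\mu)$ (and only the $k$th block of $\rmidu$ contributes, because $\ell_\lam^{k,l}$ is supported in $S_k$). Combining these,
\[ (\lam-\mu)\,\rmidu \rlac f = \bigl(\rmidu f - \rladu f\bigr) + \sum_{(k,l)\in\mc{IJ}} u_{k,l}(f,\lam)\bigl(\ell_\mu^{k,l} - \ell_\lam^{k,l}\bigr). \]

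Integrating this against $\mem_{i,j}$ and recognizing $\int_{S_k} \ell_\lam^{k,l} \ud \mem_{i,j}$ as the $(i,j;k,l)$-entry of $N_\lam$, the right side reads $\bigl(v_{i,j}(f,\mu) - v_{i,j}(f,\lam)\bigr) + [(N_\mu - N_\lam)\,u(f,\lam)]_{i,j}$, which agrees exactly with $M_\mu[u(f,\mu)-u(f,\lam)]$ componentwise. Applying $M_\mu^{-1}$ finishes the proof of the vector form $(\lam-\mu)\,u(\rlac f,\mu) = u(f,\mu) - u(f,\lam)$; the integral form stated in the lemma then follows from \eqref{bttd:1} applied to $\rlac f$ in place of $f$ (with parameter $\mu$), which says precisely that the $(i,j)$-component of $u(\rlac f,\mu)$ equals $\int_\suu \rmic \rlac f \ud \mem_{i,j}$. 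The main obstacle is purely bookkeeping: keeping track of the signs in \eqref{owc:2} when $\lam$ and $\mu$ swap roles, and being careful that $\rmidu \ell_\lam^{k,l}$ reduces to the action of the $k$th component resolvent.
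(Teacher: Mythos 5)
Your proof is correct and follows essentially the same route as the paper's: both rest on the Hilbert equation for $\rladu$, the exit-law identity \eqref{owc:2} with the roles of $\lam$ and $\mu$ swapped applied blockwise to $\ell_\lam^{k,l}$, and the defining relation $u=v+N_\lam u$. The only difference is organizational — you apply $M_\mu$ to both sides and match components, whereas the paper carries $M_\mu^{-1}$ through the computation and recognizes $M_\mu^{-1}M_\mu u(f,\lam)$ at the end via \eqref{bttd:1}; the underlying algebra is identical.
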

\begin{proof}
We have 
\begin{align*}
u(\rlac f ,\mu) &=M_\mu^{-1} \left (\int_{S} \rmidu (\rlac f) \ud \mem_{i,j} \right )_{(i,j) \in \mc {IJ}} \\
&=M_\mu^{-1} \left (\int_{S} [\rmidu  \rladu f  + \rladu \isumap \sum_{j=1}^{\ka(i)} u_{i,j}(f,\lam) \ell_\lam^{i,j}] \ud \mem_{i,j} \right )_{(i,j) \in \mc {IJ}} \\
&=M_\mu^{-1} \left (\int_{S} [\rmidu  \rladu f  + \isumap \sum_{j=1}^{\ka(i)} u_{i,j}(f,\lam) \rlai \ell_\lam^{i,j}] \ud \mem_{i,j} \right )_{(i,j) \in \mc {IJ}},
\end{align*} 
with the last equality by \eqref{du}. 
Therefore, the resolvent equation and the definition of the Laplace transform of an exit law show that $(\lam - \mu)u(\rlac f,\mu)$ equals
\begin{align*}
&M_\mu^{-1} \left ( \int_{S}\rmidu f  \ud \mem_{i,j}  \right )_{(i,j) \in \mc {IJ}}
\\&-  M_\mu^{-1} \left (\int_{S} [ \rladu f   +  \isumap \sum_{j=1}^{\ka(i)} u_{i,j}(f,\lam) ( \ell_\lam^{i,j} - \ell_\mu^{i,j})]  \ud \mem_{i,j} \right )_{(i,j) \in \mc {IJ}} \\
=& \, u(f,\mu) - M_\mu^{-1} \left (\int_{S}  \rlac f \ud \mem_{i,j}   -  \sum_{i\in \mc M} \sum_{j=1}^{\ka(i)} u_{i,j}(f,\lam)\int_{S_i} \ell_\mu^{i,j}   \ud \mem_{i,j} \right )_{(i,j) \in \mc {IJ}}.\end{align*}
Since, as established at the end of Section \ref{bttd}, $\int_{S}  \rlac f \ud \mem_{i,j}$ is the same as $u_{i,j} (f,\lam)$,
the second term here equals
$ M_\mu^{-1} M_\mu u (f,\lam) = u(f,\lam)$. This completes the proof. 
 \end{proof}

\begin{lem}\label{pdojs} For $\lam >0$, 
\[ \lam u(1_{\suu}, \lam) \le \left ( \mem_{i,j} (\suu) \right )_{(i,j)\in \mc {IJ}}. \]
Moreover, the inequality can be replaced by equality provided that $\mem_{i,j} (\suu) =1$ for all $(i,j) \in \mc {IJ}.$ 

 \end{lem}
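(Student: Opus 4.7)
The plan is to compute $u(1_{\suu},\lam) = M_\lam^{-1} v(1_{\suu},\lam)$ in closed form and then compare it componentwise with $\mem^{\ast} := \left(\mem_{i,j}(\suu)\right)_{(i,j)\in\mc{IJ}}$. The bridge between the two is the observation, implicit in \eqref{niezlasumka}, that $N_\lam$ maps the all-ones vector $\mathbf{1}\in\R^\ka$ to $\left(\int_{\suu}\dell\ud\mem_{i,j}\right)_{(i,j)\in\mc{IJ}}$, since on each row one simply resums the exit laws $\ell_\lam^{k,l}$ across $l$ and $k$ into $\dell$.

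Next, I would evaluate $v(1_{\suu},\lam)$. By definition $v(1_{\suu},\lam)_{i,j} = \int_{\suu}\rladu 1_{\suu}\ud\mem_{i,j}$, and \eqref{sumdel} tells us $\lam\rladu 1_{\suu} = 1_{\suu} - \dell$. Integrating against $\mem_{i,j}$ gives $\lam v(1_{\suu},\lam)_{i,j} = \mem_{i,j}(\suu) - \int_{\suu}\dell\ud\mem_{i,j}$, which by the previous paragraph reads
\[ \lam v(1_{\suu},\lam) = \mem^{\ast} - N_\lam \mathbf{1}. \]
Applying $M_\lam^{-1}$ and using the algebraic identity $M_\lam^{-1}N_\lam = M_\lam^{-1}(I - M_\lam) = M_\lam^{-1} - I$, I obtain
\[ \lam u(1_{\suu},\lam) = M_\lam^{-1}\mem^{\ast} - (M_\lam^{-1}\mathbf{1} - \mathbf{1}) = \mathbf{1} + M_\lam^{-1}(\mem^{\ast} - \mathbf{1}). \]

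The conclusion then follows from a one-line rearrangement: subtracting $\mem^{\ast}$ on both sides and grouping,
\[ \lam u(1_{\suu},\lam) - \mem^{\ast} = (M_\lam^{-1} - I)(\mem^{\ast} - \mathbf{1}) = M_\lam^{-1}N_\lam(\mem^{\ast} - \mathbf{1}). \]
Since each $\mem_{i,j}$ is a sub-probability measure, $\mem^{\ast}\le \mathbf{1}$ componentwise, while $M_\lam^{-1} = \sum_{n=0}^\infty N_\lam^n$ and $N_\lam$ are both non-negative, so the right-hand side is non-positive and the stated inequality follows. In the case $\mem_{i,j}(\suu)=1$ for all $(i,j)$, we have $\mem^{\ast}=\mathbf{1}$, so the right-hand side vanishes identically and the inequality collapses to equality.

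The only mildly delicate point is getting the bookkeeping of $N_\lam \mathbf{1}$ right — once the identity $\lam\rladu 1_{\suu} = 1_{\suu}-\dell$ is combined with \eqref{niezlasumka}, everything is linear algebra over $\R^\ka$, and the non-negativity of $M_\lam^{-1}$ (already noted in the proof of Lemma \ref{cnp:lem1}) does the rest.
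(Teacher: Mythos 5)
Your proof is correct, but it takes a genuinely different route from the paper's. The paper proves the inequality by induction on the partial sums of the Neumann series: setting $u^0=v$ and $u^{n+1}=v+N_\lam u^n$, it shows $\lam u^n \le \left(\mem_{i,j}(\suu)\right)_{(i,j)\in\mc{IJ}}$ for every $n$ using $u^n_{i,j}\le 1$ together with the identity \eqref{niezlasumka} and $\lam\rladu 1_{\suu}+\dell=1_{\suu}$, and then handles the equality case separately by guessing that the constant vector $\left(\lam^{-1}\right)_{(i,j)\in\mc{IJ}}$ solves $u=v+N_\lam u$ and invoking uniqueness. You instead exploit the same two facts to write $\lam v(1_{\suu},\lam)=\mem^{\ast}-N_\lam\mathbf{1}$ and then solve in closed form, arriving at the single identity $\lam u(1_{\suu},\lam)-\mem^{\ast}=M_\lam^{-1}N_\lam(\mem^{\ast}-\mathbf{1})$, from which both the inequality (via non-negativity of $M_\lam^{-1}N_\lam=\sum_{n\ge 1}N_\lam^n$ and $\mem^{\ast}\le\mathbf{1}$) and the equality case ($\mem^{\ast}=\mathbf{1}$ makes the right-hand side vanish) drop out at once. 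Your argument is shorter and unifies the two assertions; the paper's induction is more pedestrian but makes visible the monotone approximation scheme that reappears elsewhere. Both hinge on the same computation \eqref{niezlasumka}, and all the ingredients you use (non-negativity of $N_\lam$ and of $M_\lam^{-1}$, the identity \eqref{sumdel}) are already established in the paper, so your proof is complete as written.
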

\begin{proof} Let 
\[ v = v(1_{\suu},\lam) = \left ( \int_{\suu} \rladu 1_{\suu} \ud \mem_{i,j} \right )_{(i,j)\in \mc {IJ}},\]
so that, by definition $u(1_{\suu}, \lam) = (\sum_{n=0}^\infty N_\lam^n ) v.$ We have, in other words, $u(1_{\suu},\lam) = \gra u^n $, where $u^0 = v$ and $u^{n+1} = v + N_\lam u^n, n \ge 1.$ It suffices, therefore, to show that
\begin{equation}\label{indukcyjne} \lam u^n \le \left ( \mem_{i,j} (\suu) \right )_{(i,j)\in \mc {IJ}}, \end{equation}
and we will do this by induction argument.

This condition holds for $n=0$, because $\lam \rladu 1_{\suu} \le 1_{\suu}.$ Moreover, writing 
\[ u^n = \left ( u_{i,j}^n \right )_{(i,j)\in \mc {IJ}}, \qquad n \ge 0, \] 
and assuming that \eqref{indukcyjne} holds for an $n$, we obtain 
\begin{align} \lam u^{n+1}_{i,j} & = \int_{\suu} \lam \rladu 1_{\suu} \ud \mem_{i,j} + \sum_{k\in \mc M} \sum_{l=1}^{\ka (k)} u_{i,j}^n \int_{S_k} \ell_\lam^{k,l} \ud \mem_{i,j} \nonumber \\
& \le  \int_{\suu} \lam \rladu 1_{\suu} \ud \mem_{i,j} + \sum_{k\in \mc M} \sum_{l=1}^{\ka (k)}  \int_{S_k} \ell_\lam^{k,l} \ud \mem_{i,j}, 
\label{niezlasumkap} \end{align}
because, clearly, $u^n_{i,j} \le \mem_{i,j}(\suu)\le 1.$ As proved in \eqref{niezlasumka}, the double sum here equals $\int_{\suu} \dell \ud \mem_{i,j}$. Hence, 
\[ \lam u^{n+1}_{i,j} \le \int_{\suu} [\lam \rladu 1_{\suu} +\dell ]\ud \mem_{i,j}=  \int_{\suu} 1_{\suu} \ud \mem_{i,j} = \mem_{i,j}  (\suu),\]
as desired. 

We are thus left with proving the last assertion. To this end, we note that, as long as all $\mem_{i,j} (\suu)$ are equal to $1$, the vector $u\coloneqq  \left ( \lam^{-1} \right )_{(i,j)\in \mc {IJ}}$ satisfies 
\begin{equation}\label{uv} u = v + N_\lam u, \end{equation}
for $v$ defined at the beginning of our proof. Indeed, the $(i,j)$th coordinate of the right hand side is equal to the expression in the second line of \eqref{niezlasumkap}, divided by $\lam .$  As proved below \eqref{niezlasumkap}, this expression further reduces to $\frac 1\lam \mem_{i,j}  (\suu)=\frac 1\lam $. But $u(1_{\suu},\lam)$ is the unique  solution to equation \eqref{uv}, and so we must have $u(1_{\suu},\lam) = u$.  
This shows the claim.
\end{proof}

We are finally ready to prove our main theorem in this section, and a fundamental result for all that follows.  

\begin{thm}\label{thm5} The family $\rlac , \lam >0$ defined by \eqref{concb} is a Feller resolvent on $\cosu $. It is conservative if all $\mem_{i,j}$s are probability measures.  
\end{thm}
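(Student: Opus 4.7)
The plan is to verify the three defining properties of a Feller resolvent, together with positivity, for the family $\rlac, \lam > 0$ defined by \eqref{concb}, and then to read off conservativity from the computation of $\lam \rlac 1_\suu$.

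I will begin with \emph{positivity}. Since $\rladu$ is non-negative, $\ell_\lam^{i,j}\ge 0$, and the vector $u(f,\lam) = M_\lam^{-1} v(f,\lam) = \sum_{n=0}^\infty N_\lam^n v(f,\lam)$ is coordinatewise non-negative whenever $f\ge 0$ by Lemma \ref{cnp:lem1}, formula \eqref{concb} will give $\rlac f \ge 0$. Next, for the sub-Markov bound, I will apply \eqref{concb} to $1_\suu$, multiply by $\lam$, and dominate $\lam u_{i,j}(1_\suu,\lam)$ by $\mem_{i,j}(\suu)\le 1$ via Lemma \ref{pdojs}; the resulting estimate $\lam \rlac 1_\suu \le \lam \rladu 1_\suu + \isumap \sum_{j=1}^{\ka(i)} \ell_\lam^{i,j}$ should collapse to $\lam \rladu 1_\suu + \dell = 1_\suu$ by \eqref{brzegip} and \eqref{sumdel}. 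If every $\mem_{i,j}$ is a probability measure, the second assertion of Lemma \ref{pdojs} turns the first inequality into equality, giving $\lam \rlac 1_\suu = 1_\suu$, that is, conservativity.

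The crux of the proof will be the \emph{Hilbert equation} $(\lam-\mu)\rmic\rlac f = \rmic f - \rlac f$. I plan to substitute \eqref{concb} for $\rlac f$ and then apply $\rmic$ using \eqref{concb} once more with $\rlac f$ in place of $f$. The resulting expression should split into three pieces: (i) the term $\rmidu \rladu f$, handled by the Hilbert equation of the disjoint-union resolvent $\rladu$; (ii) the terms $u_{i,j}(f,\lam)\, \rmidu \ell_\lam^{i,j}$, for which $\rmidu \ell_\lam^{i,j} = \rmii \ell_\lam^{i,j}$ satisfies \eqref{owc:2}; and (iii) the second-order correction $u_{i,j}(\rlac f,\mu)\, \ell_\mu^{i,j}$, which is precisely what Lemma \ref{cbnrl2} is designed to replace by $(\lam-\mu)^{-1}[u_{i,j}(f,\mu)-u_{i,j}(f,\lam)]$. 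After multiplying by $(\lam-\mu)$ and regrouping, the $\ell_\mu^{i,j}$-contributions from (ii) and (iii) should combine into $u_{i,j}(f,\mu)\ell_\mu^{i,j}$, the $\ell_\lam^{i,j}$-contributions should recombine into the second correction sum in $\rlac f$ with a minus sign, and together with $\rmidu f - \rladu f$ from (i) this is exactly $\rmic f - \rlac f$.

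Finally, for the regularity $\lim_{\lam\to\infty}\lam \rlac f = f$ in $\cosu$, the first summand in \eqref{concb} contributes $\lim_{\lam\to\infty}\lam \rladu f = f$ by the regularity of each $\rlai$, and each correction term $\lam u_{i,j}(f,\lam)\, \ell_\lam^{i,j}$ vanishes in norm: by non-negativity and linearity of $u$, $|u_{i,j}(f,\lam)| \le \|f\| u_{i,j}(1_\suu,\lam)$, so $\lam u_{i,j}(f,\lam)$ is uniformly bounded in $\lam$ thanks to Lemma \ref{pdojs}, whereas $\|\ell_\lam^{i,j}\|\to 0$ by Corollary \ref{cor1}. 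The principal obstacle will be the Hilbert equation, not because any single manipulation is hard but because \eqref{concb} must be unfolded twice; Lemma \ref{cbnrl2} is indispensable there, reducing $u_{i,j}(\rlac f,\mu)$ to expressions involving only $u_{i,j}(f,\cdot)$ so that all terms assemble cleanly into $\rmic f - \rlac f$.
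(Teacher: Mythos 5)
Your proposal is correct and follows essentially the same route as the paper: positivity and the sub-Markov bound via Lemmas \ref{cnp:lem1} and \ref{pdojs} together with \eqref{brzegip}--\eqref{sumdel}, the Hilbert equation by unfolding \eqref{concb} twice and invoking the exit-law identity \eqref{owc:2} and Lemma \ref{cbnrl2}, and regularity from $\lil\lam\rladu f=f$ plus Corollary \ref{cor1}. The only (immaterial) difference is in the regularity step, where you dominate $|u_{i,j}(f,\lam)|$ by $\|f\|\,u_{i,j}(1_{\suu},\lam)$ via positivity of $N_\lam$, whereas the paper bounds $\lam u_{i,j}(f,\lam)=\int_{\suu}\lam\rlac f\ud\mem_{i,j}$ directly using $\|\lam\rlac\|\le 1$.
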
 
\begin{proof} We start with the Hilbert equation for $\rlac, \lam >0$. Fix $\lam, \mu >0$ and $f\in \cosu$.  By definition \eqref{concb}, 
\begin{align*}\rmic \rlac f &=\rmidu (\rlac f) + \sum_{i\in \mc M} \sum_{j=1}^{\ka (i)} u_{i,j} (\rlac f,\mu) \ell_\mu^{i,j} \\
& = \rmidu \rladu f + \sum_{i\in \mc M} \sum_{j=1}^{\ka (i)} u_{i,j} (f,\lam) \rmidu \ell_\mu^{i,j} + \sum_{i\in \mc M} \sum_{j=1}^{\ka (i)} u_{i,j} (\rlac f,\mu) \ell_\mu^{i,j}.
 \end{align*}
Therefore, by the resolvent equation and the definition of an exit law,
\begin{align*} (\lam - \mu) \rmic \rlac f & = \rmidu f - \rladu f + \sum_{i\in \mc M} \sum_{j=1}^{\ka (i)} u_{i,j} (f,\lam) (\ell_\mu^{i,j} - \ell_\lam^{i,j}) \\ &\phantom{=}+  (\lam - \mu) \sum_{i\in \mc M} \sum_{j=1}^{\ka (i)} u_{i,j} (\rlac f,\mu) \ell_\mu^{i,j}. \end{align*}
This differs from $\rmic f - \rlac f$ by the sum
\[  \sum_{i\in \mc M} \sum_{j=1}^{\ka (i)} [u_{i,j}(f,\lam) - u_{i,j}(f,\mu) +   (\lam - \mu)  u_{i,j} (\rlac f,\mu)] \ell_\mu^{i,j}. \]
However, all summands here are zero, by Lemma \ref{cbnrl2}, and so we obtain $(\lam -\mu)\rmic \rlac f= \rmic f- \rlac f.$ Since $\lam,\mu$ and $f$ are arbitrary, this shows that $\rlac , \lam >0$ is a pseudoresolvent.  

Next, by Lemma \ref{cnp:lem1}, $u\ge 0$ whenever $f\ge 0$. It follows that the operators $\rlac f, \lam >0$ are non-negative. 
Moreover, by Lemma \ref{pdojs}, 
\[\rlac 1_{\suu} = \rladu 1_{\suu} +  \isumap \sum_{j=1}^{\ka(i)} u_{i,j} (1_{\suu},\lam) \ell_\lam^{i,j} \le \rladu 1_{\suu} +  \isumap \sum_{j=1}^{\ka(i)} \ell_\lam^{i,j} \]
and the right hand side coincides, by \eqref{brzegip} and \eqref{sumdel}, with $ \rladu 1_{\suu} + \dell = 1_{\suu}.$ Hence, 
\begin{equation}\label{rlasms} \lam \rlac 1_{\suu} \le 1_{\suu}, \end{equation}
 and if all $\mem_{i,j}$s are probability measures, the inequality above can be replaced by equality --- this is a consequence of the other part of Lemma \ref{pdojs}. 

We are thus left with showing that $\lil \lam \rlac f = f$. Since $\rlai, \lam >0$ are regular, we have $\lil \lam \rladu f =f$ and thus it suffices to show that 
\begin{equation}\label{lil} \lil  \isumap \sum_{j=1}^{\ka(i)} \left ( \int_{S} \lam \rlac f \ud \mem_{i,j} \right )\ell_\lam^{i,j} =0 \end{equation}
However, inequality  \eqref{rlasms} combined with non-negativity of the operators $\rlac, \lam >0$ shows that 
\[ \|\lam \rlac \| \le 1 , \qquad  \lam >0 .\]
Therefore, the norm of the sum in \eqref{lil} does not exceed 
\[ \|f\| \sum_{i\in \mc M} \sum_{j=1}^{\ka(i)} \|\ell_\lam^{i,j}\|. \]
This converges to zero, as $\lam \to \infty$, because, by Corollary \ref{cor1}, $\lil \|\ell_\lam^{i,j}\| =0$ for all $(i,j) \in \mc{IJ}$. 
\end{proof}


\subsection{In (\ref{concp}) Laplace transforms of exit laws need to be used}\label{must} 

As we have see above, the fact that $\ell^{i,j}_\lam, \lam >0$ is the Laplace transform of an exit law was crucial in proving the resolvent equation for $\rlac , \lam >0$.  In fact, the defining equation \eqref{owc:2} was used both in the proof of Theorem \ref{thm5} and in the proof of Lemma \ref{cbnrl2}, preceding the theorem. The following result may be thought of as a converse to Theorem \ref{thm5}; we prove namely that formula \eqref{concp} will not work, unless  $\ell^{i,j}_\lam$s are Laplace transforms of exit laws. 

Suppose thus that, as in Section \ref{setup}, we have $N$ Feller processes, of which $M$ first are not conservative, and that $\ell_\lam^{i,j} \in \cosi, (i,j)\in \mc{IJ} $ are given functions such that $0\le \ell_\lam^{i,j}\le 1_{S_i}$ and \eqref{concp} defines a resolvent in $C(\suu)$ for all measures $\mem_{i,j}$. We claim that then, for each $(i,j)\in \mc {IJ},$ 
\begin{equation}(\lam - \mu) \rmii \ell_\lam^{i,j} = \ell_\mu^{i,j} - \ell_\lam^{i,j}, \qquad \lam, \mu >0.\label{claimik} \end{equation}

To prove this, we fix $(i,j) \in \mc{IJ}$, choose a $k\not = i$, a point $x\in S_k$, and take all measures involved to be zero, except for $\mem_{i,j} $ which we define to be the Dirac measure at $x$. Then \eqref{concp} takes the form
\begin{equation*} \rlac f =  \rladu f  +  [\rlac f (x)] \ell_\lam^{i,j}.  \end{equation*}
Since $\ell^{i,j}_\lam $ is supported in $S_i$ and $i\not =k$, it follows that $\rlac f(x) = \rladu f(x)$, and the formula simplifies to 
\begin{equation*} \rlac f =  \rladu f  +  [\rladu f (x)] \ell_\lam^{i,j}.  \end{equation*}
(The resulting resolvent is related to the following concatenation of processes: a particle that moves about initially  in $S_i$ and eventually exits this region by the $j$th gate, starts all over again at the $x \in S_k$ and moves according to the law  
known in $S_k$, until the process is possibly no longer defined. However, if the initial position is not in $S_i$ or the process does not exit through the $j$th gate, after its lifetime is over the process is not continued at all.)

A short calculation yields
\begin{equation} \label{shorcal}(\lam - \mu) \rmic \rlac f = (\lam - \mu) \bigl [\rmi\rladu f + [\rladu f(x)] \rmii \ell_\lam^{i,j} + [\rmidu \rlac f(x)] \ell_\lam^{i,j}\bigr ].\end{equation}
Furthermore, since $\ell_\lam^{i,j}$ is supported in $S_i$, so is $\rladu \ell_\lam^{i,j}$, and hence the relation
\[ \rmidu \rlac f = \rmidu \rladu f + [ \rladu f(x) ] \rmidu \ell_\lam^{i,j} \] proves that $\rmidu \rlac f(x) = \rmidu \rladu f(x)$ (because $x\in S_k$ and $S_k$ is disjoint with $S_i$). Thus, the right-hand side of \eqref{shorcal} differs from $\rmic f - \rlac f$ by 
\[ [\rladu f(x)] \left ((\lam - \mu) \rmii \ell_\lam^{i,j} + \ell_\lam^{i,j} - \ell_\mu^{i,j} \right ),\]
$\rladu, \lam >0$ being  a pseudoresolvent. 

Since, $\rlac, \lam >0$ is a pseudoresolvent also, to complete the proof it suffices to find an $f \in \cosi$ such that $\rmidu f(x)$ is non-zero for all $\mu >0$. To this end, let $f$ be non-negative and such that $f(x) >0$. Since the paths of the process $X_k$ with generator $A_k$ are right-continuous on a set of probability $1$, if we restrict ourselves to the paths starting at $x$, we will have $f(X_k(t))>0$ for sufficiently small $t \in [0,\infty)$, and for other $t$ we will have $f(X_k(t))\ge 0$. It follows that $\int_0^\infty \e^{-\lam t} f(X_k(t)) \ud t >0$ for all $\lam >0.$ Therefore, by the Fubini Theorem, 
\[\rladu f(x)  = E_x   \int_0^\infty \e^{-\lam t} f(X_k(t)) \ud t >0, \qquad \lam >0, \] 
as desired. 
\section{Examples of regular Feller boundary}\label{examples} 
In this section we discuss three examples of processes with regular Feller exit boundary composed of finite number of gates (identified with excessive functions), as defined in Definition \ref{defn:fb}. These processes are Brownian motions on $[0,\infty]$, $[0,r]$ where $r>0$, and star-like graphs, respectively, with rather general boundary conditions describing the gates. We will be able to provide explicit formulae for exit laws and related excessive functions that feature in these examples: in the first of them, there is one gate, in the second --- two gates, and in the third --- $\ka \ge 2$ gates.
Moreover, this section exemplifies that fact that Theorem \ref{thm5} together with the result of Section \ref{must} can be instrumental in finding formulae for exit laws and excessive functions.     
\subsection{Example A: $S=[0,\infty]$; one exit law.} \label{exa1}
Consider the space $C[0,\infty]$ of continuous functions on $\R^+=[0,\infty)$ with limits at infinity. Let $A$ be the operator $Af= \frac 12 f''$ with domain composed of twice continuously differentiable functions with $f'' \in C[0,\infty]$, satisfying the boundary condition \index{boundary conditions} \index{condition!boundary} 
\begin{equation} af''(0) - b f'(0) + cf(0) =0 ,\label{bc} \end{equation}
where $a,b$ and $c$ are given non-negative constants with $a + b +c >0.$

It is well-known that, as long as $a+b>0$, $A$ generates a Feller semigroup on $C[0,\infty]$, and that the resolvent $\rla = \rez{A}, \lam >0$ of $A$ is given by (see e.g. \cite{knigaz}*{pp. 17-18})   
\begin{equation*}
\rla f (x) = C \e^{\sqrt {2\lam} x } + D \e^{-\sqrt {2\lam} x } - \sqrt{\frac 2\lam} \int_0^x \sinh \sqrt {2\lam} (x-y) f(y) \ud y, \qquad x\ge 0,\end{equation*}
for all $f\in C[0,\infty]$, where $C=C(\lam,f)$ and $D=D(\lam,f) $ are constants defined as follows:
\begin{align*} C = \frac 1{\sqrt {2\lam} } \int_0^\infty \e^{-\sqrt {2\lam} y } f(y) \ud y, \nonumber \mquad{ }
D =  \frac{(b\sqrt {2\lam} - 2a\lam - c)C + 2af(0)}{2a\lam + b \sqrt {2\lam} + c}. \end{align*}
For $f = 1_{S}$, the constants are $C= \frac 1{2\lam}$ and $D= \frac 1{2\lam} \frac {2a\lam + b\sqrt{2\lam} -c}{2a\lam + b\sqrt{2\lam}+ c}$. This renders 
\[ \lam \rla 1_{S} = 1_S - \frac c{2a\lam + b\sqrt{2\lam} +c} e_{\sqrt{2\lam}}, \]
where $e_{\sqrt{2\lam}}(x) = \e^{-\sqrt{2\lam}x }, x \ge 0.$ 
 
Therefore, as long as $c>0$, we have $\lam \rla 1_{S} < 1_{S}$, that is, the resolvent is not conservative. In this case, the Laplace transform of the lifetime of the related process is 
\begin{equation}\dell = 1_{S} - \lam \rla 1_{S} =  \frac c{2a\lam + b\sqrt{2\lam} +c} e_{\sqrt{2\lam}}. \label{exa1:1} \end{equation}
Moreover, as it is easy to see, the kernel of $A$ is trivial, and thus we have the first example of a regular Feller boundary with (at least) one exit. An intuitive description of the related process tells us furthermore that this process can exit the state-space only through $0$, and therefore, there is precisely one exit here. 

We note also that in this example, the excessive function for the exit law is $1_{[0,\infty]}$: this corresponds to the fact that the process considered here with probability $1$ eventually hits $0$, regardless of its starting point.

Two sub-cases are of interest here. First of all, if $b=0$, the origin is a so-called \emph{elementary exit} (comp.  \cite{fellera1}*{p. 3} or  \cite{knigaz}*{p. 19}).  
The related process is a standard Brownian motion on the right half axis which, however, after reaching the origin is captured there for an exponential time with parameter $\frac c{2a};$  after this time is over, the process is no longer defined. 

Formula \eqref{exa1:1} confirms this description: $\frac c{2a\lam +c}$ is the Laplace transform of the exponential time with parameter $\frac c{2a}$, whereas $e_{\sqrt{2\lam}} (x)$ is the Laplace transform of the time needed for the standard Brownian motion starting at $x>0$ to reach the origin --- see \cite{ito}*{ Eq. 5) p. 26}, \cite{karatzas}*{Eq. (8.6) p. 96} or the original \cite{levybook}*{pp. 221-223}. 
 
Secondly, in the case of $a=0$, the boundary condition \eqref{bc} describes the \emph{elastic Brownian motion}. This is a process in the right half axis which initially behaves as if it were a reflected Brownian motion. However, the time it spends at the origin is measured by means of the local time of P.~L\'evy; when an exponential time with parameter $\frac cb$ passes with respect to this local time, the process is left undefined. This description is again confirmed by \eqref{exa1:1}, because  $\frac c{b\sqrt{2\lam} +c}$ is the Laplace transform of the distribution of  exponential time with parameter $\frac cb$  with respect to the L\'evy local time --- see \cite{ito}*{p. 45, Eq. 1}, that is, of the time elastic Brownian motion needs to exit the space after reaching the origin for the first time. 

If $a=b=0\not =c$, we encounter the \emph{minimal} Brownian motion --- here the Laplace transform of the time needed for the process starting at $x>0$ to exit the space is $\e^{-\slam x}$, that is, the Laplace transform of the time needed to reach the origin for the first time. Strictly speaking, though, $0$ is not in the state-space of the minimal Brownian motion, and the related semigroup is defined in the subspace $C_0(0,\infty]$  of $C[0,\infty]$ of $f $ from the latter space which satisfy $f(0)=0$.

For $a>0$, the origin is more `sticky' than in the case of $a=0$, and so the time process spends at the boundary is longer; in the case of $c=0$ the process is known as \emph{sticky} or \emph{slowly reflecting} Brownian motion --- see
\cite{kostrykin2010brownian}, \cite{liggett}*{p. 127} or \cite{revuz}*{p. 421}. For non-zero $c$, $\frac c{2a\lam + b\sqrt{2\lam} +c} $ is the Laplace transform of the time needed for the related process to exit the space after reaching the boundary for the first time.

\subsection{Example B: $S=[0,r]$; two exits (with exceptions)}\label{exa2}
Let $S=[0,r]$ be a closed interval of length $r>0$, and let $A$ be the operator in $\coss$ given by $Af=\frac 12 f''$ on the domain composed of $f\in \coss$ that are twice continuously differentiable and such that
\begin{equation}\label{exa2:1} p f''(0) - (1-p) f'(0) + c f(0) =0 \mquad {and} q f''(r) + (1- q) f'(r) + d f(r) = 0,
 \end{equation} 
where $p,q\in [0,1]$ and $c,d\ge 0.$ 
To explain, the condition on the left is a new incarnation of \eqref{bc}: take $p=\frac a{a+b}$ and redefine $c$ as $\frac c{a+b}$. The condition on the right is of similar nature.

From this description it is clear that, as long as $c$ and $d$ are larger than $0$, there are two exits here. We will find the corresponding exit laws and excessive functions with the help of Theorem \ref{thm5} and the result of Section \ref{must}. Since  these excessive functions turn out to be continuous and add up to $1_{[0,r]}$, the process considered here has a Feller exit boundary.  

Before continuing, we note that a standard argument (see e.g. \cite{knigaz}*{p. 17}) can be used to show that $A$ satisfies the positive maximum principle and is densely defined. The calculations presented below prove also that it satisfies the range condition, and thus is a generator of a Feller semigroup. The same remark applies to $\aco$ introduced below. 

\subsubsection{Auxiliary concatenation}\label{aco}
To find exit laws for $A$, we think of $S$ as of $S_1$ in Theorem \ref{thm5} and add an auxiliary space $S_2$ composed of two points: $S_2 \coloneqq \{o_1,o_2\}$ ($o_1,o_2\not \in [0,r]$) and equip $S_2$ with discrete topology.  An auxiliary process in $S_2$ is trivial, that is, both points are absorbing: the process starting at one of these points stays there for ever. In other words, its generator is $A_2=0$. We concatenate these two processes by requiring that (a) when the process related to $A_1=A$ exists $[0,r]$ through the boundary point $0$, it jumps to $o_1$ to stay there for ever, and (b) when the process exists through $r$, it jumps to $o_2$ to stay there for every.  

The generator $\aco$ of the concatenated process in $\suu = [0,r]\cup \{o_1,o_2\} $ is defined as follows. Its domain is composed of $f$ which, as restricted to $[0,r]$, are twice continuously differentiable,  and satisfy the following transmission conditions:
\begin{align}\nonumber 
p f''(0) - (1-p) f'(0) + c f(0) &= c f(o_1), \\ \label{exa2:2} q f''(r) + (1- q) f'(r) + d f(r) &= d f(o_2).
 \end{align} 
Also, $\aco f (x)= \frac 12 f''(x), x\in [0,r],$ whereas $\aco f(o_1)=\aco f(o_2)=0$.

Next, we solve the resolvent equation for $\aco$: given $g \in C(S_1\dot\cup S_2)$ and $\lam >0$ we find an $f  \in \dom{\aco}$ such that $\lam f - \aco f = g.$ The part of this equation in $C(S_2)$ is very simple and we immediately realize that $\lam f(o_i) = g(o_i), i=1,2.$ 
To solve the other part we search for $f$ of the form
\begin{equation}\label{exa2:3}  f (x) = C \e^{\sdlam x} +   D \e^{-\sdlam x} + h_\lam  (x), \qquad x \in [0,r] \end{equation}
where $h_\lam(x)\coloneqq \frac 1{\sdlam} \int_0^r \e^{-\sdlam |x-y|} g(y) \ud y$,
such that transmission conditions 
\eqref{exa2:2} are satisfied. In fact, for reasons that will become clear in a moment, we will find a unique $f$ satisfying slightly more general conditions:
\begin{align}
\nonumber p f''(0) - (1-p) f'(0) + c f(0) &= \widetilde c f(o_1), \\  q f''(r) + (1- q) f'(r) + d f(r) &= \widetilde d f(o_2),\label{exa2:4}
 \end{align} 
where $0 \le \widetilde c\le c$ and $0\le \widetilde d \le d.$

\subsubsection{A linear system for $C$ and $D$}\label{alsf} We note that \eqref{exa2:4} is fulfilled iff $C$ and $D$ of \eqref{exa2:3} 
are solutions to the system of linear equations:
\begin{align*}
a_{1,1} C + a_{1,2} D & = b_1 + \widetilde c f(o_1), \\
a_{2,1} C + a_{2,2} D & = b_2 + \widetilde d f(o_2),\end{align*}
where 
\begin{align}
a_{1,i} & \coloneqq 2\lam p  + (-1)^i \slam  (1-p) + c ,\label{exa2:4a}\\
a_{2,i} & \coloneqq \e^{(-1)^{i+1} \slam r} (2\lam q  - (-1)^i \slam  (1-q) + d), \qquad i=1,2,\nonumber \end{align}
and  (since $h_\lam'(0) = \slam h_\lam (0) $ and $h_\lam'(r) = -\slam h_\lam (r)$)
\begin{align}
b_1 &\coloneqq 2p g(0) - (2\lam p - \slam (1-p) + c) h_\lam (0), \nonumber \\
b_2 &\coloneqq 2q g(r) - (2\lam q - \slam (1-q) + d) h_\lam (r).\label{exa2:4b} \end{align} 
It is clear that $|a_{1,1}|\le a_{1,2}$ (with equality for $p=1$), $|a_{2,2}|< a_{2,1}$ and that neither of $a_{1,2}$ and $a_{2,1}$ is zero. Hence, the main determinant $\wyr$ of the system is negative, and the system has a unique solution. 

\subsubsection{Two exit laws} It follows that 
\begin{align} \nonumber  
\rlac g (x)& = \frac 1 \wyr \begin{vmatrix}b_1& a_{1,2} \\ b_2 & a_{2,2}  \end{vmatrix} \e^{\slam x} + \frac 1\wyr  \begin{vmatrix}a_{1,1}& b_{1} \\ a_{2,1} & b_{2}  \end{vmatrix} \e^{-\slam x} + h_\lam (x)  \\ &\phantom{=}+ \frac 1\wyr  \begin{vmatrix}\widetilde c f(o_1) & a_{1,2} \\ \widetilde d f(o_2) & a_{2,2}  \end{vmatrix} \e^{\slam x}  + \frac 1\wyr  \begin{vmatrix}a_{1,1}& \widetilde c f(o_1) \\ a_{2,1} & \widetilde d f(o_2)  \end{vmatrix} \e^{-\slam x}. \label{exa2:5}
 \end{align} 
The first three terms here correspond to the case of $\widetilde c = \widetilde d =0$, that is, provide the formula for the resolvent of $A$. The remaining two may be written as
\[ \frac {\widetilde c} \wyr (a_{2,2} \e^{\slam x} - a_{2,1} \e^{-\slam x})  f(o_1)+   \frac {\widetilde d} \wyr  (a_{1,1} \e^{-\slam x} - a_{1,2} \e^{\slam x})  f(o_2).  \] 
Since 
\[ f(o_i) = \frac{g(o_i)}{\lam } = \rlai g(o_i) = \rlac g(o_i) , \qquad i=1,2,\]
formula \eqref{exa2:5} is a particular case of  \eqref{concp} with $N=2, M=1, \ka (1) = 2,\mem_{1,1}=\frac {\widetilde c} c\delta_{o_1} $ and $\mem_{1,2}=\frac {\widetilde d} d\delta_{o_2}$: 
\[ \rlac g = \rladu g + {\textstyle\frac {\widetilde c}c} [\rlac g(o_1)] \ell_\lam^1 +  {\textstyle \frac {\widetilde d}d} [\rlac g(o_2)] \ell_\lam^2  \]
where 
\begin{align}
\ell_\lam^1 (x) & \coloneqq \frac c\wyr  (a_{2,2} \e^{\slam x} - a_{2,1} \e^{-\slam x}),\nonumber \\
\ell_\lam^2 (x) & \coloneqq \frac d \wyr  (a_{1,1} \e^{-\slam x} - a_{1,2} \e^{\slam x}), \qquad x \in [0,r]. \label{exa2:5a}
\end{align}
Above, we write $\ell_\lam^1$ and $\ell_\lam^2$ instead of $\ell_\lam^{1,1}$ and $\ell_\lam^{1,2}$ for simplicity and, at the same time, to comply with notation of Section \ref{elae}.

The argument of Section \ref{must} is in force here and we conclude that $\ell_\lam^j, j=1,2$ are exit laws for $A$. We also note that the resolvent of $A$ can be expressed in terms of these laws as follows: 
\begin{equation}\label{exa2:5b} \rez{A} g = \frac{b_1}c \ell_\lam^1 + \frac{b_2}d \ell_\lam^2 + h_\lam . \end{equation}   

\subsubsection{Two excessive functions} 
To find the excessive functions corresponding to exit laws \eqref{exa2:5a}, we let 
\[ \nad^j (x) \coloneqq \lim_{\lam \to 0+} \ell_\lam^j (x), \qquad j =1,2, \]
and write 
\begin{equation}\label{exa2:5c} \ell_\lam^1 = c \frac {\wyr_1}{\wyr} \mquad{ and }  \ell_\lam^2 = d \frac {\wyr_2}{\wyr}\end{equation}
where
\begin{equation} \wyr_1 \coloneqq \begin{vmatrix} \e^{\slam x} & \e^{-\slam x}\\
a_{2,1} & a_{2,2} \end{vmatrix} \mquad { and }  \wyr_2 \coloneqq  \begin{vmatrix} a_{1,1} & a_{1,2} \\[4pt] \e^{\slam x} & \e^{-\slam x} \end{vmatrix}. \label{exa2:6}  \end{equation}
Then formula \eqref{aff:1} in Appendix shows that 
\begin{align}
\nad^1 (x) &= c \frac{d(r-x) + 1-q}{ cdr + (1-q) c + (1-p) d}, \nonumber \\
\nad^2 (x) &= d \frac{cx + 1-p}{ cdr + (1-q) c + (1-p) d}, \qquad x \in [0,r]. \label{exa2:7} \end{align} 
For $p=q=1$ (this is the case of elementary exits from both interval's endpoints) these quantities are familiar probabilities that a Brownian motion starting at $x\in [0,r]$ hits $0$ before hitting $r$, and vice versa, respectively. In view of the well-known relationship between excessive functions and exit probabilities (see \cite{doobc}*{p. 565}), in general these are apparently the probabilities of exiting through $0$ and $r$, conditional on the process generated by $A$ starting at $x$. Notably, 
\[ \nad^1 + \nad^2 =  1_{[0,r]},\]
and both $\nad^j, j=1,2$ are continuous functions: with probability one the process eventually exits either through the left-end or the right-end of the interval. It is also quite easy to see that the kernel of $A$ is trivial. 
Thus, as heralded before, the process considered here has Feller exit boundary with (at least) two exits --- it is the intuitive description of the related process that tells us that there are precisely two exits here.

\subsubsection{A particular case}\label{apc} For $p=q=1$ (two elementary exits) formulae for excessive functions simplify as follows: 
\begin{align*} \ell_\lam^1 (x) &=  \frac {r-x} r \left ( \frac r{r-x} \frac{\sinh \sdlam (r-x)}{\sinh \sdlam r} \right ) \frac c{2\lam +c }, \qquad x \in [0,r), \\
\ell_\lam^2 (x) &= \frac xr \left ( \frac r{x} \frac{\sinh \sdlam x}{\sinh \sdlam r} \right ) \frac d{2\lam +d}, \qquad x\in (0,r], \end{align*}
and these relations have a clear interpretation. To wit, the first factor in the formula for $\ell_\lam^1(x)$ is the probability that a standard Brownian starting at $x\in [0,r]$ will reach $0$ before reaching $r$. The second is the Laplace transform of the time needed for the standard Brownian motion to reach $0$, conditional on reaching it before reaching $r$ --- see \cite{karatzas}*{p. 100}. The third is the Laplace transform of the exponential time the process generated by $A$ spends at $0$ before exiting the state-space. The formula for $\ell_\lam^2$ is interpreted similarly. 

\subsubsection{Another particular case} Another case of interest is $p=1$ and $q=0$ (elementary exit on the left, elastic boundary at the right). Here, 
\begin{align*} \ell_\lam^1 (x) &=  \frac{\slam \cosh \sdlam (r-x)}{d\sinh \sdlam r + \slam \cosh \slam r} \frac c{2\lam +c }, 
\\
\ell_\lam^2 (x) &=  \frac{\sinh \sdlam x}{d\sinh \sdlam r + \slam \cosh \slam r}, \qquad x\in [0,r]. \end{align*}
It is interesting to note how the change in the nature of the boundary at $r$ influences the exit at the left. In the new scenario, a particle reaching $r$ for the first time is not captured there for an exponential time as in Section \ref{apc}, but is reflected and can thus exit through $0$. Hence, the probability of exiting through $0$, equaling 
$\nad^1(x) = \frac {1+d(r-x)}{1+dr} $ is larger than that obtained in Section \ref{apc}.

\subsubsection{The `degenerate' case: one exit law}\label{tdg} If either $c=0$ or $d=0$ (but not both), the related process has one exit law. For instance, we consider the case of elastic boundary at $0$ and reflecting boundary at $r$, that is, the case where $p=q=d=0$ whereas $c\not =0.$ Then, $\ell_\lam^2 (x) $ reduces to zero, reflecting the fact that the process cannot exit $[0,r]$ through the interval's right end. Moreover, 
\[ \ell_\lam^1 (x) = \frac {\cosh \slam (x-r)}{ \cosh \slam r} \frac {c \cosh \slam r}{c \cosh \slam r + \slam \sinh \slam r}, \qquad x\in [0,r]. \]
The first factor here is the Laplace transform of the distribution of the time needed for the Brownian motion starting at $x\in (0,r)$ and reflected at $x=r$ to reach $x=0$ for the first time (see Appendix \ref{ajeden}). Also, the second factor is completely monotone  (see Appendix \ref{adwa}) and its value for $\lam =0$ is $1$, proving that it is the Laplace transform of a Borel probability measure on $[0,\infty)$. The second factor thus  should be interpreted as the Laplace transform of the time needed to exit $[0,r]$ after reaching $x=0$ for the first time.   

Since $\lilz \ell_\lam ^1(x) = 1$, this exit law corresponds to the excessive function $1_{[0,r]}$, that is,  $\ell_\lam^1 $ coincides with $\dell .$

\newcommand{\stargraphs}[3]{\begin{tikzpicture}[scale=0.2]
      \node[circle,fill=black,inner sep=0pt,minimum size=4pt] at (360:0mm) (center) {};
    \foreach \n in {1,...,#1}{
        \node [circle,fill=black,inner sep=0pt,minimum size=3pt] at ({\n*360/#1}:#2cm) (n\n) {};
        \draw [dashed] (center)--(n\n);} 
    \foreach \n in {#3,...,#1}{
    \draw (center) -- (n\n);}
\end{tikzpicture}}
\newcommand{\stargraph}[2]{\begin{tikzpicture}
      \node[circle,fill=black,inner sep=0pt,minimum size=2pt] at (360:0mm) (center) {};
    \foreach \n in {1,...,#1}{
        \node [circle,fill=blue,inner sep=0pt,minimum size=3pt] at ({\n*360/#1}:#2cm) (n\n) {};
        \draw (center)--(n\n);} 
\end{tikzpicture}}
\newcommand{\stargraphn}[2]{\begin{tikzpicture}
  \node[circle,fill=white,inner sep=0pt,minimum size=3pt] at (360:0mm) (center) {};
    \foreach \n in {1,...,#1}{
        \node [circle,fill=black,inner sep=0pt,minimum size=2pt] at ({\n*360/#1}:#2cm) (n\n) {};
        \draw (center)--(n\n);} 
\end{tikzpicture}}

\subsubsection{A black sheep: representation in terms of excessive functions is not unique}\label{sheep} 
The case of $p=q=1$, $d=0$ and $c>0$ is somewhat exceptional (together with the analogous case of $p=q=1$, $c=0$ and $d>0$). Here, we have again only one exit, but representation in terms of excessive functions is not unique. 

First of all, as in the previous section, in this scenario $\ell_\lam ^2(x)$ reduces to zero. Moreover, 
\[ \ell_\lam^1(x) = \frac {c\sinh\slam (r-x)}{(2\lam +c) \sinh \slam r }, \qquad x \in [0,r]. \]
Since $\lilz \ell_\lam^1 (x) =1$ for all $x\in [0,r]$, we see that, again, $\ell_\lam^1 =\dell .$ However, for any $\alpha \in \R$, the function $f(x) =\alpha x , x \in [0,r]$ belongs to the kernel of $A$. As a result,  for any $\alpha \in [0,\frac 1r]$,
\[ \ell_\lam^1 = \nad^{\alpha} - \lam \rla \nad^\alpha\]
where 
\[ 0\le \nad^\alpha (x) \coloneqq 1- \alpha x\le 1, \qquad x\in [0,r]. \]

\subsubsection{A remarkable property of exit laws}\label{remarkable}
Coming back to the general case and $c,d>0$ we note the following remarkable properties of exit laws. To begin with, we note 
that (derivatives with respect to $x$)
\begin{equation} \label{exa2:8} (\ell_\lam^j)'' = 2\lam \ell_\lam^j , \qquad j=1,2.\end{equation}
Next, we  calculate:
\begin{align*} p(\ell_\lam^1)'' (0) &- (1-p)(\ell_\lam^1)'(0) + c \ell_\lam^1 (0) \\ &= \frac {c}{\wyr}((2\lam p +c)(a_{2,2} - a_{2,1}) - (1-p)\slam (a_{2,2} - a_{2,1})) \\&= \frac {c}{\wyr}(a_{1,1}a_{2,2}-a_{2,1} a_{1,2}) = c. \end{align*}
At the same time
\begin{align*} p(\ell_\lam^2)'' (0) &- (1-p)(\ell_\lam^2)'(0) + c \ell_\lam^1 (0) = \frac {c}{\wyr}(a_{1,1}a_{1,2}-a_{1,2} a_{1,1}) = 0. \end{align*}
Introducing $F_1:C^2[0,r]\to \R$ given by $F_1f\coloneqq p f''(0) -(1-p) f'(0) + cf(0)$ we summarize the `algebraic' nature of exits laws at the interval's left end as follows:
\begin{equation} \label{exa2:9}
F_1 \ell_\lam^j = \delta_{1,j} c, \qquad j =1,2,\end{equation}
where $\delta_{1,j}$ is the Kronecker delta. 
There is also a counterpart of these relations at the right end: 
\begin{align} 
F_2 \ell_\lam^j = \delta_{2,j} d, \qquad j =1,2\label{exa2:10}
\end{align}
where $F_2 f\coloneqq q f''(r) +(1-q) f'(r) + df(r).$ 
It is worth noting that, \eqref{exa2:8}--\eqref{exa2:10} in fact determine $\ell_\lam^1$ and $\ell_\lam^2$ uniquely.

We also remark that quantities $b_1$ and $b_2$ of \eqref{exa2:4b} are equal to $-F_1(h_\lam)$ and $-F_2(h_\lam)$, respectively. This together with $\lam h_\lam - \frac 12h_\lam = g$ and \eqref{exa2:8}--\eqref{exa2:10} yields an easy proof of \eqref{exa2:5b}. 

We hasten to add that the properties of exit laws discussed above are not characteristic to this particular example merely. For the process  considered in Section \ref{exa1} we similarly have $(\dell)'' = 2\lam \dell$ and $F\dell =c $ where $Ff \coloneqq af''(0)- bf'(0) + c.$ Likewise, for $\ell_\lam^1, \lam >0$ of Section \ref{tdg} we have $(\ell_\lam^1)''=2\lam \ell_\lam^1$ and $-(\ell_\lam^1)' (0) + c\ell_\lam^1 (0) = c.$ 

\subsection{Example C: Walsh-type Brownian motion on a star-like graph}\label{exa3}
Let $r>0$ and a natural number $\ka \ge 2$ be given. Moreover, let $S$ be the union of $\ka$ copies of the interval $[0,r]$, with all left ends identified. In other words, $S$ is the star-like graph $K_{1,\ka}$. Continuous functions on $S$ can be identified with continuous functions $f$ on \[ \widetilde S\coloneqq \bigcup_{j\in \kad}\{j\} \times [0,r], \qquad \text{where } \kad \coloneqq \{1,\dots, \ka\}, \] 
that satisfy $f(j,0)=f(k,0), j,k\in \kad $; their common value at $0$ will be denoted $f(0)$. It will also be handy to write $f\in \coss$ as a sum of $f_j\in C[0,r]$ such that $f_j(0)=f_k(0), j,k\in \kad :$
\[ f = \dot{\sum_{j\in \kad}} f_j.\]  
Let $\beta\in [0,1)$ and $\alpha_j, j \in \kad$ be non-negative numbers such that \[ \beta + \sum_{j\in \kad} \alpha_j  =1 .\] Also, let $q \in [0,1]$ and $d>0$ be given. We define an operator $A$ in $\coss$ as follows. Its domain is composed of $f\in \coss $ such that 
\begin{itemize} 
\item [(a)] $f \in C^2(S);$ that is, for each $j \in \kad $, $f_j$ is twice continuously differentiable on $[0,r]$ and 
\begin{equation}\label{exa3:1} f'' \coloneqq \dot{\sum_{j\in \kad}} f_j'' \end{equation}
belongs to $\coss$,
\item [(b)] for each $j\in \kad$, 
\[ qf_j''(r) + (1-q) f'_j (r) + df_j(r) = 0,\]
\item [(c)] and
\[ \beta f''(0) = \sum_{j\in \kad} \alpha_j  f_j'(0)\]
(note that $f''(0)$ is well-defined by (a), even though $f'_j(0)$ depends on~$j$). 
\end{itemize}
Moreover, we agree that
\[ Af = {\textstyle \frac 12} f''. \] 

Boundary condition (b) is of the same form as the second relation in \eqref{exa2:2}, and thus says that each `outer' node of the graph is an exit point for the process generated by $A$. Transmission condition (c) describes the law governing the process at the graph's center. If $\beta$ is zero, this condition is known to describe the Skew Brownian Motion
of J. Walsh --- see \cite{lejayskew}*{p. 45 eq. (57)}, consult also \cite{yor97}*{p. 107}  and \cite{manyor}*{pp. 115-117}. 
In this process, a particle reaching the center of the graph chooses to continue its further motion on $j$th vertex with `probability' $\alpha_j$ (although not quite) --- see the already cited \cite{lejayskew} for a more precise description and many equivalent constructions of this process;  in particular, transmission condition (c) with $\beta=0$ can be obtained in the limit procedure of Friedlin and Wentzell's averaging principle \cite{fwbook} --- see \cite{fw}*{Thm. 5.1}, comp. \cite{emergence}*{Eq. (3.1)}. In general, when $\beta$ is non-zero, the center is more `sticky' and so the process stays at this point `longer', but no probability mass is lost there (if $\beta$ were allowed to be $1$, the 
graph's center would be an absorbing point). In its more general form (i.e., with $\beta\not =0$), condition (c) appears also e.g. in \cite{kostrykin2012}*{Eq. (2.5)}, with yet one more term, describing loss of probability mass.  

From this intuitive description it is clear that there are precisely $\ka$ exits here. We will argue that $A$ is a Feller generator, and calculate the corresponding Laplace transforms of exit laws and the related excessive  functions. 

\subsubsection{The positive maximum principle for $A$}\label{pmp}
It is rather easy to be persuaded that $A$ is densely defined. Turning to the question of the positive maximum principle, we note that if $f\in \dom{A}$ attains its positive maximum at a point $x$ different from the nodes of $S$, we obviously have $Af(x)=\frac 12 f''(x)\le 0$. The already recalled argument from \cite{knigaz}*{p. 17} shows also that $Af(x)\le 0$ if $x$ is one of the `outer' nodes. Finally, if $x$ is the graph's center, we clearly have $f'_j(0)\le 0$ for all $j\in \kad$. Hence, if $\beta >0$, we have $Af(0)= \frac 12 f''(0)= \frac 1{2\beta}  \sum_{j\in \kad} \alpha_j  f_j'(0)\le 0.$ If $\beta=0$, (c) requires that $f_j'(0)=0$ as long as $\alpha_j>0$. Since there is at least one $\alpha_j>0$, for at least one $j$ we have $f_j'(0)=0$, and it follows that for this $j$, $f''_j(0)\le 0$. But $f$ belongs to $\dom{A}$, and so $f_j''(0)$ does not depend on $j$ and we have $f''(0) = f_j''(0), j\in \kad$, proving that $Af(0)\le 0$.   
To check that $A$ is a Feller generator, it suffices therefore to prove that it satisfies the range condition. This will be a by-product of the analysis presented in the next section.

\subsubsection{The resolvent equation for $A$ and $\aco$}
We proceed as in Section \ref{aco}: we concatenate the process generated by $A$ with the trivial process on 
\[ S_2 \coloneqq \{o_1,\dots , o_\ka\} \]
(for $o_j\not \in S, j \in \kad $), and from an explicit form of the resolvent of the concatenated process read off the exit laws we search for. As a by-product, our calculations (with $\widetilde d_j =0, j\in \kad$, see below) provide the proof that $A$ satisfies the range condition and is thus a Feller generator. 

Given additionally $ \widetilde d_j\in [0, d], j \in \kad $,  we define an operator $\aco$ in $\cosu$, where $\suu \coloneqq S_1 \dot{\cup} S_2$ and $S_1\coloneqq S$, as follows. Its domain is composed of   
\[ f= \dot{\sum_{j\in \kad}} f_j \dot{+} f_{|S_2}\in \cosu  \] 
 such that 
 \begin{itemize}
\item [(a)] $f_{|S} \in C^2(S)$, 
\item [(b)] for each $j\in \kad$, 
\[ qf_j''(r) + (1-q) f'_j (r) + df_j(r) = \widetilde d_j f(o_j),\]
\item [(c)] and
\[ \beta f''(0) = \sum_{j\in \kad} \alpha_j  f_j'(0).\]
\end{itemize}
For such $f$, we define $\aco f = \frac 12 f_{|S}''$ (in particular, $A(f_{|S_2})= 0$). 

\newcommand{\hlai}{h_{\lam,i}}
\newcommand{\hlaj}{h_{\lam,j}}
Our claim is that $\aco$ is a Feller generator in $\cosu$. The related process, if started in $S$, obeys the rules dictated by $A$; when it exits $S$ through the $j$th gate, it jumps to  $o_j$ to stay there for ever.

Arguing as in Section \ref{pmp}, we check that $\aco$ (is densely defined and)
satisfies the positive maximum principle (this is where condition $0\le \widetilde d_j \le d$ is used). Hence, we are left with showing that $\aco $ satisfies the range condition.  

To this end, given $\lam >0$ and $g\in \cosu$ we look for $f \in \dom{\aco}$ satisfying $\lam f - \aco f =g$ 
of the form: 
\begin{align} \nonumber f(o_j) &= \lam^{-1} g(o_j), \\
\label{exa3:2}  f_j(x) & = C_j \e^{\sdlam x} +   D_j \e^{-\sdlam x} + \hlaj  (x), \qquad x \in [0,r], j \in \kad,  \end{align}
where 
\begin{equation}\hlaj (x)\coloneqq \frac{2\beta g(0)}{\slam(\beta - 1)}\sinh\slam x - \sqrt{\frac 2{\lam}} \int_0^x \sinh \sdlam (x-y) g_j(y) \ud y, \label{exa3:2a}\end{equation}
for $ j \in \kad,  x\in [0,r]$, and the constants $C_j$ and $D_j$ are to be found. We also introduce functionals $F,F_j$  given by 
\begin{align*} F f &\coloneqq \beta f''(0) - \sum_{j\in \kad} \alpha_j f_j'(0) \\
F_j f &\coloneqq q f_j''(r) + (1-q) f_j'(r) + d f_j(r) ,\qquad j \in \kad, \end{align*}
on the common domain of $f$ such that $f_{|S}\in C^2(S)$.
Hence, $f\in \dom{\aco}$ iff $f_{|S} \in C^2(S)$, $Ff=0$ and $F_jf  =\widetilde d_j f(o_j), j \in \kad$.

To continue, $ \dot\sum_{j\in \kad}f_j\in C(\widetilde S)$ belongs to $\coss$ iff 
\begin{equation}\label{exa3:3} C_j +D_j  \quad \text{ does not depend on }j \in \kad . \end{equation}
Since $\lam f_j - \frac 12 f_j''= g_j, j \in \kad$, this is also a necessary and sufficient condition for $f''$ to belong to $\coss$. If  \eqref{exa3:3} holds, the quantity featured in  it is $f(0)$, and we obtain
\begin{equation}\label{exa3:3a} C_j = f(0) - D_j, \qquad j \in \kad .\end{equation}
Next, as in Section \ref{alsf} we check that $F_j f = \widetilde d_j f(o_j)$ iff 
\begin{align*}
a_{2,1} C_j + a_{2,2} D_j  =  \widetilde d_j f(o_j)- F_j h_\lam, \qquad j \in \kad \end{align*}
where $a_{2,i}, i=1,2$ are defined in \eqref{exa2:4a}, and \begin{equation}\label{exa3:3b} h_\lam \coloneqq \dot \sum_{j\in \kad} \hlaj .\end{equation}
These relations together yield 
\begin{equation}\label{exa3:4} (a_{2,2} - a_{2,1}) D_j =   \widetilde d_j f(o_j) - F_jh_\lam - a_{2,1}f(0), \qquad j \in \kad.  \end{equation}
Finally, (c) of the definition of $\dom{\aco}$ holds iff 
\begin{equation}\label{exa3:5} \beta (2\lam f(0) - 2g(0)) = \sum_{j\in \kad} \slam \alpha_j (C_j-D_j).\end{equation}
To solve the system  \eqref{exa3:3a}, \eqref{exa3:4}--\eqref{exa3:5} for $C_j$ and $D_j, j\in \kad$, we note first that the right hand-side in \eqref{exa3:5} equals 
\begin{align*}    
\slam \sum_{j\in \kad}  \alpha_j (f(0) -2 D_j) = \slam (1-\beta) f(0) - 2\slam  \sum_{j\in \kad} \alpha_j D_j, 
\end{align*}
and that, by  \eqref{exa3:4}, 
\[ (a_{2,2} - a_{2,1})\sum_{j\in \kad} \alpha_j D_j= \sum_{j \in \kad}  \alpha_j(\widetilde d_j f(o_j)-F_jh_\lam) - a_{2,1}(1-\beta) f(0).\]
Therefore,
\begin{equation}\label{exa3:5aa} f(0) = \frac {2\beta}{m_\lam}g(0) + \frac {2\slam}{m_\lam (a_{2,1}-a_{2,2})} \sum_{j\in \kad} \alpha_j (\widetilde d_jf(o_j)- F_jh_\lam), \end{equation}
where 
\[ m_\lam\coloneqq 2\beta \lam + \slam (1-\beta) \frac{a_{2,1}+a_{2,2}}{a_{2,1}- a_{2,2}}.\]  
Once $f(0)$ is known, $D_j$s can be calculated from  \eqref{exa3:4} and then $C_j$s can be calculated from  \eqref{exa3:3a}.
The fact that the so-derived $C_j$s and $D_j$s solve the system \eqref{exa3:3}--\eqref{exa3:5} with $\widetilde d_j = d, j \in \kad $ shows that \eqref{exa3:2}
 defines a solution (which is necessarily unique) to the resolvent equation  for $\aco$. The same calculation with all $\widetilde d_j$s equal to $0$ proves that $A$ satisfies the range condition also.  
\subsubsection{Exit laws for $A$}
Fix $j\in \kad.$ The $f_j$ of \eqref{exa3:2} with constants calculated from \eqref{exa3:3a}, \eqref{exa3:4}--\eqref{exa3:5}  differs from the $f_j$ with constants calculated from this system with  $\widetilde d_k =0, k \in \kad$ by 
\[ \widetilde d_j \frac{ 2\sinh\slam x}{a_{2,1} - a_{2,2}}f(o_j) + \frac {2\slam (a_{2,1}\e^{-\slam x} - a_{2,2}\e^{\slam x})}{m_\lam (a_{2,1}-a_{2,2})^2} \sum_{k \in \kad} \alpha_k \widetilde d_k f(o_k). \]
Hence, the resolvent of $\aco$ has the form 
\[ \rlac g  = \rladu g + \sum_{j\in \kad } {\textstyle \frac{\widetilde d_j}{d}} [\rlac g(o_j)] \ell_{\lam}^j,\]
where $\ell_\lam^j\in \coss$ is defined by its parts $(\ell_\lam^i)_k \in C[0,r], k \in \kad $ as follows
\begin{equation} (\ell_\lam^j)_k (x) = \frac{ 2d \sinh\slam x}{a_{2,1} - a_{2,2}} \delta_{j,k} + \frac {2d\slam (a_{2,1}\e^{-\slam x} - a_{2,2}\e^{\slam x})}{m_\lam (a_{2,1}-a_{2,2})^2} \alpha_j ,\label{exa3:5a} \end{equation}
and $\delta_{j,k}$ is the Kronecker delta. 

As in Section \ref{remarkable}, we note that 
\[ (\ell_\lam^j)'' = 2\lam \ell_\lam^j, \qquad j \in \kad ,\]
and
\begin{align*} F_k \ell_\lam^j  &= \frac{d (a_{2,1}- a_{2,2})}{a_{2,1}- a_{2,2}} \delta_{j,k} + \frac {2d\slam (a_{2,2}a_{2,1} - a_{2,2}a_{2,1})}{m_\lam (a_{2,1}- a_{2,2})^2} \alpha_k \\ &= d\delta_{j,k}, \qquad   \qquad   \qquad   \qquad   \qquad   \qquad    \qquad    \qquad   \qquad   \qquad   \qquad   j,k\in \kad. \end{align*}
A bit longer calculation (see Appendix \ref{puff}) shows that 
\begin{equation} F\ell_\lam^j = 0, \qquad j \in \kad .\label{exa3:6a} \end{equation}
Together with the observation that  $F h_\lam =0$ ($h_\lam $ was defined in \eqref{exa3:3b}) these relations lead to the following formula for $\rez{A}$ in terms of exit laws (comp. \eqref{exa2:5b}): 
\begin{equation}\label{exa3:7} \rez{A} g = h_\lam  - d^{-1}\sum_{j\in \kad } (F_jh_\lam) \ell_\lam^j  .\end{equation}

\subsubsection{Excessive functions for $A$}
Using the formula in Appendix \ref{aff}, it can be seen that
\begin{align*} 
\lilz \frac{a_{2,1}\e^{-\slam x} - a_{2,2}\e^{\slam x}} {2\slam } & = d(r-x) + 1-q,\\ 
\lilz {m_\lam} &= \frac{d(1-\beta)}{rd + 1-q}.
\end{align*}
Therefore, as long as $\beta <1$, the excessive function corresponding to the exit through  the $j$th outer node  is $ \nad^j \coloneqq \dot{\sum}_{k\in \kad}(\nad^j)_k$ where  
\begin{equation}\label{exa3:6} (\nad^j)_k (x) \coloneqq  \lilz (\ell_\lam^j)_k (x) = \frac {dx}{rd + 1-q} \delta_{j,k} + \frac{d(r-x)+ 1-q}{rd + 1-q} \frac {\alpha_j}{1-\beta}, \end{equation}
 for $x \in [0,r]$. It is clear that $(\nad^j)_k(0)$ does not depend on $k$, proving that $\nad^j$ belongs to $\coss.$ 
Since, as we shall see in the next section, in the case of $\beta<1$, the kernel of $A$ is trivial, we conclude that 
\[ \ell_\lam^j = \nad^j - \lam \rla \nad^j, \qquad j \in \kad .\]
To interpret \eqref{exa3:6}, we note that the quantity $\frac {dx}{rd + 1-q}$ coincides with the right-hand side in the second equation in \eqref{exa2:7} with $p=1$. Since in the case of $p=1$ the left interval's end in the  example of Section \ref{exa2} is absorbing, this quantity is the probability that a particle performing a Brownian motion on $[0,r]$ with right-end boundary described by the second equation in \eqref{exa2:1} exits through this boundary without touching $0$ in the meantime. Since the nature of the boundary at the $i$th node is the same as in the process just described (see (b) in the definition of $\dom{A}$), $\frac {dx}{rd + 1-q}$ is the probability that a Walsh-type Brownian motion starting at the point $x$ at the $j$th node will exit through this node's end without ever going through the graph's center. On the other hand, $\frac{d(r-x)+ 1-q}{rd + 1-q}$ is the probability that a particle starting from this $x $ will reach the graph's center before exiting from the state-space (necessarily through the $i$th node). Finally, $\frac {\alpha_k}{1-\beta}$ is the probability that a process starting from the graph's center will exit the graph through its $k$th node. 

It is an interesting feature of this example that even though all the outer nodes, that is, all the exits, have the same `local' nature (being described by a boundary condition with the same coefficients), the related exit laws differ --- see the formulae for $\ell_\lam^j, \lam >0$ and $\nad^j, j \in \kad. $  The reason for that lies in the graph's center --- because of that point the process has a tendency to sojourn in some vertices more often than in other vertices, and thus exits through one node  more often than through the other. 

\subsubsection{Triviality of the kernel} Finally, we show that, as long as $\beta\in [0,1)$, the kernel of $A$ is trivial so that, in particular, the representation of exit laws in terms of excessive functions is unique. This together with the considerations presented above establishes that the process related to $A$ has a regular Feller exit boundary with $\ka\ge 2$ exits.

To this end, we note that if $f \in \Ker A$, then $f_j(x) = m_jx + b, x\in [0,r], j \in \kad$ for certain real numbers $b$ and $m_j, j\in \kad$, because $f_j(0) $ is to be independent of $j$. Next, condition (b) tells us that we should have \begin{equation}\label{exa3:8} (1-q)m_j + d(m_j r +b)=0.\end{equation} It follows that $m_j$ does not depend on $j$ either, and thus can be denoted by $m$. Also, by (c), we need to have $0=\sum_{j\in \kad} \alpha_j m = (1-\beta)m$, implying $m=0$. Since, by \eqref{exa3:8}, this forces $b=0$ and, consequently, $f=0$, we are done. 

For $\beta=1$, the kernel of $A$ is not trivial. It contains all functions of the form $f=\dot\sum_{j\in \kad} f_j$ where
\[ f_j(x) \coloneqq mx - d^{-1}(1-q+dm)r, \qquad x \in [0,1], j \in \kad .\]

\section{A sample generation theorem} \label{asgt}

In the foregoing sections we described concatenated stochastic processes by two apparently different means. In Section \ref{cbnr} we used resolvents, whereas in Section \ref{examples} each example was commenced by an account in terms of  generator and corresponding boundary and transmission conditions. Although is was more or less obvious from the calculations presented in the examples that these are two descriptions of the same stochastic process, we never proved this formally. This is what we aim to do in this section. 

To this end, we come back to generation theorems for diffusions on graphs of the type considered in \cite{bobmor,nagrafach,gregosiewicz,gregosiewiczsem,banfal2,gregnew,binz}. To recall, in these papers, it is imagined that graph's vertices are semi-permeable membranes. Therefore, it is convenient to think of the graph as the union of, say, $N$, disjoint intervals: 
\[ \suu = \dot {\bigcup_{i\in \mc N}} \{i\} \times [0,r_i], \]
where, as before, $\mc N\coloneqq \{1,\dots, N\}$ and $r_i, i\in \mc N$ are positive numbers (edge's lengths).  
Each interval's (i.e., edge's) end is considered as different even from the ends of adjacent edges, since it is seen as lying on `this side' of the membrane, as opposed to `the other side' of the membrane. Moreover, the membrane's permeability 
depends both on the edge from which a diffusing particle filters and on the edge to which the diffusing particle filters.  

A more precise definition involves $N$ quadruples of parameters
\[ (p_i,c_i,q_i,d_i), i \in \mc N \]  
such that $p_i,q_i \in [0,1]$ and $c_i,d_i>0$, and $2N$ sub-probability measures $\mem_{i,j}, i\in \mc N$ and $j=1,2$ on $S$.
The domain of the generator $A$ of the process considered is composed of $f=\dot\sum_{i\in \mc N} f_i$ such that $f_i$ is twice continuously differentiable on $[0,r_i]$, and the following boundary and transmission conditions are satisfied (cf. \eqref{exa2:2})
\begin{align} \nonumber  p_i f_i''(0) - (1-p_i)f_i'(0) + c_i f_i(0) &= c_i \int f \ud \mem_{i,1},\\
q_i f_i''(r_i) + (1-q_i)f_i'(r_i) + d_i f_i(r_i) &= d_i \int f \ud \mem_{i,2}, \qquad i \in \mc N \label{asgt:1}
 \end{align}
(these are particular examples of non-local Feller--Wentzel conditions).
For such $f$, 
\[ Af \coloneqq {\textstyle \frac 12} f'' \coloneqq {\textstyle \frac 12} \dot {\sum_{i\in \mc N}} f_i''. \]
In \cite{nagrafach} (and in \cite{bobmor}, where $N=3$), $p_i$s are zeros; the case of $p_i\not =0$ is considered in \cite{gregnew}; the measures $\mem_{i,1}$ and $\mem_{i,2}$ are convex combinations of Dirac measures at the endpoints of edges that are adjacent to $(i,0)$ and $(i,r_i),$ respectively. In \cite{banfal2} and \cite{marjeta,binz} more general operators and transmission conditions are discussed, including those without a clear probabilistic interpretation. Moreover, analysis is carried out also in spaces other than the space of continuous functions. The requirement that $\mem_{i,j}$s are convex combinations of Dirac measures at the ends of adjacent edges is natural and means that after filtering through the end of the $i$th vertex, a particle starts at one of the ends of the adjacent edges, choosing an edge with probability dictated by the convex combination. In what follows, however, we will work with general $\mem_{i,j}$s: all we assume is that they are sub-probability measures. 

The latter assumption is necessary for the conclusion that the operator $A$ defined above satisfies the positive maximum principle. More precisely, it guarantees that if a positive maximum  of a function $f\in \dom{A}$ is attained at one of the vertices, the value of $Af$ at this vertex is not positive (cf. \cite{knigaz}*{p. 17} or our Section \ref{pmp}). Since it is rather clear that $A$ is densely defined, to prove that $A$ is a Feller generator, one needs to check the range condition. As mentioned in the introduction, this is by far the hardest task; in the previous papers, various methods, rarely with clear probabilistic meaning, were used to accomplish it, including Greiner-like perturbations (see \cite{greiner} and \cite{banasiak,banfal2,3Dlayers}, and references given therein, comp. also \cite{arkuk2})), passing to isomorphic semigroups (see \cite{bobmor,nagrafach}), and lifting techniques (see   \cite{banfal1}*{Section 3.2}) that are often used in singular perturbation theory involving less standard boundary conditions --- see, for example, \cite{banasiak}*{Lemma 2.3}, or \cite{banbob2}*{pp. 230--232}. 

Meanwhile, the range condition is a direct consequence of our Theorem \ref{gen}, a bit further down, which, in turn, is a corollary to Theorem \ref{thm5}.    
 
Before presenting the theorem, we need some notation. To begin with, for $i\in \mc N$, let $A_i$ be the operator in $C[0,r_i]$ with domain composed of $C^2[0,r_i]$ functions, that is, of twice continuously differentiable functions, satisfying the boundary conditions 
\begin{align} \nonumber  p_i f''(0) - (1-p_i)f'(0) + c_i f(0) &=0,\\
q_i f''(r_i) + (1-q_i)f'(r_i) + c_i f(r_i) &= 0, \label{asgt:2}
 \end{align}
 and defined by $A_i f= \frac 12 f''.$ In Section \ref{exa2} we have established that there are two exit laws, say, $\ell_{\lam}^{i,j}, j=1,2$ for $A_i$; they are given by \eqref{exa2:5a} and \eqref{exa2:4a} with $p,q,c$ and $d$ replaced by $p_i,q_i,c_i$ and $d_i$, respectively.  
 
 Also, let $\rladu, \lam >0$ be the resolvent of the disjoint union of the related processes (see \eqref{du}). Finally, let $\rlac , \lam >0$ defined by
 \[ \rlac g = \rladu g + \sum_{i\in \mc N}\sum_{j=1,2} u_{i,j} \ell_\lam^{i,j}, \qquad g \in \coss,\lam >0, \]
be the resolvent of the process concatenated by means of the measures $\mem_{i,j}$ featured in \eqref{asgt:1}; this means that $u_{i,j}$s are unique solutions to the counterpart of the system \eqref{mapka},
and in fact, see \eqref{bttd:1},
\begin{equation}\label{asgt:3} u_{i,j} = \int \rlac g \ud \mem_{i,j}, \qquad i\in \mc N, j=1,2. \end{equation}

\begin{thm}\label{gen} For any $\lam >0$ and $g\in \coss$ there is a unique $f\in \dom{A}$ solving the resolvent equation $\lam f - Af =g$. This $f$ is 
\[ f = \rlac g. \]  
 \end{thm}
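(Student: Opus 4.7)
The plan is to verify directly that $f\coloneqq \rlac g$ belongs to $\dom{A}$ and satisfies $\lam f - Af = g$, and then to obtain uniqueness from the positive maximum principle for $A$ (which holds by an argument entirely analogous to that of Section \ref{pmp}).

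First I would check that $f\in \dom{A}$. On the $i$th edge, $f_i = R_{\lam,i} g_i + \sum_{j=1,2} u_{i,j} \ell_\lam^{i,j}$; since $R_{\lam,i} g_i \in \dom{A_i}\subset C^2[0,r_i]$ and the exit laws themselves lie in $C^2[0,r_i]$ by their explicit form \eqref{exa2:5a}, each $f_i$ is twice continuously differentiable. The substantive point is the transmission conditions \eqref{asgt:1}. Applying the left-endpoint functional
\[ F_1^i h \coloneqq p_i h''(0) - (1-p_i) h'(0) + c_i h(0) \]
to $f_i$, the $R_{\lam,i} g_i$ summand contributes zero because it satisfies the homogeneous condition \eqref{asgt:2}, while the exit-law contributions are controlled by \eqref{exa2:9}, giving $F_1^i \ell_\lam^{i,1} = c_i$ and $F_1^i \ell_\lam^{i,2} = 0$. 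Hence $F_1^i f_i = c_i u_{i,1}$, which by the key identity \eqref{asgt:3} equals $c_i \int f \ud \mem_{i,1}$, exactly as required. The right-endpoint condition follows identically via \eqref{exa2:10}.

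Next I would verify the resolvent equation edgewise. Using $(\ell_\lam^{i,j})'' = 2\lam \ell_\lam^{i,j}$ from \eqref{exa2:8} and the fact that $R_{\lam,i}$ is the resolvent of $A_i$, I obtain
\[ \lam f_i - \tfrac 12 f_i'' = \bigl(\lam R_{\lam,i} g_i - \tfrac 12 (R_{\lam,i} g_i)''\bigr) + \sum_{j=1,2} u_{i,j}\bigl(\lam \ell_\lam^{i,j} - \lam \ell_\lam^{i,j}\bigr) = g_i, \]
so $Af = \lam f - g$, as required. Uniqueness follows because, as in Section \ref{pmp}, $A$ satisfies the positive maximum principle and is therefore dissipative, so $\lam - A$ is injective on $\dom{A}$.

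The only real obstacle would ordinarily be the transmission-condition verification; but this is essentially pre-packaged by the construction of $\rlac$ in Section \ref{bttd}, since the very purpose of defining $u_{i,j}$ via the consistency system \eqref{mapka} was to enforce $u_{i,j} = \int \rlac g \ud \mem_{i,j}$, and this dovetails cleanly with the algebraic identities \eqref{exa2:9}--\eqref{exa2:10} for exit laws. In this sense the theorem is the promised payoff of Sections \ref{cbnr} and \ref{examples}: once the right exit-law framework and the right consistency equation are in place, the classical generator-level description reduces to a direct check.
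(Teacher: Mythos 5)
Your proof is correct and follows essentially the same route as the paper's: verify that $\rlac g$ lies in $\dom{A}$ by applying the endpoint functionals edgewise (using the homogeneous conditions \eqref{asgt:2} for the $\rlai g_i$ part, the Kronecker-delta identities \eqref{exa2:9}--\eqref{exa2:10} for the exit laws, and the consistency identity \eqref{asgt:3}), check the resolvent equation via $(\ell_\lam^{i,j})''=2\lam \ell_\lam^{i,j}$, and conclude uniqueness from dissipativity via the positive maximum principle. No gaps.
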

\begin{proof}Let $C^2(\suu)$ be the subspace of $\cosu $ of $f=\dot\sum_{i\in \mc N} f_i$ such that $f_i\in C^2[0,r_i]$ (that is, $f_i$ is twice continuously differentiable), and let $\Delta:C^2(\suu)\to \cosu$ be given by $\Delta f \coloneqq \frac 12 f'' \coloneqq \dot \sum_{i\in \mc N} f_i''.$  Also, let $F_{i,j}, i\in \mc N, j=1,2$ be the following functionals on $C^2(\suu)$:
\begin{align*}
F_{i,1} f &=   p_i f_i''(0) - (1-p_i)f_i'(0) + c_i f_i(0),\\ 
F_{i,2} f &= q_i f''_i(r_i) + (1-q_i)f'_i(r_i) + d_i f(r_i), \qquad i \in \mc N.  
\end{align*}
Since the kernel of $F_{i,j}$ includes all functions that are equal to zero on the $i$th edge and because of the definition of $A_k$,  we see that  if $f = \dot\sum_{k\in \mc N} f_k$ is such that $f_k \in \dom{A_k}, k \in \mc N$, we have $F_{i,j} f = 0$.  In particular, $F_{i,j} \rladu g =0$ for all $g\in \cosu.$ 

Moreover, see \eqref{exa2:8}--\eqref{exa2:10},
\begin{equation} \label{asgt:4} \Delta \ell_\lam^{i,j} = \lam \ell_\lam^{i,j} , \qquad i\in \mc N, j=1,2,\end{equation}
and 
\begin{align} 
F_{i,1} \ell_\lam^{i,j} = \delta_{1,j} c_i \mquad { and }
F_{i,2} \ell_\lam^{i,j} = \delta_{2,j} d_i, \qquad j =1,2.\label{asgt:5}
\end{align}
Therefore, 
\( F_{i,1} \rlac g = F_{i,1} \rladu + c_i u_{i,1} = c_i u_{i,1},\)  
and, similarly, $F_{i,2} = d_i u_{i,2}$. When combined with \eqref{asgt:3}, this means that 
\[ F_{i,1} \rlac = c_i \int \rlac \ud \mem_{i,1} \mquad { and }  F_{i,2} \rlac = d_i \int \rlac \ud \mem_{i,2},\] 
that is, by \eqref{asgt:1}, that $\rlac g $ belongs to $\dom{A}$. Moreover, since $A$ is a restriction of $\Delta$ and so is the generator 
of the process of disjoint union, 
\begin{equation}\label{asgt:6} (\lam - A) \rlac g = (\lam - \Delta) \rladu g  = g \end{equation} 
where we also used \eqref{asgt:4}. 

We have shown that $\rlac g$ is a solution to $\lam f - A f=g$. To complete the proof, we note that, since $A$ satisfies the positive maximum 
principle, it is dissipative by  \cite{ethier}*{Lemma 2.1, p. 165}: for any $f \in \dom{A}$, $\|\lam f - Af\|\ge \|f\|.$ Therefore, the solution is unique. \end{proof}

\begin{cor}The operator $A$ coincides with the generator $\aco$ of the process related to $\rlac, \lam >0$.\end{cor}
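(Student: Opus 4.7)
The plan is to extract the corollary directly from Theorem \ref{gen} together with the standard correspondence between a Feller resolvent and its generator. By Theorem \ref{thm5}, the family $\rlac, \lam >0$ is indeed a Feller resolvent on $\cosu$, so by the material recalled in Section \ref{pofs} it is of the form $\rlac = (\lam - \aco)^{-1}$ for a uniquely determined Feller generator $\aco$; in particular, $\dom{\aco}$ equals the common range of the operators $\rlac$, and
\[ \aco \rlac g = \lam \rlac g - g, \qquad g \in \cosu, \lam >0. \]

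The first step would be the inclusion $\dom{\aco}\subseteq \dom{A}$. Pick any $h \in \dom{\aco}$ and write $h = \rlac g$ for $g = \lam h - \aco h \in \cosu$. Theorem \ref{gen} asserts that $\rlac g \in \dom{A}$ and that $(\lam - A)\rlac g = g$. Hence $h \in \dom{A}$, and, on comparing with the resolvent identity for $\aco$, we obtain $Ah = \lam h - g = \aco h$. Thus $A$ and $\aco$ agree on $\dom{\aco}$.

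The reverse inclusion $\dom{A}\subseteq \dom{\aco}$ is the uniqueness half of Theorem \ref{gen}. Given $f \in \dom{A}$ and $\lam >0$, let $g \coloneqq \lam f - Af \in \cosu$. Theorem \ref{gen} says that the equation $\lam \tilde f - A \tilde f = g$ has the \emph{unique} solution $\tilde f = \rlac g$ in $\dom{A}$. Since $f$ itself is also a solution, we must have $f = \rlac g$, which places $f$ in the range of $\rlac$, that is, in $\dom{\aco}$. Combining the two inclusions with the agreement of the two operators on the common domain finishes the proof.

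There is no real obstacle here: the corollary is essentially a repackaging of Theorem \ref{gen}, which already supplies both the surjectivity of $\lam - A$ and the dissipativity-based uniqueness needed to identify $A$ with $\aco$. The only point worth stating carefully is that the Feller character of $\rlac$ (guaranteed by Theorem \ref{thm5}) is what entitles us to speak of its generator $\aco$ in the first place.
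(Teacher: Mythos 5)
Your proof is correct and follows essentially the same route as the paper: both halves rest on Theorem \ref{gen}, with the inclusion $\dom{\aco}\subseteq\dom{A}$ and the identity $A\rlac g=\aco\rlac g$ obtained exactly as in the paper's argument. The only (harmless) variation is in the reverse inclusion: you invoke the uniqueness clause of Theorem \ref{gen} directly to conclude $f=\rlac g\in\dom{\aco}$, whereas the paper appeals to the abstract fact that a dissipative operator cannot properly extend a generator (citing Ethier--Kurtz, Proposition 4.1, p.~21) --- both arguments ultimately rest on the same dissipativity of $A$ established in the proof of Theorem \ref{gen}.
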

\begin{proof} We have proved that $\dom{\aco}\subset \dom{A}$. Moreover, by \eqref{asgt:6}, $\lam \rlac g  - \aco \rlac g = g= \lam \rlac g - A\rlac g$, proving that $A\rlac g = \aco \rlac g, g \in \cosu$. In other words, $\aco $ is a restriction of $A$ to $\dom{\aco}$. On the other hand, both operators are generators of Feller semigroups, and thus are dissipative. This is impossible unless $\dom{A} = \dom{\aco}$ (see \cite{ethier}*{Proposition 4.1, p. 21}), as desired. 
 \end{proof}

Before closing this section, we note that the argument presented in the proof of Theorem \ref{gen} applies to 
more general scenarios. First of all, for some $i$s, either $c_i$ or $d_i$ may be zero, as long as we avoid the 
case described in Section \ref{sheep}. In such a scenario there  is  only one exit for the process related to $A_i$ 
and one of transmission conditions in \eqref{asgt:1} should be changed appropriately. Nevertheless, the argument is the same because the remaining exit law has the properties listed at the end of Section \ref{remarkable} (these properties allow writing counterparts of \eqref{asgt:4}--\eqref{asgt:5}). Likewise, 
some of the intervals can be replaced by half-lines and $A_i$s can be replaced by generators of the type discussed in Section \ref{exa1}. Finally, each interval can be replaced by the star-like graph of Section \ref{exa3}  and the corresponding generator can be replaced by the generator of Walsh-type Brownian Motion. In the last scenario, we would perhaps face more than 
two transmission conditions for each component of $\suu$, but again the argument would be the same, modulo natural changes. The same remark applies to more general graphs like those in \cite{kostrykin2012}. 

Secondly, even though we restrict our applications to stochastic processes on graphs (which are our main motivation), it is worth stressing that  the results of Section \ref{cbnr} apply also to processes with exit laws that are not described by boundary and transmission conditions.

Finally, a comment is also in order on the remarkable properties of exit laws \eqref{asgt:4}--\eqref{asgt:5} that played such a crucial role in the proof of Theorem \ref{gen}. Functions with such or similar properties are typically used as handy tools in lifting techniques (see the references cited above). Thus, it appears that the analysis presented in this paper is another reflection of the fact that classical analysis has its probabilistic counterpart, with the latter providing intuitions that are not easily visible in the former (cf. \cite{doobc}).

\section{Convergence of semigroups} \label{s:cos}
Our last section (Section \ref{aap}) is devoted to an applications of the results obtained in the first part of the paper: we prove an averaging principle for fast processes, of the kind considered before in \cites{banfal1,bobmor,nagrafach,zmarkusem}. These results hinge on the theory of convergence of semigroups which is recalled briefly here.

The classical Trotter--Kato Theorem (see e.g. \cites{goldstein,pazy}) says that strongly continuous equibounded semigroups \( \sem{B_\eps}, \eps \in (0,1]\) in a Banach space $E$ converge as $\eps \to 0$ to a strongly continuous semigroup $\sem{B}$, that is,  
\begin{equation}\label{alt:1} \grae \e^{tB_\eps } f= \e^{tB}f, \qquad t \ge 0, f \in E, \end{equation}
iff 
\[ \grae \rez{B_\eps} f = \rez{B} f, \qquad  f \in E, \]
for some/all $\lam >0$; moreover, then the limit \eqref{alt:1} is uniform in $t$ in compact subsets of $[0,\infty)$. 
In other words, such regular convergence of semigroups is completely characterized (see also \cites{kniga,knigaz,ethier} for the Sova--Kurtz version \cites{kurtz,sova} of this characterization). 

However, in the theory of singular perturbations and in the particular example we are studying in this paper the limit semigroup is strongly continuous only on a subspace of $E$: we are facing a limit theorem of the form 
\begin{equation}\label{alt:2}  \grae \e^{tB_\eps} f = \e^{tB} P f , \qquad t >0, f \in E\end{equation}
where \sem{B} is a strongly continuous semigroup on a subspace $E_0$ of $E$ and $P$ is a projection on $E_0$ (in the sense that $P^2=P$ and $Pf =f , f \in E_0$). Needless to say, in this case the classical theory does not work and, in particular, condition 
\begin{equation}\label{alt:3} \grae  \rez{B_\eps} f = \rez{B} Pf, \qquad  f \in E, \end{equation}
for all/some  $\lam >0$ is necessary but not sufficient for \eqref{alt:2} (see \cite{deg} or \cite{knigaz}).

Condition \eqref{alt:3} can imply \eqref{alt:2} provided that the semigroups involved possess additional regularity properties (like, for example, uniform holomorphicity --- see e.g. \cite{knigaz}*{Chapters 31 and 41} for details). A different set of conditions guaranteeing that \eqref{alt:3} implies \eqref{alt:2} has been given by T. G. Kurtz \cites{ethier,kurtz,kurtzper,kurtzapp}. While Kurtz's singular convergence theorem is usually expressed in terms of the so-called extended limit of generators, for our subsequent analysis the following resolvent-version will be more practical. This result can be easily deduced e.g. from combined Lemma 7.1 and Theorem 42.2 in \cite{knigaz}.    

\begin{thm} \label{tkurtza} Suppose $B_\eps, \eps \in (0,1]$ are generators of strongly continuous equibounded semigroups. Suppose also that  for some $\lam >0$
\begin{equation}\label{alt:4} \grae \rez{\eps B_\eps} = \rez{A_0} \end{equation}
where $A_0$ is the generator of a strongly continuous semigroup \sem{A_0} such that 
\[ Pf\coloneqq \grat \e^{tA_0} f , \qquad f \in E\]
 exists. Then condition \eqref{alt:3}  (for some $\lam >0$, with the same $P$) implies \eqref{alt:2}, and the limit is uniform in $t$ in compact subsets of $(0,\infty)$; for $f\in E_0$ the limit is uniform in $t$ in compact subsets of $[0,\infty).$ 
\end{thm}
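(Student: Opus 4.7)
The plan is to decompose any $f\in E$ as $f = Pf + (f-Pf)$, treat the first summand via a slow-dynamics Trotter--Kato argument, and treat the second via a boundary-layer estimate relying on hypothesis \eqref{alt:4}. As a preliminary, that hypothesis together with equiboundedness of $\sem{B_\eps}$ gives, by the classical Trotter--Kato Theorem applied to the semigroups with generators $\eps B_\eps$,
\[ \grae \e^{s\eps B_\eps} g = \e^{sA_0}g , \qquad g \in E,\]
uniformly in $s$ on compact subsets of $[0,\infty)$. Existence of $Pg=\lim_{s\to \infty}\e^{sA_0}g$ for every $g$ then forces $P$ to be a bounded projection; in particular $P(f-Pf)=0$, so $\|\e^{sA_0}(f-Pf)\|\to 0$ as $s\to \infty$.

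For $f \in E_0\coloneqq P(E)$, hypothesis \eqref{alt:3} reads $\rez{B_\eps}f \to \rez{B}f$, which together with equiboundedness of $\sem{B_\eps}$ and strong continuity of $\sem{B}$ on $E_0$ yields, via an appropriate form of the Trotter--Kato Theorem (the limit pseudoresolvent $\rez{B}P$ being the resolvent of $B$ regarded as a generator on the closed subspace $E_0$),
\[ \grae \e^{tB_\eps}f = \e^{tB}f, \qquad f\in E_0,\]
uniformly in $t$ on compact subsets of $[0,\infty)$.

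For arbitrary $f \in E$ it remains to show that $\e^{tB_\eps}(f-Pf)\to 0$ uniformly in $t$ on compact subsets of $(0,\infty)$. Using $\e^{tB_\eps}= \e^{(t/\eps)(\eps B_\eps)}$, I would, given $\delta >0$, first pick $S=S(\delta)$ with $\|\e^{SA_0}(f-Pf)\|< \delta$, and then, for $\eps$ so small that $t/\eps > S$ throughout a fixed compact $t$-interval $[t_0,T]\subset (0,\infty)$, telescope
\[ \e^{tB_\eps} (f - Pf) = \e^{(t/\eps - S)(\eps B_\eps)}\bigl [\e^{S(\eps B_\eps)}(f-Pf)\bigr ].\]
Equiboundedness of $\sem{B_\eps}$ bounds the outer factor by some $M$, and the preliminary fast-dynamics step gives $\e^{S(\eps B_\eps)}(f-Pf)\to \e^{SA_0}(f-Pf)$ in norm. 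Hence
\[ \sup_{t\in [t_0,T]} \bigl\|\e^{tB_\eps}(f-Pf)\bigr \| \le M\delta + o(1) \quad \text{as } \eps \to 0, \]
and letting $\delta \to 0$ yields the required vanishing. Combining with the slow-dynamics step gives \eqref{alt:2} with the stated uniformities.

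The main obstacle is this final double limit $\eps\to 0$, $t/\eps \to \infty$; it is exactly what restricts the uniformity for general $f$ to compact subsets of $(0,\infty)$, rather than $[0,\infty)$ as for $f\in E_0$. The telescoping at a fixed intermediate time $S$ decouples the two limits: $S$ is chosen using the long-time asymptotics of $\sem{A_0}$ alone, and only then is $\eps \to 0$ taken, exploiting uniform convergence of $\e^{s(\eps B_\eps)}$ to $\e^{sA_0}$ on the compact interval $[0,S]$.
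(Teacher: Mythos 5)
Your argument is correct, but note that the paper does not actually prove Theorem \ref{tkurtza}: it deduces it from two cited results (Lemma 7.1 and Theorem 42.2 of \cite{knigaz}), so you have supplied a self-contained proof where the paper supplies only a reference. What you wrote is in substance the standard two-time-scale proof of Kurtz's singular convergence theorem: the decomposition $f=Pf+(f-Pf)$, the identity $\e^{tB_\eps}=\e^{(t/\eps)(\eps B_\eps)}$, the factorization through a fixed intermediate time $S$ chosen from the long-time behaviour of \sem{A_0} alone, and the resulting $M\delta+o(1)$ bound are exactly the ingredients of the classical argument, and they correctly explain why uniformity degenerates from $[0,\infty)$ to $(0,\infty)$ for $f\notin E_0$. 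The one step you leave as a black box is the ``appropriate form of the Trotter--Kato Theorem'' invoked for $f\in E_0$: since the limit $\rez{B}P$ is a degenerate pseudoresolvent on all of $E$ and the semigroups \sem{B_\eps} need not leave $E_0$ invariant, one cannot simply restrict to $E_0$ and apply the version stated at the beginning of Section \ref{s:cos}; what is needed is the Sova--Kurtz form asserting convergence, uniform on compact subsets of $[0,\infty)$, for initial data in the regularity space $\{g:\lil \lam \rez{B}Pg=g\}$, which here is precisely $E_0$. That result is standard (and is essentially one of the two results the paper cites), so this is a reference to be made precise rather than a gap; the remaining points --- boundedness and idempotency of $P$ via Banach--Steinhaus and the semigroup property, and the extension of the single-$\lam$ resolvent convergence in \eqref{alt:4} to the full classical Trotter--Kato hypothesis --- are routine and handled correctly.
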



\section{An averaging principle}\label{aap}

\subsection{Asymptotically splittable processes} 
Before we begin our main subject, let us first think what happens when a non-honest process is accelerated. In other words, how multiplying the generator $A$ of a non-honest process by $\eps^{-1}$, where $\eps \ll 1$  influences the process. It is quite easy to see that a $\nad$ is excessive for $\eps^{-1}A$ iff it is excessive for $A$. Also, a $\widetilde \ell_\lam, \lam >0$ is an exit law for $\eps^{-1}A$ iff it is of the form $\tilde \ell_\lam = \ell_{\eps \lam}$ where $\ell_\lam, \lam >0$ is an exit law for $A$. Since, by the representation of \eqref{owc:dod}, $\lim_{\eps \to 0} \ell_{\eps \lam} (x) = m_x ([0,\infty))$ we conclude that in the limit of accelerated processes the probabilities of exiting through various exits do not change, but the process's lifetime is $0$. ($\lam \mapsto  m_x ([0,\infty))$ is the Laplace transform of the Dirac measure at $0$ multiplied by $  m_x ([0,\infty))$.)

A more interesting limit is obtained if while the process accelerates, the speed with which probability mass escapes from the state-space is tuned so that the fluxes through the boundary's exit points are more or less constant. These considerations lead to the following definition. 

\newcommand{\sfv}{\mathsf v}
\newcommand{\sfV}{\mathsf V}
\newcommand{\vr}{\varrho}

Let $ \sfv \in \R^\ka$ be a non-zero vector and let $A_{\eps \sfv}, \eps \in [0,1]$ be Feller generators in the space $\coss$ of continuous functions on a compact space $S$. We think of all these generators as describing `one stochastic process' with regular Feller boundary and $ \ka $ exit points, and of $\eps \sfv$ as a vector of parameters that describe the speeds with which probability mass escapes the state-space through exit points. In particular, we assume that $A_{\mathsf 0}$ is a conservative Feller process. Moreover, to each $\eps$  there correspond $\ka$ excessive functions describing exits, which we denote 
\[ \nad_{\eps}^j, \qquad j \in \kad (=\{1,\dots, \ka\}). \] 
\begin{defn} \label{asik} \emph{We will say that $A_{\sfv}$ is a generator of an asymptotically splittable process if the following conditions are satisfied. 
\begin{itemize} 
\item [(a) ] There are constants $\vr^j \in C(S)$ such that $\grae \nad_{\eps}^j = \vr^j 1_S, j \in \kad$ (necessarily $\vr^j\ge 0$ and $\sum_{j\in \kad} \vr^j =1$).  
\item [(b) ] $\grae \rez{A_{\eps \sfv}} f = \rez{A_{\mathsf 0}}f, f \in C(S)$. 
\item [(c) ] $A_{\mathsf 0} $ is a conservative Feller generator  and $\grat \e^{tA_{\mathsf 0}} f = Pf, f \in C(S)$ where $P$ is a projection of $\coss $ onto the subspace 
$\cosf \subset \coss$ of functions that are constant on $S.$ 
\item [(d) ] There is a $\gamma >0$ such that $\grae \rez{\eps^{-1}A_{\eps \sfv}} f = \frac 1{\lam + \gamma} Pf, f \in \coss$. 
\end{itemize}}
 \end{defn}

We note that in view of representation of exit laws in terms of excessive functions, if (d) holds, condition (a) implies convergence of corresponding exit laws. Denoting these laws by $\ell_{\lam,\eps}^j, \lam >0, \eps \in (0,1]$, we obtain namely   
\begin{equation}\grae \ell_{\lam,\eps}^j = \frac{\rho_j\gamma }{\lam + \gamma}1_S,\qquad \lam >0, j \in \kad.\label{asp:1} \end{equation} On the other hand, convergence of exit laws, because of formulae like \eqref{exa2:5b} and \eqref{exa3:7}, often leads to (d). We also note that one or several of $\rho^j$s in condition (a) can be zero --- some exits can be lost in approximation. All these points are exemplified below.

\subsection{Examples of asymptotically splittable processes}\label{eoas}

In this section we collect a number of typical examples of asymptotically splittable processes.  

\begin{ex} In Example \ref{ex0} of Section \ref{owc}, $\sfv$ is one-dimensional --- there is a single parameter, namely $\alpha$, that describes the speed with which probability mass escapes from $[0,1]$. Fixing $\alpha>0$ and denoting by $A_{\alpha}$ the operator $A$ from this example corresponding to parameter $\alpha$ we see what follows. 

(a) There is one exit law for each $A_{\eps \alpha}, 0< \eps \le 1$ corresponding to the same excessive function $\nad_\eps = 1_{[0,1]}$. 

(b) $\grae \rez{A_{\eps \alpha}} f= \rez{A_0}f, f \in C[0,1]$ by \eqref{exa0:1}.  

(c) $A_0$ is a conservative Feller generator and $\grat \e^{tA_0} f = f(1)1_{[0,1]}$. 

(d) Since 
\[ \rez{\eps^{-1}A_{\eps \alpha}} f(x) = \eps \int_x^1\e^{\eps \lam (x-y)} f(y) \ud y + \e^{\eps \lam (x-1)} \frac {f(1)}{\lam + \alpha},\] 
we have $\grae  \rez{\eps^{-1}A_{\eps \alpha}} f = \frac 1{\lam +\alpha}  f(1)1_{[0,1]}.$ 

It follows that $A_{\alpha}$ where $\alpha >0$ generates an asymptotically splittable process: here $\ka =1$, $\vr=1,$ $Pf= f(1)1_{[0,1]}$ and $\gamma = \alpha $. 
\end{ex}
 
\begin{ex}\label{ex:4} In Example \ref{ex01}, $A_\alpha$ also generates an asymptotically splittable process as long as we assume that $\grat \e^{tA} f = Pf, f \in \coss $ for a certain projection $P$ on $\cosf$. Indeed: 

(a) As before, the excessive functions do not depend on $\eps\in (0,1]$ and coincide with $1_S.$ 

(b) We clearly have $\grae \rez{A_{\eps a}} f= \grae (\lam +\eps \alpha - A)^{-1} f = \rez{A_0}f , f\in C(S)$ (where $A_0=A$). 

(c) Condition (c) is fulfilled by  assumption. 

(d) By the same assumption, $\lilz \lam \rez{A}f=Pf, f \in \coss$, and therefore \( \rez{\eps^{-1}A_{\eps \alpha}}f =\frac {\eps (\lam +\alpha)}{\lam +\alpha} (\eps (\lam +\alpha) - A)^{-1}f \) converges to $\frac 1{\lam +\alpha} Pf$, as $\eps \to 0$. In other words, again $\kappa=1, \vr =1$ and $\gamma = \alpha.$ \end{ex}


\begin{ex} \label{ex:5} In example of Section \ref{exa2}, we think of $p$ and $q$ as fixed and of $c$ and $d$ as measuring the speed with which probability mass escapes through the left and right ends of the interval $[0,r]$. In other words, $\sfv = (c,d)\in \R^2$.  To see that, as long as we do not have $p=q=1$, this is another example of an asymptotically splittable process we note what follows. 

(a)~There are two excessive functions corresponding to $A_{\eps \sfv}$; they are given by \eqref{exa2:7} with $c$ and $d$ replaced by $\eps c$ and $\eps d$, respectively. As a result, $\grae \nad^1_\eps = \frac {c(1-q)}{c(1-q) + d(1-p)}1_{[0,r]}$ and 
$\grae \nad^2_\eps = \frac {d(1-p)}{c(1-q) + d(1-p)}1_{[0,r]}$. 

(b) The quantities $a_{i,j}$ defined in \eqref{exa2:4a} with $c$ and $d$ replaced by $\eps c$ and $\eps d$ converge as $\eps \to 0$ to the same quantities with $c=d=0$, and the corresponding determinant is again negative. A similar remark concerns $b_1$ and $b_2$ of \eqref{exa2:4b}. It follows that the resolvent of $A_{\eps \sfv}$ which, to recall, is given by the first three terms in \eqref{exa2:5}, converges strongly to the resolvent of $A_{\mathsf 0}$. 

(c) $A_{\mathsf 0}$ is a conservative Feller process (because $1_{[0,r]}$ belongs to $\dom{A_{\mathsf 0}}$ and $A_{\mathsf 0} 1_{[0,r]} =0$). Moreover, arguing as in \cite{knigaz}*{Chapter 32}, that is, using the theory of asymptotic behavior of semigroups (see \cite{arendtsur,abhn,engel,nagel,vanneerven}, compare e.g. \cite{lasrud} and \cite{emelyanov}) it can be proved that there is a projection $P$ on $\cosf$ such that 
\[ \grat \e^{tA_{\mathsf 0}}g = Pg, \qquad g \in \coss . \]
Once existence of this limit is established, calculations presented in Section \ref{alsf} allow proving that 
\[ Pg \coloneqq \left (\frac {p(1-q)}M g(0) + \frac{q(1-p)}M g(r) + \frac{(1-p)(1-q)}M \int_0^r g(y) \ud y \right ) 1_{[0,r]},\]
where \begin{equation}\label{eoas:1} M \coloneqq p(1-q)+ q(1-p) + (1-p)(1-q)r\end{equation} is not zero by assumption (comp. \cite{3Dlayers}*{Theorem 4} where the particular case of $q=0$ and $r=1$ is covered).  To wit, if $c=d=0$ in \eqref{exa2:4a} and \eqref{exa2:4b}, then (see Appendix \ref{aff})
\begin{align*}
& \lilz \frac{\wyr}{(2\lam)^{\frac 32}} = - 2M \\
&\lilz b_1 = \beta_1 \coloneqq 2pg(0) + (1-p) \int_0^r g(y) \ud y, \\
& \lilz b_2 = \beta_2 \coloneqq 2qg(r) + (1-q) \int_0^r g(y) \ud y, \end{align*}
whereas
\begin{align*} \lilz \frac{a_{1,2}}{\slam} &= - \lilz \frac{a_{1,1}}{\slam} = 1-p, \\ \lilz \frac{a_{2,1}}{\slam} &= - \lilz \frac{a_{2,2}}{\slam}= 1-q.  \end{align*}
Moreover, we note that the limit $\grat \e^{tA_{\mathsf 0}} g$, if it exists, coincides with \( \lilz \lam \rez{A_{\mathsf 0}}g\) and, by \eqref{exa2:5}, 
 regardless of the choice of $x\in [0,r]$, $\lilz \lam \rez{A_{\mathsf 0}}g(x) $ equals 
 \begin{align*}
\lilz \frac{\lam \slam}{\wyr} & \begin{vmatrix} b_1 & { \frac{a_{1,2}}{\slam}} \\[5pt] b_2 & { \frac{a_{2,2}}{\slam}} \end{vmatrix} + \lilz \frac{\lam \slam}{\wyr} \begin{vmatrix} { \frac{a_{1,1}}{\slam}} & b_1 \\[5pt] { \frac{a_{2,1}}{\slam}} & b_2 \end{vmatrix} \\
& = \frac{-1}{2M} \begin{vmatrix} \beta_1 & 1-p \\ \beta_2 & q-1 \end{vmatrix} + \frac{-1}M \begin{vmatrix}  p-1 & \beta_1 \\ 1-q & \beta_2 \end{vmatrix}  = \frac{1}{M} \begin{vmatrix} \beta_1 & p-1 \\ \beta_2 & 1-q \end{vmatrix}. 
\end{align*}
Since this is just another expression for $Pg(x)$, our claim is proven. 

(d) If all occurrences of $\lam, g, c$ and $d$ in \eqref{exa2:4b} are replaced by $\eps \lam, \eps g, \eps c$ and $\eps d$, respectively, 
\begin{align*}
\grae \frac{b_1}{\eps } &= 2p g(0) + (1-p) \int_0^r g(x) \ud x ,\\
\grae \frac{b_2}{\eps } & = 2q g(r) + (1-q) \int_0^r g(x) \ud x.\end{align*}
When combined with \eqref{exa2:5c} and \eqref{aff:2} this shows 
\begin{align}
\grae \ell_{\lam,\eps}^1 & = \frac{c(1-q)}{2\lam M  + c(1-q) + d(1-p)} 1_{[0,r]}, \nonumber \\
\grae \ell_{\lam,\eps}^1 & = \frac{d(1-p)}{2\lam M  + c(1-q) + d(1-p)} 1_{[0,r]}. \label{eoas:2} \end{align}
Therefore, by \eqref{exa2:5b}, 
\[ \grae (\lam - \eps^{-1}A_{\eps \sfv})^{-1} = \grae (\eps \lam - A_{\eps \sfv})^{-1}\eps g = \frac 1{\lam +\gamma} Pg,\]
 where $\gamma \coloneqq \frac{c(1-q) + d(1-p)}{2M}.$ This completes the proof. 
 
Additionally, we remark that $\vr^1= \frac {c(1-q)}{c(1-q) + d(1-p)}$ becomes zero for $q=1$, and similarly $\vr^2 =\frac {d(1-p)}{c(1-q) + d(1-p)}$ is zero for $p=1$. This means that a process that has two exits can asymptotically 
lose one of them. 

In the case of $p=q=1$, the process of Example \ref{exa2} is not asymptotically splittable because in this scenario excessive functions, even though they do not depend on $\eps$, do depend on $x$.  
 \end{ex}   

\begin{ex} \label{ex:6}
As remarked in \ref{tdg}, in the example of Section \ref{exa2} with $d=0$ there is only one exit. Nevertheless, as long as $q\not =1$, the related process is still asymptotically splittable. We omit the details, since the calculations are similar to those presented in Example \ref{ex:5} (and are based on \eqref{exa2:5}). We merely note that, not surprisingly in view of Example \ref{ex:5}, in the case under study $\gamma = \frac{c(1-q)}{2M}.$ 
\end{ex} 

\begin{ex}\label{ex:7} We will argue that the process of Section \ref{exa3} is asymptotically splittable provided that $q\not =1$ and $\beta \not =1.$ Here, $\sfv$ is again one-dimensional and coincides with $d$. Parameters $q,\beta$ and $\alpha_j$s are treated as fixed.   

Convergence of excessive functions is clear: replacing $d$ with $\eps d$ in \eqref{exa3:6} and letting $\eps \to 0$, we obtain $\grae \phi_\eps^j = \frac{\alpha_j}{1-\beta}1_S.$ Point (b) of the definition is also immediate since all the quantities involved depend on $d$ continuously, and denominators obtained in the limit as $d\to 0$ differ from zero.    

Condition (c) is proved as follows. It can be argued that the semigroup generated by $A_0$ is holomorphic, irreducible and compact;  hence, there is a projection $P$ such that $\grat \e^{tA_0} g =Pg$ (we omit the details because they would lead much outside of the scope of the paper, comp. the works cited in Example \ref{ex:5}). However, a $Pg$ obtained as such a limit needs to belong to the kernel of $A_0$, and it is easy to check that the latter is composed of constant functions. It follows that $P$ is a projection on $\cosf$, and thus has the form $Pg= (F_P g)1_{S}$, where $F_P$ is a functional on $C(S)$. Moreover, since $Pg $ coincides with $\lilz \lam \rez{A_0} g$, the functional $F_P$ can be obtained as $\lilz \lam f(0)$ for the $f(0)$ defined in \eqref{exa3:5aa} with $d=0$ and $\widetilde d_j=0, j\in \kad$.

On the other hand, as long as $d=0$, by \eqref{aff:3} and the second part of \eqref{aff:1} (with $x=0$), $\lilz \frac{m_\lam}\lam (1-q) = 2 M$, where 
(cf. \eqref{eoas:1})
\begin{equation}\label{eoas:4} M \coloneqq \beta (1-q) + (1-\beta)q+ (1-\beta)(1-q)r. \end{equation}
Moreover, 
\[ \lilz F_j h_\lam =  -2 \left (qg_j(r) + (1-q)\int_0^r g_j(x) \ud x + \frac{\beta(1-q)}{1-\beta}g(0) \right ).\]
Hence, 
\begin{equation}\label{eoas:3} F_Pg  \coloneqq M^{-1}\beta(1-q)g(0) + M^{-1} \sum_{j\in \kad}\alpha_j\left ( qg_j(r) + (1-q)\int_0^r g_j(x) \ud x \right ). \end{equation}

To prove condition (d), we note that the first term in \eqref{exa3:5a} with $d$ and $\lam $ replaced by $\eps d$ and $\eps \lam$, respectively, converges to zero. Moreover, by the second formula in \eqref{aff:2}, the limit of the second term coincides with $\grae \frac{\eps d\alpha_j}{m_\lam (1-q)}.$ Also, \eqref{aff:3} and the already invoked part of formula \eqref{aff:2} show that 
$\grae \frac{m_\lam}\eps (1-q)= 2\lam M + (1-\beta) d$, where $M$ is defined in \eqref{eoas:4}. Hence, 
\[ \grae \ell_{\lam,\eps}^j = \frac {d\alpha_j}{2\lam M + (1-\beta) d}.\]

Furthermore, replacing $g$ and $\lam$ in the definitions \eqref{exa3:2a} and \eqref{exa3:3b} by $\eps g$ and $\eps \lam$ we obtain 
\[ \grae \frac{F_jh_\lam}{\eps } = -2 \left (qg_j(r) + (1-q)\int_0^r g_j(x) \ud x + \frac{\beta(1-q)}{1-\beta}g(0) \right ).\]
By \eqref{exa3:7}, this shows that 
\[ \grae \rez{\eps^{-1}A_{\eps d}} g = \frac 1{\lam + \gamma}Pg \]
with $\gamma\coloneqq \frac {1-\beta}{2M}$, where $Pg =(F_P g)1_S$ and $F_P$ is defined in \eqref{eoas:3}. 
\end{ex}

\subsection{An averaging principle} \label{ns:aap}

If $A_{\sfv}$ describes an asymptotically splittable process, conditions (b)--(d) of the definition tell us, by the Kurtz convergence theorem, that 
\[ \grae \e^{\eps^{-1}A_{\eps \sfv}t }f = \e^{-\gamma t} Pf, \qquad f \in \coss .\]
Since $P$ is a projection on the space of functions that are constant on $S$, this means in particular that as $\eps \to 0$ all points of $S$ are lumped together to form one single point of the state-space of the limit process. Furthermore, the factor $\e^{-\gamma t}$ says that after an exponential time, say, $T$, spent at this point, the limit process leaves the space and is no longer defined. In other words, the exponentially distributed time $T$ with parameter $\gamma$ is the lifetime of the limit process. Condition (a) is more specific about this scenario: it says that a particle escaping the single-point state-space does that through the $j$th gate with probability $\vr^j$: there are independent exponential random variables $T_j, j \in \kad$ with $\Pr (T_j\ge t) = \e^{-\vr^j  \gamma t }$, each representing waiting time at one of the $\ka$ gates. At $T\coloneqq \min_{j\in \kad} T_j$ (which is exponentially distributed with parameter $\gamma$)  the limit process leaves the state-space, and if $T = T_j$ (which happens with probability $\vr^j$) it does that through the $j$th gate --- functions $\lam \mapsto \frac{\vr^j \gamma}{\lam +\gamma }$ of \eqref{asp:1} are exit laws for the resolvent $\lam \mapsto \frac 1{\lam +\gamma}.$

It is clear from the description given above that asymptotically splittable processes are akin to Markov chains.  
Our main theorem in this section makes this connection more explicit (see Figure \ref{gurka}).  Namely, it says that if we concatenate $N$ asymptotically splittable processes then, by accelerating them while keeping the fluxes of probability mass through the boundaries approximately constant, we obtain a Markov chain. If initially we have $N$ processes to concatenate, defined in spaces $C(S_i), i \in \mc N$, then 
the state-space of the limit Markov chain is composed of $N$ points --- its $i$th  point is obtained by lumping together all the points in $S_i$. Moreover, the intensities of jumps in the limit chain depend on the measures $\mem_{i,j}$ that, to recall, describe starting points of the concatenated process after it exits $S_i$ through the $j$th gate --- see \eqref{mequ} further down.  

\begin{center}
\begin{figure}
\includegraphics[scale=0.73]{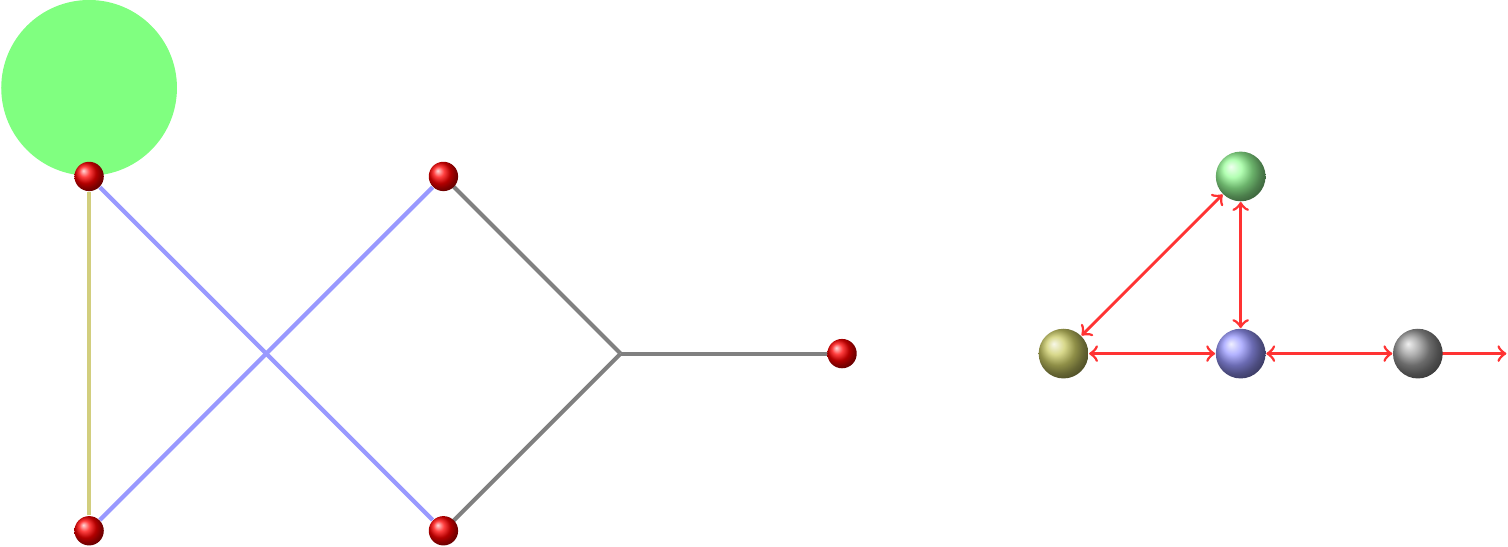}\caption{An averaging principle. (a) On the left: Subregions of the state-space, marked with various colors, are separated by `gates' (in red), such as semi-permeable membranes. Here we have two star-like graphs (perhaps with Skew Brownian motion on them), an interval, and a `general' state-space, like in Example \ref{ex:4}, symbolized by the green circle. (b) On the right: If all constituents of the concatenation are asymptotically splittable, and all these processes are accelerated, but fluxes through the gates remain constant, in the limit we obtain a finite state Markov chain. Each point in the limit state-space is obtained by lumping together all points in one of the subregions of the original state-space.}
\label{gurka}
\end{figure}
\end{center}

\vspace{-0.5cm}

Here are the details. Let $A_{\mathsf v_i}, i \in \mc N$ be the generators of asymptotically splittable processes defined in spaces $\cosi, i \in \mc N.$ Each $A_{\mathsf v_i}$ is characterized by 
\begin{enumerate} 
\item the number of gates $\ka (i) \ge 1$ with the corresponding 
\begin{itemize} 
\item excessive functions $\nad_{\eps}^{i,j}, j =1,\dots, \ka (i)$,
\item exit laws $\ell_{\lam,\eps}^{i,j} = \nad_{\eps}^{i,j}  - \lam \rez{\eps^{-1} A_{\eps \mathsf v_i}}
\nad_{\eps}^{i,j},  j =1,\dots, \ka (i) $,
\item coefficients $\vr^{i,j}$ such that $\sum_{j=1}^{\ka (i)} \vr^{i,j} =1$, describing the probabilities of exiting through gates $j=1,\dots, \ka (i)$,  
\end{itemize}
\item asymptotic exponential lifetime parameter $\gamma_i >0$,
\item projection $P_i$ of $\cosi $ on $C^\flat (S_i)$,
\item the generator $A_{\mathsf 0,i}$ of the corresponding honest Feller process.
\end{enumerate}
As in Section \ref{cbnr} we concatenate these processes with the help of sub-prob\-a\-bil\-i\-ty measures 
\[ \mem_{i,j},  \qquad i,j \in \mc {IJ} \]
on $\suu$ by requiring that a process that exits $S_i$ through the gate $j$ starts anew at a random point of $\suu \setminus S_i$, its distribution at this moment being $\mem_{i,j}$. We stress that in contrast to Section \ref{cbnr}, $\mem_{i,j}$ is assumed to be supported outside of $S_i$: 
\begin{equation}\label{nsap:-1} \mem_{i,j} (S_i) = 0, \qquad j=1,\dots, \ka (i).\end{equation}

Let $\rladue, \lam >0$ be the resolvent of disjoint union of accelerated and tuned processes:
\[ \rladue f \coloneqq \dot{\sum_{i\in \mc N}} \rez{\eps^{-1} A_{\eps \mathsf v_i}} f_i. \]
Also, let (comp. \eqref{concb} with $M=N$)
\begin{equation}\label{nsap:0} \rlace f = \rladue f + \isuma \sum_{j=1}^{\ka(i)} u_{i,j,\eps} \ell_{\lam,\eps}^{i,j}, \qquad f \in \cosu, \end{equation}
where (see Lemma \ref{cnp:lem1}) 
\[ \left (u_{i,j,\eps} \right )_{(i,j)\in \mc {IJ}} = (I - N_{\lam,\eps})^{-1}  \left (\int_S \rladue g \ud \mem_{i,j} \right )_{(i,j)\in \mc {IJ}} \]
for $N_{\lam,\eps}: \R^\ka \to \R^\ka$ ($\ka\coloneqq \sum_{i=1}^N\ka(i)$) given  by 
\begin{equation}\label{nsap:1} N_{\lam,\eps}  \left (w_{i,j} \right )_{(i,j)\in \mc {IJ}} = \left ( {\sum_{k\in\mc N}} \sum_{l=1}^{\ka (k)}w_{k,l} \int_{S_k} \ell_{\lam,\eps}^{k,l} \ud \mem_{i,j} \right )_{(i,j)\in \mc {IJ}}. \end{equation}
This formula defines the resolvent of a concatenated process in which evolution in each $S_i$ is accelerated while the fluxes of probability mass through all the exits are controlled. 

Our main theorem is concerned with the limit behavior of the semigroups related to $\rlace, \lam >0$; we will show that they converge to the semigroup related to a Markov chain in $\mc N$ with intensity matrix $Q=\left (q_{i,k}\right )_{i,k\in \mc N},$ with the following entries:
\begin{equation}\label{mequ} q_{i,i}\coloneqq - \gamma_i,\quad q_{i,k} \coloneqq \gamma_i \sum_{j=1}^{\ka (i)} \vr^{i,j} \mem_{i,j} (S_k), \quad k \not = i, \qquad i \in \mc N.\end{equation}
Here, $\vr^{i,j}$ is the probability that the approximating process started in $S_i$ will exit through the $j$th gate, and $\mem_{i,j}(S_k)$ is the probability that after exiting through this gate it will start anew somewhere in $S_k$.

\begin{thm}\label{thm:ap} Let $\macoe$ be the generator of the semigroup related to $\rlace, \lam >0$ so that 
$\rez{\macoe} = \rlace, \lam >0$. Then
\[ \grae \e^{t \macoe} f= \e^{tQ} Pf , \qquad t>0, g \in \cosu \]  
with the limit uniform for $t$ in compact subsets of $(0,\infty)$. Here, 
$P$ given by 
\[ Pf = \isuma P_i f_i, \qquad f \in \cosu \]
is the projection of $\cosu$ onto the space  $C^\flat (\suu) $ of functions that are constant on each of the spaces $S_i$ separately. The space    $C^\flat (\suu) $ is isometrically isomorphic to $\R^N$ equipped with the maximum norm, and thus each operator in this space can be identified with a matrix. Here the operator $Q: C^\flat (\suu) \to C^\flat (\suu) $ is identified with the matrix $Q$ of \eqref{mequ}.
\end{thm}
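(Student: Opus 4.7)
The strategy is to apply Kurtz's singular convergence theorem (Theorem~\ref{tkurtza}) with $B_\eps \coloneqq \macoe$. This requires two resolvent convergences: (i)~an auxiliary slow-scale limit $\grae \rez{\eps \macoe} f = \rez{A_0} f$ for some generator $A_0$ whose semigroup admits $\grat \e^{tA_0} f = Pf$; and (ii)~the main singular limit $\grae \rez{\macoe} f = \rez{Q} P f$. Equiboundedness of $\sem{\macoe}$ is immediate from the Feller property.

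For (i), I would take $A_0 \coloneqq \isuma A_{\mathsf 0, i}$, the disjoint-union generator of the honest limit processes from Definition~\ref{asik}(c); its semigroup is $\isuma \e^{tA_{\mathsf 0, i}} f_i$, and $\grat \e^{tA_0} f = \isuma P_i f_i = Pf$ by~(c) applied componentwise. Writing $(\lam - \eps \macoe)^{-1} = \eps^{-1}(\lam/\eps - \macoe)^{-1}$ and using formula~\eqref{nsap:0} at $\mu = \lam/\eps$, the disjoint-union piece yields $\isuma (\lam - A_{\eps \sfv_i})^{-1} f_i \to \rez{A_0} f$ by~(b). The exit-law corrections at $\mu = \lam/\eps$ equal the exit laws of the \emph{un}-accelerated generators $A_{\eps \sfv_i}$ at the fixed argument $\lam$, and by (a)+(b) these tend to $\vr^{i,j}(1_{S_i} - \lam \rez{A_{\mathsf 0, i}} 1_{S_i}) = 0$ since $A_{\mathsf 0, i}$ is honest. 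The corresponding coefficients $u_{i,j,\eps}(f, \lam/\eps)$ are $O(\eps)$, so the $\eps^{-1}$ prefactor does not spoil this vanishing.

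For (ii), I would pass directly to the limit in~\eqref{nsap:0}. Conditions (d) and (a)+(d) of Definition~\ref{asik} give
\[ \rez{\eps^{-1} A_{\eps \sfv_i}} f_i \longrightarrow \tfrac{1}{\lam + \gamma_i} P_i f_i \mquad{and} \ell_{\lam,\eps}^{i,j} \longrightarrow \tfrac{\vr^{i,j}\gamma_i}{\lam + \gamma_i} 1_{S_i}, \]
so the candidate limit lies in $C^\flat(\suu)$ and may be written as $\isuma w_i 1_{S_i}$ with $w_i \in \R$. Passing to the limit in the system~\eqref{mapka} for $u_{i,j,\eps}$ yields $u_{i,j} \coloneqq \grae u_{i,j,\eps} = \sum_{k \in \mc N} \mem_{i,j}(S_k)\, w_k$; substituting into the limit of~\eqref{nsap:0} and identifying each $P_i f_i$ with its scalar value, one arrives at
\[ (\lam + \gamma_i)\, w_i = P_i f_i + \gamma_i \sum_{j=1}^{\ka(i)} \vr^{i,j} \sum_{k \in \mc N} \mem_{i,j}(S_k)\, w_k. \]
The $k=i$ summand on the right vanishes because $\mem_{i,j}(S_i) = 0$, and the remaining identity is precisely $\lam w - Q w = Pf$ with $Q$ given by~\eqref{mequ}. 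Hence $\grae \rlace f = \rez{Q} Pf$, as needed.

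The main technical obstacle is the uniform (in $\eps$) invertibility of the system defining $u_{i,j,\eps}$. By Lemma~\ref{cnp:lem1} and~\eqref{nsap:1}, $\|N_{\lam,\eps}\| \le \max_{(i,j)} \int_\suu \dell^\eps \ud \mem_{i,j}$, where $\dell^\eps \coloneqq \isuma \sum_{j} \ell_{\lam,\eps}^{i,j}$. The uniform convergence $\dell^\eps \to \isuma \tfrac{\gamma_i}{\lam + \gamma_i} 1_{S_i}$, combined with $\mem_{i,j}(S_i) = 0$ and $\max_i \tfrac{\gamma_i}{\lam + \gamma_i} < 1$, keeps $\|N_{\lam,\eps}\|$ bounded strictly below~$1$ for small $\eps$; hence $(I - N_{\lam,\eps})^{-1}$ is uniformly bounded and varies continuously with its entries, legitimising the convergence $u_{i,j,\eps} \to u_{i,j}$ invoked above. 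The remaining reduction to $(\lam I - Q) w = Pf$ is routine finite-dimensional algebra.
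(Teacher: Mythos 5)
Your proposal is correct and follows essentially the same route as the paper's proof: both apply Kurtz's theorem with the slow-scale limit identified as the disjoint union of the honest generators $A_{\mathsf 0,i}$ (with the exit-law corrections at $\lam/\eps$ reducing to the unaccelerated exit laws, which vanish by honesty), and both obtain the singular limit $\rez{Q}P$ by passing to the limit in \eqref{nsap:0} and in the system for the coefficients $u_{i,j,\eps}$. Your explicit remark on the uniform bound $\|N_{\lam,\eps}\|<1$ for small $\eps$ is a minor presentational addition; the paper handles this implicitly via the convergence $N_{\lam,\eps}\to N_{\lam,0}$ with $\|N_{\lam,0}\|<1$ in the finite-dimensional setting.
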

\begin{proof} 
Clearly, we are dealing with a particular case of the situation described in Kurtz's Theorem.

\textbf{Step 1. }  \textbf {Limit of $\rlace, \lam >0$, as $\eps \to 0$.} By assumption (points (a) and (d) in Definition \ref{asik}) 
$\grae \ell_{\lam,\eps}^{k,l} = \frac{\vr^{k,l}\gamma_k}{\lam +\gamma_k}1_{S_k}, $ and in particular 
\[ \grae \int_{S_k}\ell_{\lam,\eps}^{k,l} \ud \mem_{i,j} = \frac{\vr^{k,l}\gamma_k}{\lam +\gamma_k} \mem_{i,j} (S_k) \qquad (k,l),(i,j) \in \mc{IJ}.\]
Hence, the operators $N_{\lam,\eps}$ of \eqref{nsap:1} converge as $\eps \to 0$ to $N_{\lam,0}$ defined by 
\begin{equation}\label{nsap:2} N_{\lam,0}  \left (w_{i,j} \right )_{(i,j)\in \mc {IJ}} = \left ( {\sum_{k\in\mc N}} \sum_{l=1}^{\ka(k)}w_{k,l} \frac{\vr^{k,l}\gamma_k}{\lam +\gamma_k} \mem_{i,j} (S_k) \right )_{(i,j)\in \mc {IJ}}. \end{equation}
As in Lemma \ref{cnp:lem1},
\[\|N_{\lam,0} \| \le  \max_{(i,j)\in \mc{IJ}} \sum_{k\in \mc M} \sum_{l=1}^{\ka (k)} \frac{\vr^{k,l}\gamma_k}{\lam +\gamma_k} \mem_{i,j} (S_k)=   \max_{(i,j)\in \mc{IJ}} \sum_{k\in \mc M} \frac{\gamma_k}{\lam +\gamma_k} \mem_{i,j} (S_k)  < 1.\]
At the same time, by point (d) in Definition \ref{asik}, 
\[ \grae \rladue g = \dot{\sum_{k\in \mc N}} \frac 1{\lam +\gamma_k} P_k g_k  = \dot{\sum_{k\in \mc N}} \frac {\widetilde F_kg_k}{\lam +\gamma_k} 1_{S_k},
\]
where $\widetilde F_{k}$ is the functional defining $P_k$: $P_k g_k = (\widetilde F_{k} g_k) 1_{S_k}$. It follows that 
$\left (u_{i,j,\eps} \right )_{(i,j)\in \mc{IJ}}$ converges, as $\eps \to 0$, to 
\[ \left (u_{i,j,0} \right )_{(i,j)\in \mc{IJ}} = (I - N_{\lam,0})^{-1} \left ( v_{i,j,0} \right )_{(i,j)\in \mc{IJ}}, \]
where \[ v_{i,j,0} \coloneqq \sum_{k\in \mc N}\frac{\mem_{i,j}(S_k) \widetilde F_{k}g_k}{\lam + \gamma_k} .\]  
Therefore, by \eqref{nsap:0}, 
\[ \rlac g \coloneqq \grae \rlace g = \isuma \frac {\widetilde F_i g_i}{\lam +\gamma_i} 1_{S_i}  + \isuma \sum_{j=1}^{\ka (i)} u_{i,j,0} \frac{\vr^{i,j}\gamma_i}{\lam +\gamma_i}1_{S_i}. \]

\textbf{Step 2. }\textbf {Identification of $\rlac $.}
The formula obtained above implies that $\rlac g$ is constant on each $S_i,i\in \mc N$ separately, and its value at each $s\in S_i$ is 
\begin{equation} f(i) \coloneqq \frac {\widetilde F_i g_i}{\lam +\gamma_i} + \sum_{j=1}^{\ka (i)} u_{i,j,0} \frac{\vr^{i,j} \gamma_i}{\lam + \gamma_i} \label{nsap:3}. \end{equation}
Using this relation, definition \eqref{mequ} and assumption \eqref{nsap:-1},
we calculate
\begin{align*}
\sum_{k\not = i} q_{i,k} f(k) &= \gamma_i \sum_{k \in \mc N} \frac {\widetilde F_k g_k}{\lam +\gamma_k} \sum_{l=1}^{\ka (i)} \vr^{i,l}p_{i,l} (S_k)\\ &\phantom{=} 
+\gamma_i \sum_{l=1}^{\ka (i)} \vr^{i,l} \left [ 
\sum_{k\in \mc N} \sum_{j=1}^{\ka (k)} u_{k,j} \frac{\vr^{k,j} \gamma_k p_{i,l}(S_k)}{\lam + \gamma_k} \right ],  \end{align*} 
and the first term here is $\gamma_i \sum_{l=1}^{\ka (i)} \vr^{i,l} v_{i,l}$. Since $(u_{i,j,0})_{(i,j)\in \mc {IJ}}$ is a unique solution to the equation $ (u_{i,j,0})_{(i,j)\in \mc {IJ}} = (v_{i,j,0})_{(i,j)\in \mc {IJ}} + N_{\lam,0} (u_{i,j,0})_{(i,j)\in \mc {IJ}}$, the expression in brackets equals $u_{i,l} - v_{i,l}$. Thus, the entire sum reduces to 
$\gamma_i \sum_{l=1}^{\ka (i)} \vr^{i,l} u_{i,l}$. This in turn, by \eqref{nsap:3}, is $(\lam+\gamma_i) f(i) - \widetilde F_i g_i$. This means that 
\[ (\lam - Q) (f(i))_{i \in \mc N} = (\widetilde F_i g_i)_{i\in \mc N}.\]
By identifying $Q$ with an operator in $C(\suu)$ we obtain thus
\[ \rlac g = \rez{Q}Pg .\]

\textbf{Step 3. }\textbf {Convergence of $\rez{\eps \macoe}$, as $\eps \to 0$.} For each $i\in \mc N$, let $A_{\mathsf 0,i}$ be the generator of the honest Feller process of point (d) in Definition \ref{asik}.  By assumption, 
\begin{align}\grae \eps^{-1} R_{\frac \lam \eps, \eps}^\du f &= \grae \isuma \rez{\eps A_{\eps \mathsf v_i}} g_i = \isuma \rez{\eps A_{\mathsf 0,i}} g_i \nonumber \\ &=\rez{B}g, \label{nsap:4} \end{align}
where $B$ is the generator of disjoint union of the processes generated by $A_{\mathsf 0,i}$ in $\cosi, i \in \mc N.$ Again by assumption,
\[ \grat \e^{tB} f = Pf, \qquad f \in \cosu .\]
Therefore, by Kurt'z Theorem, to complete the proof we need to show that $\rez{\eps\macoe} g = \eps^{-1} \left (\frac \lam \eps - \macoe \right )^{-1}g = \eps^{-1} R_{\frac \lam \eps, \eps}^\co g $ 
converges to $\rez{B}g$ as $\eps \to 0$. 

By \eqref{nsap:0}, 
\[ \eps^{-1} R_{\frac \lam \eps, \eps}^\co = \eps^{-1} R_{\frac \lam \eps, \eps}^\du + \isuma \eps^{-1} \sum_{j=1}^{\ka (i)} u_{i,j,\eps} (f, \eps^{-1} \lam ) \ell_{\frac \lam \eps, \eps}^{i,j},\]
where 
\[ \left | \eps^{-1}  u_{i,j,\eps} (f,\eps^{-1} \lam)\right | =   \left | \eps^{-1} \int_S R_{\frac \lam \eps, \eps}^\co f \ud \mem_{i,j} \right |\le \eps^{-1} \| R_{\frac \lam \eps, \eps}^\co f\| \le \lam^{-1} \|f\|. \]
Hence, by \eqref{nsap:4}, we are left with showing that 
\[ \grae \ell_{\frac \lam \eps, \eps} ^{i,j} =0, \qquad j=1,\dots, \ka (i), i\in \mc N, \lam >0.\]
However, 
\[ \ell_{\frac \lam \eps, \eps} ^{i,j} = \nad_\eps^{i,j} - \frac \lam \eps \left (\frac \lam \eps - \frac 1\eps A_{
\eps \mathsf v_i}\right )^{-1} \nad^{i,j}_\eps  =  \nad_\eps^{i,j} - \lam  \left (\lam  -  A_{
\eps \mathsf v_i}\right )^{-1} \nad^{i,j}_\eps   \]
and this converges, by assumption, to $\vr^{i,j}1_{S_i} - \lam \rez{A_{\mathsf, i}}\vr^{i,j} 1_{S_i}=0$, because the processes generated by $A_{\mathsf 0,i}$s are honest. 
\end{proof} 

\section{Appendix}\label{app}

 \subsection{$\lam \mapsto \frac {\cosh \sdlam (r-x)}{\cosh \sdlam r}$  is the Laplace transform of the distribution of the time needed for the Brownian motion starting at $x\in (0,r)$ and reflected at $x=r$ to reach $x=0$ for the first time}\label{ajeden} 
Formula (8.29) p. 100 in \cite{karatzas} says that the Laplace transform of the distribution of the time needed for the Brownian motion starting at an $x>0 $ to reach $0$ or $a>x$ is 
\( \frac {\cosh \sdlam (x-\tfrac a2)}{\cosh \sdlam  \tfrac a2}. \)
On the other hand, point $0$ is reached by the Brownian motion starting at $x$ and reflected at $r$ iff $0$ or $2r$ is reached by the ordinary (not reflected) Brownian motion starting at $x$. Hence, the Laplace transform we are searching for is obtained by replacing $a$ by $2r$ in the formula above.

\subsection{$k_\lam \coloneqq \frac {c \cosh \sdlam r} {c \cosh \sdlam r + \sdlam \sinh \sdlam r}$ is completely monotone}\label{adwa}  
The proof is based on the following expansion of $ \cosh z +  az  \sinh z $, where $a\not= 0$ is a  constant, into an infinite product (take e.g. $d=0$  in formula 13. p. 263 in \cite{goodson}) which can be proved using the Hadamard Factorization Theorem \cite{titchmarsh}. Let $p_n = p_n(a), n \ge 0$ be all positive solutions to the equation $\tan p = \frac 1{ap}$ (arranged in the increasing order). Then
\[ \cosh z + a z \sinh z  = \prod_{n=0}^\infty  \left (1+ \frac {z^2}{p_n^2}\right ), \qquad z \in \mathbb C.  \]
For $z= \sdlam r$ and $a=\frac 1{rc}$, this yields
\[  \frac {c} {c \cosh \sdlam r + \sdlam \sinh \sdlam r} = \prod_{n=0}^\infty \frac {p_n^2}{2r^2\lam +p_n^2} , \qquad \lam >0,\]
where $p_n=p_n(\frac 1{rc}).$ Analogously (see \cite{goodson} p. 263 formula 1.),
\[ \cosh z = \prod_{n=0}^\infty  \left (1+ \frac {z^2}{q_n^2}\right ), \qquad z \in \mathbb C, \]
where $q_n = \tfrac \pi 2 + n\pi .$ Therefore, 
\[ k_\lam = \prod_{n=0}^\infty \left ( \frac {p_n^2}{q_n^2} \frac {q_n^2 + 2\lam r^2}{p_n^2 + 2\lam r^2}\right ) = \prod_{n=0}^\infty  \left [ \frac {p_n^2}{q_n^2} \left (1 + \frac {q_n^2 - p^2_n}{p_n^2 + 2\lam r^2} \right )\right ].\]
Since, by definition, $p_n< q_n$, it follows that each factor in this expansion is completely monotone. 
Recalling that finite products of completely monotone functions are completely monotone, we see that for each $N\in \N$ the product of $N$ first factors of $k_\lam $ is completely monotone. 
Therefore, by the extended continuity
theorem for the Laplace transform (see \cite{feller}*{p. 433}) combined with the Bernstein Theorem, so is $k_\lam$.

\subsection{Formulae for determinants of Section \ref{exa2} and their limit forms}\label{aff} We note the following more explicit formulae for  the main determinant $\wyr$ and determinants $\wyr_1$ and $\wyr_2$ of \eqref{exa2:6}:
\begin{align*}
{\textstyle \frac 12}\wyr &= -[(2\lam q +d) (2\lam p +c) + 2\lam (1-p)(1-q)] \sinh\slam r \\
&\phantom{=} + \slam [(1-p)(2\lam q + d)+ (1-q)(2\lam p +c) ]\cosh \slam r, \\
{\textstyle \frac 12}\wyr_1&= -(2\lam q+d) \sinh \slam (r-x) - \slam (1-q) \cosh \slam (r-x),\\
{\textstyle \frac 12}\wyr_2&= -(2\lam p+c)\sinh\slam x - \slam (1-p) \cosh \slam x, \quad x\in [0,r].
\end{align*} 
As an immediate result, 
 \begin{align} - \lilz \frac \wyr \slam &= 2cdr + 2(1-q) c + 2 (1-p) d , \nonumber \\ 
-\lilz \frac {\wyr_1}{\slam} &= 2d(r-x) + 2(1-q), \nonumber \\
-\lilz \frac {\wyr_2}{\slam} &= 2cx + 2(1-p).\label{aff:1}\end{align}
Moreover, if all occurrences of $\lam, c$ and $d$ in these determinants are replaced by $\eps \lam, \eps c$ and $\eps d$, respectively, 
\begin{align}
\grae \frac{\wyr}{\eps^{\frac 32}} &= - 2 \slam (2\lam M  + c(1-q) + d(1-p)), \nonumber \\
\grae \frac{\wyr_1}{\eps^{\frac 12}} &= - 2 \slam (1-q), \nonumber \\
\grae \frac{\wyr_2}{\eps^{\frac 12}} &= - 2 \slam (1-p),\label{aff:2} \end{align}
where $M $ is defined in \eqref{eoas:1}.

Similarly, 
\[ {\textstyle \frac 12} (a_{2,1} +a_{2,2}) = (2\lam+d)\cosh \slam r + 2\slam (1-q) \sinh \slam r.\] 
It follows that, if $d=0$,
\begin{equation}\label{aff:3} \lilz \frac {a_{2,1} +a_{2,2}}{4\lam} = q +(1-q)r.\end{equation}
Moreover, if $\lam$ and $q$ are replaced by $\eps \lam $ and $\eps q$, respectively, (but $d$ needs not be zero) 
\begin{equation} \label{aff:4} \lilz \frac {a_{2,1} +a_{2,2}}{\eps } = 2\lam q +d +2\lam (1-q)r.\end{equation}
\subsection{Proof of \eqref{exa3:6a}}\label{puff} 
We have
\[ (\ell_\lam^j)'_k (0) = \frac {2d\slam}{a_{2,1}-a_{2,2}} \delta_{j,k} - \frac {4d\lam (a_{2,1} + a_{2,2})}{m_\lam (a_{2,1}-a_{2,2})^2}\alpha_j, \quad j,k\in \kad. \]
Therefore, 
\begin{align*}
\sum_{k\in \kad}\alpha  (\ell_\lam^j)'_k (0) &= \frac{2d\slam}{a_{2,1} - a_{2,2}}\alpha_j - (1-\beta) \frac {4d\lam (a_{2,1} + a_{2,2})}{m_\lam (a_{2,1}-a_{2,2})^2}\alpha_j \\
&= \frac{4d\lam \beta\slam}{m_\lam(a_{2,1}-a_{2,2})}. \end{align*}
Since, on the other hand,
\( \beta f''(0)=2\lam \beta f(0) =  \frac{4d\lam \beta \slam}{m_\lam(a_{2,1}-a_{2,2})},\)
 we are done. 
  
\vspace{0.2cm}
\textbf {Acknowledgment.}  This research is supported by National Science Center (Poland) grant
2017/25/B/ST1/01804.

I would like to thank K. Bogdan for several many-hour-long discussions on the subject of the paper, and for numerous bibliographical items. Moreover, I would like to acknowledge the impact the paper has gained from E.~Ratajczyk, A. Gregosiewicz and Ł. Stepie\'n. In particular, the reasoning of Section \ref{adwa} I owe to E.R. and A.G., and I am grateful to E.R. for correcting my faulty calculations in the original version of Example \ref{ex:5}.


\bibliographystyle{plain}
\bibliography{bibliografia}

\end{document}